\DeclareMathOperator{\WRT}{WRT}
\DeclareMathOperator{\diag}{diag}
\numberwithin{equation}{section} % Setting of equation numbers 
\begin{document}
% --------------------------------------------------------------------------

%\title{Witten--Reshetikhin--Turaev invariants and homological blocks for plumbed homology spheres}
%\author{Yuya Murakami}

\title[WRT invariants and homological blocks for plumbed homology spheres]{Witten--Reshetikhin--Turaev invariants and homological blocks for plumbed homology spheres}
\author[Y. Murakami]{Yuya Murakami}
\address{Faculty of Mathematics, Kyushu University, 744, Motooka, Nishi-ku, Fukuoka, 819-0395, Japan}
\email{murakami.yuya.896@m.kyushu-u.ac.jp}

%\thanks{The author is supported by JSPS KAKENHI Grant Number JP 20J20308.	}

\date{\today}

\maketitle

% --------------------------------------------------------------------------

\begin{abstract}
	In this paper, we prove a conjecture by Gukov--Pei--Putrov--Vafa for a wide class of plumbed $ 3 $-manifolds.
	Their conjecture states that Witten--Reshetikhin--Turaev (WRT) invariants are radial limits of homological blocks, which are $ q $-series introduced by them for plumbed $ 3 $-manifolds with negative definite linking matrices.
	The most difficult point in our proof is to prove the vanishing of weighted Gauss sums that appear in coefficients of negative degree in asymptotic expansions of homological blocks.
	To deal with it, we develop a new technique for asymptotic expansions, which enables us to compare asymptotic expansions of rational functions and false theta functions related to WRT invariants and homological blocks, respectively.
	In our technique, our vanishing results follow from holomorphy of such rational functions.
\end{abstract}

% --------------------------------------------------------------------------

\tableofcontents

% --------------------------------------------------------------------------

\section{Introduction} \label{sec:intro}

% --------------------------------------------------------------------------

In this paper, we study Witten--Reshetikhin--Turaev (WRT) invariants.
They are quantum invariants of $ 3 $-manifolds constructed by Witten~\cite{Witten} from a physical viewpoint and constructed by Reshetikhin--Turaev~\cite[Theorem 3.3.2]{Reshetikhin-Turaev} from a mathematical viewpoint.
The Witten's asymptotic expansion conjecture plays a central role in studying WRT invariants.
It claims that the Chern-Simons invariants and the Reidemeister torsions appear in the asymptotic expansion of WRT invariants.
This conjecture is proved by Lawrence--Zagier~\cite{LZ} for the Poincar\'{e} homology sphere, Hikami~\cite{H_Bries} for Brieskorn homology spheres, Hikami~\cite{H_Seifert} and Matsusaka--Terashima~\cite{Matsusaka-Terashima} independently for Seifert homology spheres.
Many other authors have studied this conjecture~\cite{Andersen-Himpel,Andersen-Mistegard,Andersen,Beasley-Witten,Charles,Chun,Charles-Marche_II,FIMT,Freed-Gompf,GMP,H_Lattice,H_Lattice2,Hansen-Takata,Jeffrey,Rozansky1,Rozansky2,Wu}.

Recently, Gukov--Pei--Putrov--Vafa~\cite{GPPV} introduced $ q $-series invariants called homological blocks or GPPV invariants for plumbed manifolds.
They conjectured that homological blocks have good modular transformations and their radial limits yield WRT invariants.
These conjectures show a roadmap to prove the Witten's asymptotic expansion conjecture.
The first conjecture, modular transformations of homological blocks, are related to quantum modular forms introduced by Zagier~\cite{Zagier_quantum}.
This conjecture is proved by Matsusaka--Terashima~\cite{Matsusaka-Terashima} for Seifert homology spheres and Bringmann--Mahlburg--Milas~\cite{BMM_high_depth} for non-Seifert homology spheres whose surgery diagrams are the H-graphs.
Bringmann--Nazaroglu~\cite{Bringmann-Nazaroglu} and Bringmann--Kaszian--Milas--Nazaroglu~\cite{BKMN_False_modular} clarified and proved the modular transformation formulas of false theta functions, which are main tools to study modular transformations of homological blocks.

The second conjecture, which asks the relation between WRT invariants and radial limits of homological blocks, is proved by Fuji--Iwaki--Murakami--Terashima~\cite{FIMT} with a result in Andersen--Misteg{\aa}rd~\cite{Andersen-Mistegard} for Seifert homology spheres and Mori--Murakami~\cite{MM} for non-Seifert homology spheres whose surgery diagrams are the H-graphs.
In this paper, we prove this conjecture for a large class of plumbed manifolds by developing some new techniques.

Let us explain our setting.
Let $ \Gamma = (V, E, (w_v)_{v \in V}) $ be a plumbing graph, that is, a finite tree with the vertex set $ V $, the edge set $ E $, and integral weights $ w_v \in \Z $ for each vertex $ v \in V $.
Following \cite{GPPV}, we assume that the linking matrix $ W $ of $ \Gamma $ is negative definite.
Then one can define the homological block $ \widehat{Z}_\Gamma (q) $ with $ q \in \bbC $ and $ \abs{q} < 1 $. 
Let $ M(\Gamma) $ be the plumed 3-manifold obtained from $ S^3 $ through the surgery along the diagram defined by $ \Gamma $.
We also assume that $ M(\Gamma) $ is an integral homology sphere.
This condition is equivalent to $ \det W = \pm 1 $ since $ H_1(M(\Gamma), \Z) \cong \Z^V / W(\Z^V)$ by use of the Mayer--Vietoris sequence. 
For a positive integer $ k $, let $ \WRT_k(M(\Gamma)) $ be the WRT invariant of $ M(\Gamma) $ normalised as $ \WRT_k(S^3) = 1 $. 
We also denote $ \zeta_k \coloneqq e^{2\pi\iu/k} $.

In the above setting, we can state the conjecture of Gukov--Pei--Putrov--Vafa~\cite{GPPV} by the following.

\begin{conj}[{\cite[Conjecture 2.1, Equation (A.28)]{GPPV}}] \label{conj:GPPV}
	\[
	\WRT_k(M(\Gamma))
	= \frac{1}{2(\zeta_{2k} - \zeta_{2k}^{-1})}
	\lim_{q \to \zeta_k} \widehat{Z}_\Gamma \left( q \right).
	\]
\end{conj}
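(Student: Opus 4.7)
The plan is to compare the asymptotic expansion of $\widehat{Z}_\Gamma(q)$ as $q$ tends radially to $\zeta_k$ with the standard finite-sum expression for $\WRT_k(M(\Gamma))$. Setting $q = \zeta_k e^{-\hbar}$ with $\hbar > 0$, the goal is to produce a full asymptotic expansion
\[
\widehat{Z}_\Gamma(\zeta_k e^{-\hbar}) \sim \sum_{n \geq -N} c_n(k)\, \hbar^n \qquad (\hbar \to 0^+),
\]
and to show both that the principal part vanishes, i.e.\ $c_n(k) = 0$ for $n < 0$, and that the constant term $c_0(k)$ equals $2(\zeta_{2k} - \zeta_{2k}^{-1})\, \WRT_k(M(\Gamma))$.

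The first step is to rewrite $\widehat{Z}_\Gamma(q)$, using the GPPV residue/contour representation for plumbed negative-definite graphs, as a constant-term extraction (in suitable auxiliary variables) from a product of theta-like series indexed by the vertices of $\Gamma$; at vertices of valence $1$ or $2$ these factors produce genuine theta functions, while at higher-valence vertices they produce the false theta series responsible for the eventual principal part. In parallel, using the Reshetikhin--Turaev surgery formula, I would express $\WRT_k(M(\Gamma))$ as a finite Gauss-type sum over $(\Z/2k\Z)^V$ whose summand is a product of $\sin(\pi n_v/k)$-type factors attached to the vertices together with a quadratic Gauss phase determined by $W$.

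The technical heart of the proof, as flagged in the abstract, is the vanishing of the negative-degree coefficients $c_n(k)$. These coefficients have the shape of \emph{weighted Gauss sums} and their vanishing is not evident from any direct identity. My strategy follows the hint in the abstract: to each such coefficient I would associate an auxiliary rational function $R_n(x)$ whose Laurent expansion at the appropriate root of unity reproduces the given weighted Gauss sum, but which, via the WRT side of the comparison, admits a closed-form expression that is manifestly holomorphic at $x = \zeta_k$. Holomorphy then forces the relevant Laurent coefficients of $R_n(x)$ to vanish, which by construction yields $c_n(k) = 0$. The underlying mechanism is that each false theta factor in $\widehat{Z}_\Gamma$ differs from a rational generating function for a Gauss sum by a remainder whose asymptotic contribution precisely cancels the rational-function principal part, reducing one side's expansion to the other's.

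The main obstacle will be making this comparison work for an arbitrary plumbing tree rather than only stars (the Seifert case of \cite{FIMT}) or $H$-graphs (\cite{MM}). In those cases the auxiliary rational function depends on one or two variables and its holomorphy is transparent, but for a general tree the GPPV contour integral is multivariate and the poles of the candidate rational functions interact across the branching structure of $\Gamma$. I would address this by induction on the tree, peeling off leaves or collapsing edges and producing at each stage a local rational factor whose holomorphy encodes the vanishing of one layer of $c_n(k)$. Once the principal part is disposed of, identifying $c_0(k)$ with the WRT Gauss sum up to the prefactor $2(\zeta_{2k} - \zeta_{2k}^{-1})$ becomes a bookkeeping matter, since both expressions will then be built from the same Gauss-sum data.
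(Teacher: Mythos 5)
Your overall strategy is the paper's: expand $\widehat{Z}_\Gamma(\zeta_k e^{-\hbar})$ asymptotically, show the negative-degree coefficients (weighted Gauss sums) vanish by identifying them with Laurent coefficients of a rational function that is holomorphic at the root of unity, and match the constant term with the Reshetikhin--Turaev Gauss sum. But there is a gap that you cannot avoid on this route: the method does not prove the conjecture unconditionally. The rational function in question is built from the local factors $G_v(q)=(q^{M_v}-q^{-M_v})^{2-\deg(v)}\prod_{i\in\overline{v}}(q^{M_v/w_i}-q^{-M_v/w_i})$, and \cref{lem:G_v_at_1} shows its order of vanishing at $q=1$ is exactly $\abs{\overline{v}}+2-\deg(v)$. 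When some vertex has $\abs{\overline{v}}+2-\deg(v)<0$ the rational function $F(f_1;t)$ of \cref{lem:F(f_1;t)} genuinely has a pole at $t_v=0$, holomorphy fails, and the vanishing of the principal part is no longer forced. This is precisely why the paper only obtains \cref{thm:main} under the hypothesis $\abs{\overline{v}}+2-\deg(v)>0$. Your proposal never locates this obstruction and implicitly claims the full conjecture, which this argument does not deliver.

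The second weak point is the mechanism by which holomorphy on the rational side transfers to the false-theta side, and how the multivariate structure is controlled. The paper's device is a family $F(f;t)$ indexed by a test function $f$, with a multivariate Euler--Maclaurin expansion (\cref{prop:Euler--Maclaurin_poly}, \cref{cor:asymp_F(f; t)}) whose coefficients factor as $c_n f^{(n)}(0)$ with $c_n$ \emph{independent of} $f$; holomorphy of the specialization $F(f_1;t)$ (with $f_1=\exp(-\sum_v x_v)$, a rational function by \cref{lem:F(f_1;t)}) forces $c_n=0$ for every $n$ with a negative entry, and the same $c_n$ govern the false-theta specialization $F(f_2;t)$ whose limit is $\lim_{q\to\zeta_k}\widehat{Z}_\Gamma(q)$. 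Your description of the mechanism as a ``remainder whose asymptotic contribution precisely cancels the rational-function principal part'' is not what happens --- nothing cancels; one proves an identity of expansion coefficients between two specializations of one family. Moreover, your proposed induction by ``peeling off leaves or collapsing edges'' is not how the multivariate interaction is tamed: the paper integrates out \emph{all} degree-one vertices at once by reciprocity of Gauss sums (\cref{prop:reciprocity}, \cref{prop:WRT_expression}), and then bounds the order at each $t_v=0$ separately by splitting off the $\mu_v\in k\Z$ terms and showing the inner Gauss sum is independent of the shift $\alpha_v$ (\cref{lem:Gauss_sum_indep}), so that the entire $t_v$-dependence of the singular part is carried by the single factor $G_v(e^{-t_v/2M_v})$. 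Without an ingredient of that kind, your leaf-peeling induction has no mechanism to decouple the poles at different vertices, and the per-vertex vanishing does not follow.
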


In this paper, we prove this conjecture for a wide class of plumbing graphs.
%Our main result shows that this conjecture is true for some cases.

\begin{thm} \label{thm:main}
	\cref{conj:GPPV} is true for plumbing graphs depicted in \cref{fig:main}.
\end{thm}

\begin{figure}	
	\centering
	\begin{tikzpicture}
		\node[shape=circle,fill=black, scale = 0.4] (1) at (0,0) { };
		\node[shape=circle,fill=black, scale = 0.4] (2) at (1.5,0) { };
		\node[shape=circle,fill=black, scale = 0.4] (3) at (-1,1) { };
		\node[shape=circle,fill=black, scale = 0.4] (4) at (-1,0.5) { };
		\node[shape=circle,fill=black, scale = 0.4] (5) at (-1,-1) { };
		\node[shape=circle,fill=black, scale = 0.4] (6) at (2.5,1) { };
		\node[shape=circle,fill=black, scale = 0.4] (7) at (2.5,0.5) { };
		\node[shape=circle,fill=black, scale = 0.4] (8) at (2.5,-1) { };
		
		\node[draw=none] (B1) at (0,0.4) {$ w_1 $};
		\node[draw=none] (B2) at (1.5, 0.4) {$ w_2 $};
		\node[draw=none] (B3) at (-1.4,1) {$ w_3 $};
		\node[draw=none] (B4) at (-1.4,0.5) {$ w_4 $};
		\node[draw=none] (B5) at (-1.4,-1) {$ w_{N'} $};		
		\node[draw=none] (B6) at (3.2,1) {$ w_{N'+1} $};	
		\node[draw=none] (B7) at (3.2,0.5) {$ w_{N'+2} $};		
		\node[draw=none] (B8) at (2.9,-1) {$ w_N $};	
		\node[draw=none,rotate=270] (B9) at (-1.4,-0.2) {$\cdots$};		
		\node[draw=none,rotate=270] (B10) at (2.9,-0.2) {$\ldotp\ldotp\ldotp\ldotp$};
		
		\path [-](1) edge node[left] {} (2);
		\path [-](1) edge node[left] {} (3);
		\path [-](1) edge node[left] {} (4);
		\path [-](1) edge node[left] {} (5);
		\path [-](2) edge node[left] {} (6);
		\path [-](2) edge node[left] {} (7);
		\path [-](2) edge node[left] {} (8);		
	\end{tikzpicture}
	\caption{} \label{fig:main}
\end{figure}

Here we remark that a plumbing graph has the form depicted in \cref{fig:main} if and only if $ \abs{\overline{v}} + 2 - \deg(v) > 0 $ for any vertex $ v \in V $, where $ \deg(v) $ is the degree of $ v $ and
$ \overline{v} \coloneqq \{ i \in V \mid \{i, v \} \in E, \, \deg(i) = 1 \} $.

In a proof of \cref{thm:main}, the most difficult point is to prove the vanishing of weighted Gauss sums that appear in coefficients of negative degree in asymptotic expansions of homological blocks.
The previous works~\cite{BMM_high_depth,MM} deal with this difficulty by direct calculations (\cite[Theorem 4.1]{BMM_high_depth},\cite[Proposition 4.2]{MM}).
However, we prove it indirectly by using our asymptotic formula (\cref{prop:Euler--Maclaurin_poly} and \cref{cor:asymp_F(f; t)}) and holomorphy of a rational function whose radial limits are WRT invariants.
In a sense, our technique is a generalisation of the method of Lawrence--Zagier~\cite[pp.98, Proposition]{LZ} using $ L $-funcitons for periodic maps.

This paper will be organised as follows. 
In \cref{sec:fund_data}, we prepare some notations for plumbing graphs  $ \Gamma $ which we use throughout this paper.
In \cref{sec:WRT}, we calculate WRT invariants for plumbed homology spheres $ M(\Gamma) $.
The point of our calculation is to represent WRT invariants as a sum for the submatrix of $ W^{-1} $ with vertices whose degrees are greater than $ 2 $.
In \cref{sec:HB_false}, we express homological blocks as false theta functions.
Then we can study the asymptotic expansions of homological blocks as $ q \to \zeta_k $.
To calculate asymptotic expansions, we develop a formula in \cref{subsec:Euler--Maclaurin} by the Euler--Maclaurin summation formula based on Zagier~\cite[Equation (44)]{Zagier_asymptotic}.
%In \cref{subsec:asymp_false}, we rewrite this formula as an appropriate form for false theta functions.
In \cref{subsec:asymp_Gauss}, we develop the important asymptotic formula, which plays a central role in proving our main theorem.
This formula asserts that the same factors appear in the asymptotic expansions of homological blocks and rational functions whose radial limits are WRT invariants.
Finally, we prove our main theorem in \cref{sec:proof_main}.

% --------------------------------------------------------------------------

\section*{Acknowledgement} \label{sec:acknowledgement}

% --------------------------------------------------------------------------

The author would like to show the greatest appreciation to Takuya Yamauchi for giving many pieces of advice. 
The author would like to thank Takuya Yamauchi, Yuji Terashima, Kazuhiro Hikami, Toshiki Matsusaka for giving many comments. 
The author is deeply grateful to Akihito Mori for a lot of discussion. 
I thank the referees for their helpful suggestions and comments which improved the presentation of our paper.
The author is supported by JSPS KAKENHI Grant Number JP 20J20308 and JP23KJ1675.

% --------------------------------------------------------------------------

\section{Basic notations for plumbing graphs} \label{sec:fund_data}

% --------------------------------------------------------------------------

In this section, we list some notations for plumbing graphs and their basic properties, which we use throughout this paper.

% --------------------------------------------------------------------------

\subsection{Notations for graphs} \label{subsec:graph}

% --------------------------------------------------------------------------

In this subsection, we prepare settings for graphs.
As in \cref{sec:intro}, let $ \Gamma = (V, E, (w_v)_{v \in V}) $ be a plumbing graph and $ W $ be its linking matrix such that it is negative definite and $ \det W = \pm 1 $.
Here we consider the edge set $ E $ as the subset of $ \{ \{ v, v' \} \mid v, v' \in V \} $.
We identify $ M_{\abs{V}}(\Z) $ and $ \End(\Z^V) $ and consider $ W $ as the element of $ \End(\Z^V) $.
We remark that $ w_v \in \Z_{<0} $ for any vertex $ v \in V $.

For two plumbing graphs $ \Gamma $ and $ \Gamma' $, Neumann (\cite[Proposition 2.2]{Neumann_Lecture}, \cite[Theorem 3.1]{Neumann_work}) proved that two $ 3 $-manifolds $ M(\Gamma) $ and $ M(\Gamma') $ are homeomorphic if and only if $ \Gamma $ and $ \Gamma' $ are related by Neumann moves shown in \cref{fig:Neumann}.
Thus, we can assume $ w_i \le -2 $ for a vertex $ i $ with $ \deg(i) = 1 $. 

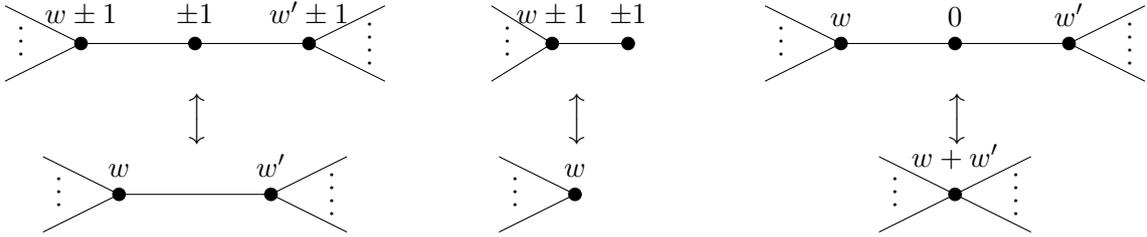
\begin{figure}[htp]
	\centering
	\begin{tikzpicture}
		%Move I
		\draw[fill]
		%隣接3頂点
		(-1.5,0) node[above=0.1cm]{$w \pm 1$} circle(0.5ex)--
		(0,0) node[above=0.1cm]{$\pm 1$} circle(0.5ex)--
		(1.5,0) node[above=0.1cm]{$w' \pm 1$} circle(0.5ex)
		%左頂点から伸びる辺
		(-2.5,0.5) node[above]{}--(-1.5,0) node[above]{}
		(-2.3,0) node[rotate=270]{$\cdots$}
		(-2.5,-0.5) node[above]{}--(-1.5,-0) node[above]{}
		%右頂点から伸びる辺
		(1.5,0) node[above]{}--(2.5,0.5) node[above]{}
		(2.3,0) node[rotate=270]{$\ldotp\ldotp\ldotp\ldotp$}
		(1.5,0) node[above]{}--(2.5,-0.5) node[above]{}
		%矢印
		(0,-1) node[rotate=270]{$\longleftrightarrow$}
		%隣接2頂点
		(-1,-2) node[above=0.1cm]{$ w $} circle(0.5ex)--
		(1,-2) node[above=0.1cm]{$ w' $} circle(0.5ex)
		%左頂点から伸びる辺
		(-2,-1.5) node[above]{}--(-1,-2) node[above]{}
		(-1.8,-2) node[rotate=270]{$\cdots$}
		(-2,-2.5) node[above]{}--(-1,-2) node[above]{}
		%右頂点から伸びる辺
		(1,-2) node[above]{}--(2,-1.5) node[above]{}
		(1.8,-2) node[rotate=270]{$\ldotp\ldotp\ldotp\ldotp$}
		(1,-2) node[above]{}--(2,-2.5) node[above]{};
		%Move II
		\draw[fill]
		%隣接2頂点
		(4.7,0) node[above=0.1cm]{$w \pm 1$} circle(0.5ex)--
		(5.7,0) node[above=0.1cm]{$\pm 1$} circle(0.5ex)
		%左頂点から伸びる辺
		(3.9,0.5) node[above]{}--(4.7,0) node[above]{}
		(4.1,0) node[rotate=270]{$\cdots$}
		(3.9,-0.5) node[above]{}--(4.7,0) node[above]{}
		%矢印
		(5,-1) node[rotate=270]{$\longleftrightarrow$}
		%1頂点
		(5,-2) node[above=0.1cm]{$w$} circle(0.5ex)
		%左頂点から伸びる辺
		(4,-1.5) node[above]{}--(5,-2) node[above]{}
		(4.2,-2) node[rotate=270]{$\cdots$}
		(4,-2.5) node[above]{}--(5,-2) node[above]{};
		%Move III
		\draw[fill]
		%隣接2頂点
		(8.5,0) node[above=0.1cm]{$w$} circle(0.5ex)--
		(10,0) node[above=0.1cm]{$0$} circle(0.5ex)--
		(11.5,0) node[above=0.1cm]{$w'$} circle(0.5ex)
		%左頂点から伸びる辺
		(7.5,0.5) node[above]{}--(8.5,0) node[above]{}
		(7.7,0) node[rotate=270]{$\cdots$}
		(7.5,-0.5) node[above]{}--(8.5,0) node[above]{}
		%右頂点から伸びる辺
		(11.5,0) node[above]{}--(12.5,0.5) node[above]{}
		(12.3,0) node[rotate=270]{$\ldotp\ldotp\ldotp\ldotp$}
		(11.5,0) node[above]{}--(12.5,-0.5) node[above]{}
		%矢印
		(10,-1) node[rotate=270]{$\longleftrightarrow$}
		%1頂点
		(10,-2) node[above=0.2cm]{$w + w'$} circle(0.5ex)
		%左頂点から伸びる辺
		(9,-1.5) node[above]{}--(10,-2) node[above]{}
		(9.2,-2) node[rotate=270]{$\cdots$}
		(9,-2.5) node[above]{}--(10,-2) node[above]{}
		%右頂点から伸びる辺
		(10,-2) node[above]{}--(11,-1.5) node[above]{}
		(10.8,-2) node[rotate=270]{$\ldotp\ldotp\ldotp\ldotp$}
		(10,-2) node[above]{}--(11,-2.5) node[above]{};
	\end{tikzpicture}
	\caption{Neumann moves}
	\label{fig:Neumann}
\end{figure}

For a positive integer $ d $, let
\[
V_d \coloneqq \{ v \in V \mid \deg(v) = d \}, \quad
V_{\ge d} \coloneqq \{ v \in V \mid \deg(v) \ge d \}.
\]
Let $ W_d \in \End(\Z^{V_d}) $ and $ W_{\ge d} \in \End(\Z^{V_{\ge d}}) $ be the submatrices of $ W $ with $ V_d \times V_d $ and $ V_{\ge d} \times V_{\ge d} $ components respectively.
Moreover, for a positive integer $ e $, let $ W_{d, \ge e} \in \Hom(\Z^{V_{\ge e}}, \Z^{V_d}) $ and $ W_{\ge d, e} \in  \Hom(\Z^{V_e}, \Z^{V_{\ge d}}) $ be the submatrices of $ W $ with $ V_d \times V_{\ge e} $ and $ V_{\ge d} \times V_e $ components respectively.

We need to focus on vertices with degree $ 1 $ to calculate WRT invariants and homological blocks.
For this reason, we define 
\[
\overline{v} \coloneqq \{ i \in V_1 \mid \{ i, v \} \in E \}, \quad
M_v \coloneqq \prod_{i \in \overline{v}} w_i
\]
for each vertex $ v \in V_{\ge 2} $.
Here we define $ M_v \coloneqq 1 $ if $ \overline{v} = \emptyset $.

The condition $ \det W = \pm 1 $ implies the following lemma, which Akihito Mori told the author.

\begin{lem} \label{lem:coprime}
	For a vertex $ v \in V_{\ge 2} = \{ v \in V \mid \deg v \ge 2 \} $ and distinct vertices $ i, j \in \overline{v} $, it holds that $ \gcd(w_i, w_j) = 1 $.
\end{lem}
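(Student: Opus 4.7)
The plan is to use the hypothesis $\det W = \pm 1$ in the most direct way possible: reduce the linking matrix modulo $d := \gcd(w_i, w_j)$ and exhibit a linear dependence among its rows, forcing $d \mid 1$.

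First I would write down explicitly the rows of $W$ indexed by the two leaves $i, j \in \overline{v}$. Since $i$ and $j$ are leaves whose unique neighbour is $v$ (and in particular $i$ and $j$ are not adjacent in the tree $\Gamma$), row $i$ of $W$ has entry $w_i$ in column $i$, entry $1$ in column $v$, and $0$ in every other column; similarly row $j$ has $w_j$ in column $j$, $1$ in column $v$, and $0$ elsewhere. This is the only structural fact we need, and it uses crucially that $\overline{v} \subset V_1$.

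Next, suppose for contradiction that $d := \gcd(w_i, w_j) > 1$. Reducing both rows modulo $d$ kills the diagonal entries $w_i$ and $w_j$, so the rows become identical modulo $d$: each has a single $1$ in column $v$ and zeros elsewhere. Consequently the matrix $W \bmod d$ has two equal rows, hence $\det(W) \equiv 0 \pmod{d}$. But we assumed $\det W = \pm 1$, so $d \mid 1$, a contradiction. Therefore $\gcd(w_i, w_j) = 1$.

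There is essentially no obstacle here; the only subtlety is to record the combinatorial input properly (that $i, j$ are leaves and distinct, so they are non-adjacent and each has exactly one neighbour, namely $v$), after which the argument is a one-line determinant computation modulo $d$. No use of negative definiteness of $W$ or of the Neumann-move normalisation is required for this particular lemma.
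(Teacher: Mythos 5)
Your proof is correct and is essentially the paper's own argument: the paper observes that $W e_i - W e_j = w_i e_i - w_j e_j$, so $\gcd(w_i, w_j)$ divides $\det W = \pm 1$, which is exactly your statement that the rows (columns) of $W$ indexed by the two leaves coincide modulo $\gcd(w_i, w_j)$ and hence force $\det W \equiv 0$ modulo that gcd.
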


\begin{proof}
	Let $ \{ e_{v} \}_{v \in V} $ be the standard basis of $ \R^V $.
	Since 
	\[
	W e_i - W e_j 
	=
	(e_{v} + w_i e_i) - (e_{v} + w_j e_j)
	=
	w_i e_i - w_j e_j,
	\]
	$ \gcd(w_i, w_j) $ divides $ \det W = \pm 1 $.
\end{proof}

% --------------------------------------------------------------------------

\subsection{The inverse matrix of the linking matrix} \label{subsec:S}

% --------------------------------------------------------------------------

In this subsection, we calculate the inverse matrix of the linking matrix.

Let $ S \in \Aut(\Q^{V_{\ge 2}}) \cap \End(\Z^{V_{\ge 2}}) $ be the $ V_{\ge 2} \times V_{\ge 2} $ submatrix of $ -W^{-1} \in \Aut(\Q^V) $.
Here we recall $ V_{\ge 2} = \{ v \in V \mid \deg v \ge 2 \} $.
We also denote $ T \coloneqq  -W_1^{-1} W_{1, \ge 2} $. 
Here we remark that $ S $ is positive definite since $ W $ is negative definite.

The inverse matrices $ W^{-1} $ and $ S^{-1} $ have the following properties.

\begin{prop} \label{prop:S}
	\begin{enumerate}
		\item \label{item:prop:S1}		
		\[
		-W^{-1} = \pmat{T \\ I} S \pmat{{}^t\!T & I} - \pmat{W_1^{-1} & \\ & O}.
		\]	
		\item \label{item:prop:S2}
		\[
		\det S =  \det W \prod_{i \in V_1} w_i
		= \prod_{i \in V_1} \abs{w_i}.
		\]
		\item \label{item:prop:S3}
		\[
		S^{-1} = -W_{\ge 2} + \diag \left( \sum_{i \in \overline{v}} \frac{1}{w_i} \right)_{v \in V_{\ge 2}}.
		\]
		Here, we denote $ \diag(a_v)_{v \in V_{\ge 2}} $ the diagonal matrix whose $ (v, v) $-component is $ a_v $. 
		\item \label{item:prop:S4}
		For distinct vertices $ v, v' \in V_{\ge 2} $, the $ (v, v) $-component of $ S $ is in $ M_v \Z $ and the $ (v, v') $-component of $ S $ is in $ M_v M_{v'} \Z $. 
		\item \label{item:prop:S5}
		%		任意の部分集合$ V' \subset V_{\ge 2} $に対し
		\[	
		S^{-1} \left(\Z^{V_{\ge 2}} \right) = \bigoplus_{v \in V_{\ge 2}} \frac{1}{M_v} \Z \subset \Q^{V_{\ge 2}}.
		%		S^{-1} \left(\Z^{V'} \right) = \bigoplus_{v \in V'} \frac{1}{M_v} \Z \subset \Q^{V'}.
		\]		
	\end{enumerate}
\end{prop}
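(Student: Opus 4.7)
My plan is to derive all five items from the block decomposition of $W$ with respect to the partition $V = V_1 \sqcup V_{\ge 2}$, using the Schur complement formula together with the tree structure of $\Gamma$. I may assume $V_{\ge 2} \ne \emptyset$ throughout, since otherwise every claim about $S$ is vacuous; this forces $\abs{V} \ge 3$, so no two degree-one vertices of $\Gamma$ are adjacent, and in particular $W_1$ is diagonal with entries $w_i$. Writing $W$ in block form indexed by $V_1 \sqcup V_{\ge 2}$ and forming the Schur complement of $W_1$, the key computation is that $W_{\ge 2, 1} W_1^{-1} W_{1, \ge 2}$ has $(v, v')$-entry $\sum_{i \in V_1 :\, \{i,v\}, \{i,v'\} \in E} 1/w_i$; since every $i \in V_1$ has a unique neighbor, this entry vanishes for $v \ne v'$ and equals $\sum_{i \in \overline v} 1/w_i$ on the diagonal. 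This delivers (3) at once. The block inversion formula applied with this Schur complement yields (1), and taking determinants in $\det W = \det W_1 \cdot \det(-S^{-1})$, using that negative definiteness of $W$ together with $\det W = \pm 1$ forces $\det W = (-1)^{\abs{V}}$, gives (2) after a sign bookkeeping.

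For (4), I would argue by cofactor expansion. Because $\det W = \pm 1$, every entry of $W^{-1}$, and hence of $S$, is an integer. The diagonal entry $S_{vv}$ equals $\pm \det W_{\hat v}$; deleting $v \in V_{\ge 2}$ from $\Gamma$ disconnects it so that each $i \in \overline v$ becomes an isolated $1 \times 1$ block with entry $w_i$, so $\det W_{\hat v}$ factors as $M_v$ times the determinant of the remaining integer block. For $v \ne v'$ in $V_{\ge 2}$, the same idea applied to the minor obtained by removing row $v'$ and column $v$, combined with the disjointness $\overline v \cap \overline{v'} = \emptyset$ (each vertex of $V_1$ has a unique neighbor), shows that the contributions of $\overline v$ and $\overline{v'}$ remain independent $1 \times 1$ diagonal blocks in the minor and yields divisibility by $M_v M_{v'}$. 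This off-diagonal case is the main obstacle, because the minor is no longer symmetric and the combinatorial bookkeeping after simultaneously removing different rows and columns needs care.

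Finally, (5) will follow directly from (3) and (4). The inclusion $S^{-1}(\Z^{V_{\ge 2}}) \subset \bigoplus_v (1/M_v)\Z$ is read off from (3): the term $-W_{\ge 2}\,x$ is integral, while the diagonal correction $x_v \sum_{i \in \overline v} 1/w_i$ lies in $(1/M_v)\Z$. For the reverse inclusion, each generator $(1/M_v)\,e_v$ of the right-hand side equals $S^{-1}(S e_v / M_v)$, and by (4) the vector $S e_v / M_v$ has integer entries, so it lies in the image.
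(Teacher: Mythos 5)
Your argument is correct, and for items (1)--(3) it coincides with the paper's proof: both apply the block-inversion / Schur-complement lemma to the decomposition of $W$ along $V_1 \sqcup V_{\ge 2}$ and observe that $W_{\ge 2,1} W_1^{-1} W_{1,\ge 2}$ is diagonal because each degree-one vertex has a unique neighbour. (Your sign bookkeeping in (2) is in fact slightly more careful than the paper's: the intermediate identity should read $\det S = (-1)^{\abs{V_{\ge 2}}} \det W \prod_{i \in V_1} w_i$, and both routes land on $\prod_{i}\abs{w_i}$, the paper by invoking $\det S > 0$.) The genuine divergence is in item (4). The paper works entirely inside $S^{-1}$: by (3) the only non-integral entries of $S^{-1}$ are the diagonal ones, each in $\frac{1}{M_{v''}}\Z$, so the $(v,v')$-cofactor of $S^{-1}$ lies in $\bigl(\prod_{v'' \ne v, v'} 1/M_{v''}\bigr)\Z$, and multiplying by $\det S = \pm \prod_{v''} M_{v''}$ gives the divisibility. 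You instead expand cofactors of $W$ itself: deleting row $v'$ and column $v$ leaves each row indexed by $i \in \overline{v}$ (resp.\ each column indexed by $j \in \overline{v'}$) with the single nonzero entry $w_i$ (resp.\ $w_j$) on the diagonal, so Laplace expansion extracts the factor $M_v M_{v'}$, the disjointness $\overline{v} \cap \overline{v'} = \emptyset$ keeping the two expansions independent. Both are sound; the paper's version avoids the non-symmetric minor altogether, while yours makes (4) independent of (3) and ties the divisibility directly to the tree combinatorics. For (5) the difference is cosmetic: the paper shows $S^{-1}\diag(M_v)_{v}$ is an integer matrix of determinant $\pm 1$, whereas you verify the reverse inclusion generator by generator via $(1/M_v)e_v = S^{-1}(Se_v/M_v)$ using (4); either settles the equality of lattices.
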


To prove this proposition, we prepare a lemma for the inverse matrices of symmetric block matrices.

\begin{lem} \label{lem:block_matrix}
	For a symmetric block matrix
	\[
	X = \pmat{A & B \\ {}^t\!B & C} \in \GL_{m+n} (\bbC)
	\]
	such that $ A \in \GL_m(\bbC) $ and $ C \in \GL_n(\bbC) $ be symmetric matrices and $ B \in \Mat_{m, n}(\bbC) $, let
	\[
	S \coloneqq (C - {}^t\!B A^{-1} B)^{-1} \in \GL_n(\bbC), \quad
	T \coloneqq -A^{-1} B \in \Mat_{m, n}(\bbC).
	\]
	Then, it holds that
	\[
	X^{-1} = \pmat{T \\ I} S \pmat{{}^t\!T & I} + \pmat{A^{-1} & \\ & O}, \quad
	\det S = \frac{\det A}{\det X}.
	\]	
	In particular, $ S $ is the $ n \times n $ bottom right submatrix of $ X^{-1} $. 
\end{lem}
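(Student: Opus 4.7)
The plan is to reduce the lemma to the standard block LDU (Schur complement) decomposition and then read off both assertions directly.

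First I would write $X$ as a product of a block-lower-triangular, a block-diagonal, and a block-upper-triangular matrix. Concretely, using that $A$ is invertible and symmetric (so ${}^t\!(A^{-1}B) = {}^t\!B\,A^{-1}$), I would verify by direct multiplication the identity
\[
X = \pmat{I & 0 \\ {}^t\!B A^{-1} & I} \pmat{A & 0 \\ 0 & C - {}^t\!B A^{-1} B} \pmat{I & A^{-1} B \\ 0 & I}.
\]
Since $X$ is invertible, the middle diagonal block $C - {}^t\!B A^{-1} B$ must be invertible too, which is exactly the assumption that $S$ is defined; equivalently, $S^{-1}$ exists.

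Next I would invert the three factors separately. The two triangular factors invert by flipping the sign of the off-diagonal block, while the diagonal factor becomes $\diag(A^{-1}, S)$. With $T = -A^{-1}B$ (so ${}^t\!T = -{}^t\!B A^{-1}$), this gives
\[
X^{-1} = \pmat{I & T \\ 0 & I} \pmat{A^{-1} & 0 \\ 0 & S} \pmat{I & 0 \\ {}^t\!T & I}.
\]
Multiplying out the right-hand side yields
\[
X^{-1} = \pmat{A^{-1} + T S\, {}^t\!T & T S \\ S\, {}^t\!T & S} = \pmat{T \\ I} S \pmat{{}^t\!T & I} + \pmat{A^{-1} & 0 \\ 0 & 0},
\]
which is the first claimed formula. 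The ``in particular'' statement about $S$ being the bottom-right submatrix of $X^{-1}$ is immediate from reading off the bottom-right block.

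For the determinant formula I would take determinants in the original LDU factorisation: the triangular factors have determinant $1$, so $\det X = \det A \cdot \det(C - {}^t\!B A^{-1} B) = \det A \cdot (\det S)^{-1}$, which rearranges to $\det S = \det A / \det X$. There is no real obstacle here; the only points requiring care are keeping track of the symmetry of $A$ when identifying ${}^t\!T$ with $-{}^t\!B A^{-1}$, and recognising that invertibility of $X$ together with invertibility of $A$ forces the Schur complement (hence $S$) to be invertible, so that the hypothesis ``$C \in \GL_n(\bbC)$'' is used only insofar as it is consistent with the existence of $S$.
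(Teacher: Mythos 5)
Your proposal is correct and essentially the same as the paper's argument: the LDU factorisation you write down is exactly the one the paper uses (note $-T = A^{-1}B$ and $-{}^t\!T = {}^t\!B A^{-1}$), and the determinant formula is obtained from it in the same way. The only cosmetic difference is that the paper verifies the inverse formula by directly multiplying $X$ against the candidate for $X^{-1}$ rather than by inverting the three factors, so your version is marginally more unified but not a different proof.
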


\begin{proof}
	The first equality follows from
	\[
	X \left( \pmat{T \\ I} S \pmat{{}^t\!T & I} + \pmat{A^{-1} & \\ & O} \right) 
	= \pmat{ (AT+B) S {}^t\!T + I & (AT+B)S \\ ({}^t\!B T + C)S{}^t\!T + {}^t\!B A^{-1} & ({}^t\!B T + C)S }
	\]
	and the second equality follows from
	\[
	X = \pmat{ I &  \\  -{}^t\!T & I} \pmat{A & \\ &S^{-1}} \pmat{I & -T \\ & I}.
	\]
\end{proof}

Here we remark that \cref{lem:block_matrix} is a generalisation of the completing the square $ ax^2 + bx + c = a(x + b/2a)^2 - (b^2 - 4ac)/4a $ for symmetric block matrices.

\begin{proof}[Proof of $ \cref{prop:S} $]
	We obtain \cref{item:prop:S1} and the equalities
	\[
	S^{-1} = -W_{\ge 2} + W_{\ge 2, 1} W_1^{-1} W_{1, \ge 2}, \quad
	\det S = \frac{\det W_1}{\det W}
	\]
	by applying \cref{lem:block_matrix} for the block matrix
	\[
	W = \pmat{W_1 & W_{1, \ge 2} \\ W_{\ge 2, 1} & W_{\ge 2}} \in \GL_{m+n} (\Z).
	\]
	Since $ \det W \in \{ \pm 1 \}, W_1 = \diag(w_i)_{i \in V_1} $, and $ \det S > 0 $, we get \cref{item:prop:S2}. 
	\cref{item:prop:S3} follows from the fact that the $ (v, v') $-component of $ W_{\ge 2, 1} W_1^{-1} W_{1, \ge 2} $ is
	\[
	\sum_{i \in V_1, \, \{ i, v \}, \{ i, v' \} \in E} \frac{1}{w_i} = \delta_{v, v'} \sum_{i \in V_1, \, \{ i, v \} \in E} \frac{1}{w_i}.
	\]
	We prove \cref{item:prop:S4}. 
	By \cref{item:prop:S3}, the $ (v, v') $-cofactor of $ S^{-1} $ is an element of
	$ \left( \prod_{v^{\prime \prime} \in V_{\ge 2} \smallsetminus \{ v, v' \}} 1/M_{v^{\prime \prime}} \right) \Z $.
	Since $ \det S =  \pm \prod_{v \in V_{\ge 2}} M_v $ by \cref{item:prop:S3}, we obtain the claim.
	Finally, we prove \cref{item:prop:S5}.
	Let $ W'_{\ge 2} \coloneqq \diag(M_v)_{v \in V_{\ge 2}} \in \Aut(\Q^{\abs{V_{\ge 2}}}) $.
	It suffices to show $ S^{-1} W'_{\ge 2} \in \Aut(\Q^{\abs{V_{\ge 2}}}) $. 
	Since $ \det S = \pm \det W'_{\ge 2} $, we need only check that each component of $ S^{-1} W'_{\ge 2} $ is an integer, which follows from \cref{item:prop:S3}. 
\end{proof}

% --------------------------------------------------------------------------

\subsection{Periodic maps determined by the linking matrix} \label{subsec:char}

% --------------------------------------------------------------------------

In this subsection, we introduce periodic maps determined by the linking matrix and prove its basic properties.

For each vertex $ v \in V_{\ge 2} $, let 
\[
\calS_v
\coloneqq
\left\{ \frac{1}{2}\deg(v) - 1 + \sum_{i \in \overline{v}} \frac{l_i}{2w_i} \relmiddle{|} (l_i)_{i \in \overline{v}} \in \{ \pm 1 \}^{\overline{v}} \right\}
\]
Here we set $ \calS_v \coloneqq \left\{ \deg(v)/2 - 1 \right\} $ if $ \overline{v} = \emptyset $.
We also define the periodic map $ \veps_v \colon \frac{1}{2M_v}\Z/\Z \to \{ 0, \pm 1 \} $ by
\[
\veps_v(\alpha_v) \coloneqq 
\begin{dcases}
	\prod_{i \in \overline{v}} l_i 
	& \text{ if } \alpha_v \equiv \frac{1}{2}\deg(v) - 1 + \sum_{i \in \overline{v}} \frac{l_i}{2w_i}  \bmod \Z 
	\text{ for some } (l_i)_{i \in \overline{v}} \in \{ \pm 1 \}^{\overline{v}}, \\
	0 & \alpha_{v} \notin (\calS_v + \Z)/\Z.
\end{dcases}
\]
This map is well-defined by the following lemma.

\begin{lem} \label{lem:bij_calS_v}
	For a vertex $ v \in V_{\ge 2} $ such that $ \overline{v} \neq \emptyset $, the followings hold. 
	\begin{enumerate}
		\item \label{item:lem:bij_calS_v1} The map
		\[
		\begin{array}{ccc}
			\{ \pm 1 \}^{\overline{v}} & \longrightarrow & \calS_v \\
			(l_i)_{i \in \overline{v}} & \longmapsto & \displaystyle \frac{1}{2}\deg(v) - 1 + \sum_{i \in \overline{v}} \frac{l_i}{2w_i}
		\end{array}
		\]
		is bijective.
		\item \label{item:lem:bij_calS_v2}
		The natural projection $ \calS_v \to \frac{1}{2M_{v}} \Z/\Z $ is injective. 
		Thus, the map $ \veps_v $ is well-defined.
		\item \label{item:lem:bij_calS_v3}
		%		$ \calS_v \subset \frac{1}{2M_{v}} \Z \smallsetminus \frac{1}{2} \Z $.
		For any $ \alpha_v \in \calS_v $, it holds that $ \gcd(2M_v \alpha_v, M_v) = 1 $.
	\end{enumerate}
\end{lem}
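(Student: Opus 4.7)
The plan is to reduce all three parts to modular computations modulo a single $ w_{i_0} $, invoking the pairwise coprimality of $ \{ w_i \}_{i \in \overline{v}} $ from \cref{lem:coprime}.

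For \cref{item:lem:bij_calS_v1} and \cref{item:lem:bij_calS_v2} together, I would prove the stronger statement that the composition
\[
\{ \pm 1 \}^{\overline{v}} \longrightarrow \calS_v \longrightarrow \tfrac{1}{2M_v}\Z/\Z
\]
is injective. Both claims then follow at once: the first arrow is surjective by the very definition of $ \calS_v $, so injectivity of the composition forces injectivity of each factor. Given $ (l_i), (l'_i) \in \{ \pm 1 \}^{\overline{v}} $ with matching images modulo $ \Z $, set $ s_i := (l_i - l'_i)/2 \in \{ -1, 0, 1 \} $; the common shift $ \deg(v)/2 - 1 $ cancels, yielding $ \sum_{i \in \overline{v}} s_i/w_i \in \Z $. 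Multiplying by $ M_v $ and reducing modulo $ w_{i_0} $ for a fixed $ i_0 \in \overline{v} $, every term with $ i \neq i_0 $ vanishes, leaving $ s_{i_0} \prod_{j \in \overline{v} \smallsetminus \{i_0\}} w_j \equiv 0 \pmod{w_{i_0}} $. By \cref{lem:coprime} the factor $ \prod_{j \neq i_0} w_j $ is a unit modulo $ w_{i_0} $, so $ s_{i_0} \equiv 0 \pmod{w_{i_0}} $; combined with $ \abs{w_{i_0}} \ge 2 $ (from the Neumann normalisation $ w_v \le -2 $ for $ v \in V_1 $) and $ s_{i_0} \in \{-1,0,1\} $, this forces $ s_{i_0} = 0 $. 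Varying $ i_0 $ gives $ l_{i_0} = l'_{i_0} $ for every $ i_0 \in \overline{v} $, as required.

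For \cref{item:lem:bij_calS_v3}, I would expand
\[
2M_v \alpha_v = M_v(\deg(v) - 2) + \sum_{i \in \overline{v}} l_i \prod_{j \in \overline{v} \smallsetminus \{i\}} w_j
\]
and reduce modulo a fixed $ w_{i_0} $. The first term is divisible by $ M_v $ and every summand with $ i \neq i_0 $ is divisible by $ w_{i_0} $, so $ 2M_v \alpha_v \equiv l_{i_0} \prod_{j \neq i_0} w_j \pmod{w_{i_0}} $. By \cref{lem:coprime} this residue is a unit modulo $ w_{i_0} $, whence $ \gcd(2M_v \alpha_v, w_{i_0}) = 1 $ for every $ i_0 \in \overline{v} $. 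Since the $ w_i $ are pairwise coprime, coprimality with each $ w_{i_0} $ upgrades to coprimality with $ M_v = \prod_{i \in \overline{v}} w_i $.

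There is no substantial obstacle: the entire lemma unfolds from the single manoeuvre of clearing denominators by $ M_v $ and then reducing modulo one $ w_{i_0} $ at a time, which collapses every sum to a single term so that \cref{lem:coprime} finishes the argument. The only mildly delicate point is the use of $ \abs{w_{i_0}} \ge 2 $ at the end of the argument for \cref{item:lem:bij_calS_v1} and \cref{item:lem:bij_calS_v2}, needed to promote $ s_{i_0} \equiv 0 \pmod{w_{i_0}} $ to $ s_{i_0} = 0 $ rather than the spurious possibility $ s_{i_0} = \pm 1 $.
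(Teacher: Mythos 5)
Your argument is correct, and it reaches the conclusion by a somewhat different mechanism than the paper. The paper proves the injectivity of \cref{item:lem:bij_calS_v2} by decomposing $\Z/2M_v\Z$ via the Chinese remainder theorem as $\Z/2w_{i_0}\Z \oplus \bigoplus_{i \neq i_0}\Z/w_i\Z$, where $i_0$ is chosen (using \cref{lem:coprime}) so that every other $w_i$ is odd, and then checks injectivity component by component; the factor of $2$ forces the special treatment of the $i_0$-component modulo $2w_{i_0}$. Your difference trick --- passing to $s_i = (l_i - l'_i)/2 \in \{-1,0,1\}$ before clearing denominators --- eliminates the factor of $2$ at the outset, so you never need the CRT decomposition or the observation that at most one $w_i$ is even; you simply reduce $\sum_i s_i M_v/w_i \in M_v\Z$ modulo each $w_{i_0}$ in turn and invoke \cref{lem:coprime} to see that $\prod_{j \neq i_0} w_j$ is a unit. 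The deduction of \cref{item:lem:bij_calS_v1} from injectivity of the composition is the same logical step the paper makes (in the opposite order of presentation), and your treatment of \cref{item:lem:bij_calS_v3} coincides with the paper's one-line argument via $\gcd(M_v/w_i, w_i) = 1$. Both proofs ultimately rest on \cref{lem:coprime} and on the normalisation $w_i \le -2$ for degree-one vertices; yours is the more economical of the two, at the cost of not exhibiting the explicit residues of $2M_v\alpha_v$ in each CRT component, which the paper's computation incidentally records.
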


\begin{proof}
	By \cref{lem:coprime}, there exists $ i_0 \in \overline{v} $ such that $ w_i $ is odd for any $ i \in \overline{v} \smallsetminus \{ i_0 \} $.
	Then, we have
	\begin{equation} %\label{eq:CRT_isom}
		\Z/2M_v \Z 
		\cong 
		\Z/ 2w_{i_0} \Z \oplus \bigoplus_{i \in \overline{v} \smallsetminus \{ i_0 \}} \Z/w_i \Z
	\end{equation}
	by the Chinese remainder theorem.
	For each
	\[
	\alpha_v =
	\frac{1}{2}\deg(v) - 1 + \sum_{i \in \overline{v}} \frac{l_i}{2w_i}
	\in \calS_v, \quad
	(l_i)_{i \in \overline{v}} \in \{ \pm 1 \}^{\overline{v}},
	\]
	it holds that
	\[
	2 M_v \alpha_v
	\equiv
	\left\{
	\begin{alignedat}{3}
		&\frac{M_v}{w_i} l_i &
		&\bmod w_i &
		&\quad
		\text{ if }
		i \in \overline{v} \smallsetminus \{ i_0 \}, \\
		&M_v \deg(v) + \frac{M_v}{w_{i_0}} l_{i_0} 
		+ \sum_{i \in \overline{v} \smallsetminus \{ i_0 \} } \frac{M_v}{w_i} &
		&\bmod 2w_{i_0} &
		&\quad
		\text{ if }
		i = i_0.
	\end{alignedat}
	\right.
	\]
	Here $ M_v / w_i \not\equiv - M_v / w_i \bmod w_i $ for any vertex $ i \in \overline{v} \smallsetminus \{ i_0 \} $ since $ w_i $ is odd.
	We also have $ M_v / w_{i_0} \not\equiv - M_v / w_{i_0} \bmod w_{i_0} $ since we assume $ w_{i_0} \le -2 $ in \cref{subsec:graph}.
	Thus, we obtain \cref{item:lem:bij_calS_v2}. 
	By this argument, we can prove injectivity of the composition of the map $ \{ \pm 1 \}^{\overline{v}} \to \calS_v $ in \cref{item:lem:bij_calS_v1}
	and the map $ \calS_v \to \Z/2M_{v}\Z, \, \alpha_v \mapsto 2M_v \alpha_v $ which is injective by \cref{item:lem:bij_calS_v2}.
	Hence, we obtain \cref{item:lem:bij_calS_v1}.
	%	さらに$ i \in \overline{v} \smallsetminus \{ i_0 \} $に対し$ M_v / w_i \not\equiv 0 \bmod w_i $なので$ 2M_v \alpha_v \notin M_v \Z $である. 
	Moreover, we have \cref{item:lem:bij_calS_v3} since $ \gcd(M_v / w_i, w_i) = 1 $ for each $ i \in \overline{v} $. 
\end{proof}

In the above lemma, \cref{item:lem:bij_calS_v1} was first proved by Akihito Mori.
The proof presented here is due to the author.

The following lemma follows from the definition immediately.

\begin{lem} \label{lem:epsilon}
	For any vertex $ v \in V_{\ge 2} $, the followings hold.
	\begin{enumerate}
		\item \label{item:lem:epsilon2}
		\[
		\sum_{\alpha_v \in \frac{1}{2 M_v} \Z / \Z} \veps_v (\alpha_v) = 0.
		\]
		%		\item \label{item:lem:epsilon2} $ \veps_v (\alpha_v) \neq 0 $なる$ \alpha_v \in \frac{1}{2 M_v} \Z / \Z $の既約分数の分母は$ 2 M_i $\textcolor{red}{とは限らない！}
		\item \label{item:lem:epsilon3}
		For each $ \alpha_v \in \frac{1}{2 M_v} \Z / \Z $ such that $ \veps_v (\alpha_v) \neq 0 $, $ M_v \alpha_v, M_v \alpha_v^2 \bmod \Z $ are independent of $ \alpha_v $ respectively.
		\item \label{item:lem:epsilon4}
		\[
		\sum_{\alpha_{v} \in \calS_{v}} \veps_{v}(\alpha_{v}) q^{\alpha_{v}}
		=
		q^{\deg(v)/2 - 1} \prod_{i \in \overline{v}} \left( q^{1 / 2w_{i}} - q^{-1 / 2w_{i}} \right).
		\]
	\end{enumerate}
\end{lem}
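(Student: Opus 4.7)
The plan is to work everywhere through the bijection of \cref{item:lem:bij_calS_v1}, which parametrizes $\calS_v$ by $\{\pm 1\}^{\overline{v}}$ via
\[
(l_i)_{i\in\overline{v}} \longmapsto \alpha_v = \tfrac{1}{2}\deg(v) - 1 + \sum_{i \in \overline{v}} \tfrac{l_i}{2w_i}
\]
and under which $\veps_v(\alpha_v) = \prod_{i \in \overline{v}} l_i$. Under the hypothesis of \cref{thm:main} one has $\abs{\overline{v}} \ge \deg(v) - 1 \ge 1$ for every $v \in V_{\ge 2}$, so $\overline{v} \neq \emptyset$.

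After this reindexing, \cref{item:lem:epsilon2} is immediate from
\[
\sum_{(l_i) \in \{\pm 1\}^{\overline{v}}} \prod_{i \in \overline{v}} l_i = \prod_{i \in \overline{v}} (1 + (-1)) = 0,
\]
and \cref{item:lem:epsilon4} follows from the distributive law applied to
\[
q^{\deg(v)/2 - 1} \sum_{(l_i)} \prod_{i \in \overline{v}} l_i \, q^{l_i/(2w_i)} = q^{\deg(v)/2 - 1} \prod_{i \in \overline{v}} \left( q^{1/(2w_i)} - q^{-1/(2w_i)} \right).
\]

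For \cref{item:lem:epsilon3}, given two preimages $(l_i), (l_i') \in \{\pm 1\}^{\overline{v}}$ producing $\alpha_v, \alpha_v'$, I would write $\alpha_v - \alpha_v' = \sum_{i \in \overline{v}} (l_i - l_i')/(2 w_i)$, so that
\[
M_v (\alpha_v - \alpha_v') = \sum_{i \in \overline{v}} \frac{M_v}{w_i} \cdot \frac{l_i - l_i'}{2} \in \Z,
\]
since $M_v / w_i \in \Z$ and $(l_i - l_i')/2 \in \{-1, 0, 1\}$. For the quadratic part, I would split $\alpha_v = A + B$ with $A = \deg(v)/2 - 1$ and $B = \sum_i l_i/(2 w_i)$, expand $\alpha_v^2 - \alpha_v'^2 = 2A(B - B') + (B^2 - B'^2)$, and handle the first summand by the previous computation (using $2A = \deg(v) - 2 \in \Z$). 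In $B^2 - B'^2$ the diagonal contributions cancel because $l_i^2 = 1$, leaving
\[
M_v (B^2 - B'^2) = \sum_{i < j} \frac{M_v}{w_i w_j} \cdot \frac{l_i l_j - l_i' l_j'}{2} \in \Z,
\]
as $M_v/(w_i w_j) = \prod_{k \in \overline{v} \smallsetminus \{i,j\}} w_k$ is an integer and $(l_i l_j - l_i' l_j')/2 \in \{-1, 0, 1\}$.

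There is no real obstacle here; the lemma really follows from the definition and the divisibility facts $w_i \mid M_v$ and $w_i w_j \mid M_v$ for distinct $i, j \in \overline{v}$, as the author indicates. The only part demanding any thought is the quadratic invariance in \cref{item:lem:epsilon3}, and even there the calculation is forced by expanding $(A + B)^2$ and tracking the two divisibility patterns separately.
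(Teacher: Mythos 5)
Your verification is correct and is exactly the direct-from-the-definition argument the paper intends: the paper offers no written proof (it states that the lemma ``follows from the definition immediately''), and your reindexing by the bijection $\{\pm 1\}^{\overline{v}} \to \calS_v$ together with the divisibility facts $w_i \mid M_v$ and $w_i w_j \mid M_v$ is precisely what makes each item immediate. Your side remark is also a genuine catch: item \cref{item:lem:epsilon2} really does fail when $\overline{v} = \emptyset$ (the sum equals $1$), so the implicit restriction to $\overline{v} \neq \emptyset$ supplied by the hypothesis of \cref{thm:main} is needed, even though the lemma is stated for all $v \in V_{\ge 2}$.
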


The following lemma is very important for our proof of our main theorem.
We use it in a proof of \cref{prop:Gauss_vanish} later.

\begin{lem}[{\cite[Lemma 8]{FIMT}}] \label{lem:FIMT}
	For $ 0 \le n \le \abs{\overline{v}} - 1 $, it holds that
	\[
	\sum_{\alpha_{v} \in \calS_{v}} \veps_{v}(\alpha_{v}) \alpha_v^n
	= 0.
	\]
\end{lem}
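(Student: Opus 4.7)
The plan is to translate the sum into a combinatorial identity on the hypercube $\{\pm 1\}^{\overline{v}}$ and exploit the strict degree bound $n \le |\overline{v}| - 1$. The case $\overline{v} = \emptyset$ is vacuous (no such $n$ exists), so assume $\overline{v} \neq \emptyset$ and set $m := \abs{\overline{v}}$. By \cref{lem:bij_calS_v}~\cref{item:lem:bij_calS_v1}, the parametrisation of $\calS_v$ by $(l_i)_{i \in \overline{v}} \in \{\pm 1\}^{\overline{v}}$ is a bijection, and the sign $\veps_v$ pulls back to $\prod_{i \in \overline{v}} l_i$. Writing $c := \deg(v)/2 - 1$ and $a_i := 1/(2 w_i)$, the claimed identity reduces to
\[
\sum_{(l_i) \in \{\pm 1\}^{\overline{v}}} \left(\prod_{i \in \overline{v}} l_i\right) \left(c + \sum_{i \in \overline{v}} a_i l_i\right)^n = 0.
\]

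Next, I would expand the factor $(c + \sum_i a_i l_i)^n$ by the multinomial theorem and reduce each $l_i^{k_i}$ using $l_i^2 = 1$. This realises the second factor as a $\Q$-linear combination of squarefree monomials $\prod_{i \in I} l_i$ with $I \subseteq \overline{v}$ and, crucially, $\abs{I} \le n$. After multiplying by $\prod_{i \in \overline{v}} l_i$, each such term becomes $\prod_{i \in \overline{v} \smallsetminus I} l_i$.

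Finally, I invoke character orthogonality on the Boolean cube: by $\sum_{l \in \{\pm 1\}} l = 0$ and Fubini, the sum of $\prod_{j \in J} l_j$ over $\{\pm 1\}^{\overline{v}}$ equals $2^m$ if $J = \emptyset$ and $0$ otherwise. For every $I$ appearing in the expansion one has $\abs{\overline{v} \smallsetminus I} = m - \abs{I} \ge m - n \ge 1$ by the hypothesis $n \le m - 1$, hence $J := \overline{v} \smallsetminus I$ is never empty and every contribution vanishes. There is essentially no obstacle beyond bookkeeping here: the whole argument is a clean application of hypercube character orthogonality coupled with the strict degree bound $n < \abs{\overline{v}}$, and the latter is precisely what makes \cref{lem:FIMT} the sharp input needed later (in \cref{prop:Gauss_vanish}) to force vanishing of the negative-degree terms in the asymptotic expansion.
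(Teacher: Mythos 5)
Your proof is correct, but it takes a genuinely different route from the paper. You pull the sum back to the hypercube $\{\pm 1\}^{\overline{v}}$ via \cref{lem:bij_calS_v}~\cref{item:lem:bij_calS_v1}, expand $\bigl(c + \sum_i a_i l_i\bigr)^n$ multinomially, reduce modulo $l_i^2 = 1$ to squarefree monomials $\prod_{i \in I} l_i$ with $\abs{I} \le n$, and kill every term by character orthogonality since $\overline{v} \smallsetminus I \neq \emptyset$ under the hypothesis $n \le \abs{\overline{v}} - 1$; each step checks out, including the vacuous case $\overline{v} = \emptyset$. The paper instead deduces the lemma from \cref{lem:G_v_at_1}: writing $G_v(q) = (q^{2M_v}-1)^{2-\deg(v)} \sum_{\alpha_v \in \calS_v} \veps_v(\alpha_v) q^{2M_v\alpha_v}$ and substituting $q = e^{-t}$, the coefficients of the Taylor expansion of the sum factor are exactly the weighted power sums $\sum_{\alpha_v} \veps_v(\alpha_v)\alpha_v^n$, and the order of vanishing $2 - \deg(v) + \abs{\overline{v}}$ of $G_v(e^{-t})$ at $t=0$ forces the first $\abs{\overline{v}}$ of them to vanish. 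The two arguments are close cousins --- the order of vanishing in \cref{lem:G_v_at_1} comes from the factor $\prod_{i \in \overline{v}}(q^{1/2w_i} - q^{-1/2w_i})$, which is the generating-function incarnation of your orthogonality relation $\sum_{l_i \in \{\pm 1\}} l_i = 0$ --- but your version is more elementary and self-contained, whereas the paper's derivation is deliberately chosen to illustrate its central theme of extracting vanishing of weighted sums from holomorphy and order-of-vanishing of rational functions, the same mechanism later driving \cref{prop:Gauss_vanish}.
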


In the next subsection, we give a proof of this lemma by a different method from \cite{FIMT}.

At the end of this section, we consider products for vertices.
Let $ \calS \coloneqq \prod_{v \in V_{\ge 2}} \calS_v $ and the map $ \veps \colon (2S)^{-1}(\Z^{V_{\ge 2}}) / \Z^{V_{\ge 2}} \to \{ 0, \pm 1 \} $ be
\[
\veps \left((\alpha_v)_{v \in V_{\ge 2}} \right) \coloneqq \prod_{v \in V_{\ge 2}} \veps_v( \alpha_v).
\]
This is well-defined since $ \calS \subset (2S)^{-1}(\Z^{V_{\ge 2}}) $ by \cref{prop:S} \cref{item:prop:S5}.

The following lemma follows from \cref{lem:bij_calS_v}. 

\begin{lem} \label{lem:bij_calS}
	\begin{enumerate}
		\item \label{item:lem:bij_calS1} The map
		\[
		\begin{array}{ccc}
			\{ \pm 1 \}^{V_1} & \longrightarrow & \calS \\
			(l_i)_{i \in V_1} & \longmapsto & \displaystyle \left( \frac{1}{2}\deg(v) - 1 + \sum_{i \in \overline{v}} \frac{l_i}{2w_i} \right)_{v \in V_{\ge 2}}
		\end{array}
		\]
		is bijective.
		\item \label{item:lem:bij_calS1}
		The natural projection $ \calS \to (2S)^{-1}(\Z^{V_{\ge 2}}) / \Z^{V_{\ge 2}} $ is injective. 
	\end{enumerate}
\end{lem}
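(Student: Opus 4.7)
The plan is to reduce both claims to the componentwise statements of \cref{lem:bij_calS_v} by exploiting the product decomposition on both sides.

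The first step is to use that $\Gamma$ is a tree to obtain the disjoint union
\[
V_1 = \bigsqcup_{v \in V_{\ge 2}} \overline{v},
\]
which holds whenever $V_{\ge 2}$ is non-empty, because every leaf of a tree then has a unique neighbour of degree $\ge 2$. The only degenerate cases with $V_{\ge 2} = \emptyset$ are the single-vertex graph (for which the lemma is vacuous) and the two-vertex tree consisting of two leaves $i_1, i_2$, but the latter is ruled out by the standing normalization $w_v \le -2$ for leaves together with $\det W = \pm 1$, since then $\det W = w_{i_1} w_{i_2} - 1 \ge 3$. Under the resulting identification $\{\pm 1\}^{V_1} \cong \prod_{v \in V_{\ge 2}} \{\pm 1\}^{\overline{v}}$, the map in the first assertion becomes the product of the componentwise bijections $\{\pm 1\}^{\overline{v}} \to \calS_v$ provided by \cref{lem:bij_calS_v}~\cref{item:lem:bij_calS_v1}, and a product of bijections is a bijection onto $\prod_{v \in V_{\ge 2}} \calS_v = \calS$.

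For the second claim I would combine \cref{prop:S}~\cref{item:prop:S5} with $(2S)^{-1} = \tfrac{1}{2} S^{-1}$ to get
\[
(2S)^{-1}\bigl(\Z^{V_{\ge 2}}\bigr) = \bigoplus_{v \in V_{\ge 2}} \tfrac{1}{2M_v}\Z.
\]
Dividing by $\Z^{V_{\ge 2}} = \bigoplus_v \Z$ decomposes the target as $\bigoplus_{v \in V_{\ge 2}} \tfrac{1}{2M_v}\Z / \Z$, and the natural projection from $\calS = \prod_{v \in V_{\ge 2}} \calS_v$ splits accordingly as the product of the componentwise projections $\calS_v \to \tfrac{1}{2M_v}\Z / \Z$. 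Each of these is injective by \cref{lem:bij_calS_v}~\cref{item:lem:bij_calS_v2}, so their product is injective as well.

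There is essentially no hard step in this argument: the whole content of \cref{lem:bij_calS} is already present in the componentwise version \cref{lem:bij_calS_v} and in the explicit description of $S^{-1}(\Z^{V_{\ge 2}})$ from \cref{prop:S}. The only point worth flagging is the combinatorial check that $V_1 = \bigsqcup_{v \in V_{\ge 2}} \overline{v}$ and the handling of the degenerate cases with $V_{\ge 2} = \emptyset$; both are immediate under the standing assumptions on $\Gamma$.
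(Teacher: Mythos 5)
Your proposal is correct and follows the same route the paper intends: the paper offers no written proof beyond asserting that the lemma "follows from Lemma \ref{lem:bij_calS_v}," and your argument is exactly that componentwise reduction, with the decompositions $V_1 = \bigsqcup_{v \in V_{\ge 2}} \overline{v}$ and $(2S)^{-1}(\Z^{V_{\ge 2}}) = \bigoplus_{v} \tfrac{1}{2M_v}\Z$ (the latter from Proposition \ref{prop:S}) supplying the product structure. Your explicit treatment of the degenerate cases $V_{\ge 2} = \emptyset$ is a welcome extra detail not spelled out in the paper.
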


% --------------------------------------------------------------------------

\subsection{Rational functions determined by the linking matrix} \label{subsec:rat_func}

% --------------------------------------------------------------------------

In this subsection, we introduce rational functions determined by the linking matrix and prove its basic properties.

For a vertex $ v \in V_{\ge 2} $ and complex variable $ q $, define a rational function
\[
G_v (q)	\coloneqq (q^{M_{v}} - q^{-M_{v}})^{2 - \deg(v)} \prod_{i \in \overline{v}} \left( q^{M_{v} / w_{i}} - q^{-M_{v} / w_{i}} \right).
\]

It has the following Laurent expansion.

\begin{rem} \label{rem:G_v_symmetry}
	$ G_v(q^{-1}) = (-1)^{\deg (v) + \abs{\overline{v}}} G_v(q) $.
\end{rem}

The rational function $ G_v (q) $ has the following Laurent expansion.

\begin{lem} \label{lem:expansion_v}
	For a vertex $ v \in V_{\ge 2} $, $ G_{v} (q) $ is expanded as
	\[
	G_{v} (q)
	=
	\sum_{\alpha_{v} \in \calS_{v}} \veps_{v}(\alpha_{v}) \sum_{n_{v} = 0}^{\infty} \binom{n_{v} + \deg(v) - 3}{n_{v}} q^{2M_{v}(n_{v} + \alpha_{v})}
	\]
	for $ 0 < \abs{q}^{\sgn{M_v}} < 1 $.
	Here we define
	\begin{equation} \label{eq:binomial}
		\pmat{m \\ l}
		\coloneqq
		\begin{dcases}
			\frac{m!}{l! (m-l)!} & \text{ if } 0 \le l \le m, \\
			1 & \text{ if } m=-1, \\
			0 & \text{ otherwise}.
		\end{dcases}
	\end{equation}
\end{lem}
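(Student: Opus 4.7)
The plan is to expand the two factors of $G_v(q)$ independently, reorganize the finite product using the set $\calS_v$ and the sign $\veps_v$, and then collapse the result using the generalized binomial theorem.

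First I would expand the finite product over $\overline{v}$ by distributing:
\[
\prod_{i \in \overline{v}} \left( q^{M_v/w_i} - q^{-M_v/w_i} \right) = \sum_{(l_i) \in \{\pm 1\}^{\overline{v}}} \left( \prod_{i \in \overline{v}} l_i \right) q^{\sum_{i \in \overline{v}} l_i M_v / w_i}.
\]
Next I would apply \cref{lem:epsilon} \cref{item:lem:epsilon4} with $q$ replaced by $q^{2M_v}$; after multiplying through by $q^{-M_v(\deg(v)-2)}$, the right-hand side of the identity in that lemma reproduces the sum above, giving the compact form
\[
\prod_{i \in \overline{v}} \left( q^{M_v/w_i} - q^{-M_v/w_i} \right) = q^{-M_v(\deg(v)-2)} \sum_{\alpha_v \in \calS_v} \veps_v(\alpha_v) \, q^{2M_v \alpha_v}.
\]
The well-definedness of this reindexing is ensured by \cref{lem:bij_calS_v} \cref{item:lem:bij_calS_v1}, so no terms are double-counted.

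For the remaining factor $(q^{M_v} - q^{-M_v})^{2 - \deg(v)}$, I would split into cases. When $\deg(v) = 2$ the factor is $1$, and the claimed series already matches: the binomial convention in \cref{eq:binomial} gives $\binom{-1}{0} = 1$ and $\binom{n_v - 1}{n_v} = 0$ for $n_v \geq 1$, truncating the $n_v$-sum to a single term. When $\deg(v) \geq 3$, set $k := \deg(v) - 2 \geq 1$. The hypothesis $0 < |q|^{\sgn(M_v)} < 1$ forces $|q^{2M_v}| < 1$, which allows me to factor
\[
q^{M_v} - q^{-M_v} = -\,q^{-M_v}\bigl( 1 - q^{2M_v} \bigr)
\]
and apply the generalized binomial series $(1 - x)^{-k} = \sum_{n \geq 0} \binom{n + k - 1}{n} x^n$ with $x = q^{2M_v}$. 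The $q^{kM_v}$ produced by this step is precisely what cancels the $q^{-M_v(\deg(v)-2)}$ from the finite-product step.

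Multiplying the two expansions and collecting powers of $q^{2M_v}$ then yields the stated Laurent expansion. The main subtleties I would watch for are (i) the sign produced by the factorization $q^{M_v} - q^{-M_v} = -q^{-M_v}(1 - q^{2M_v})$, which contributes $(-1)^{\deg(v)-2}$ and must be tracked carefully to see that it matches the claimed formula, and (ii) the domain of convergence: the choice between expanding $(1 - q^{2M_v})^{-k}$ versus $(1 - q^{-2M_v})^{-k}$ is dictated by the sign of $M_v$, which is exactly what the condition $0 < |q|^{\sgn(M_v)} < 1$ records. Once the appropriate expansion is selected in each case, the remaining manipulation is bookkeeping.
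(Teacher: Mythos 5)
Your proposal follows essentially the same route as the paper's own (one-line) proof: the paper simply cites \cref{lem:epsilon}~\cref{item:lem:epsilon4} together with the binomial theorem $(1-q)^{-d}=\sum_{n\ge 0}\binom{n+d-1}{n}q^n$, which is exactly the two-step expansion you carry out in detail. The one substantive issue is the sign you flag in point (i) but do not actually resolve: it does \emph{not} cancel. In the region $|q^{2M_v}|<1$ one has
\[
(q^{M_v}-q^{-M_v})^{2-\deg(v)}
=(-1)^{\deg(v)}\,q^{M_v(\deg(v)-2)}\sum_{n\ge 0}\binom{n+\deg(v)-3}{n}q^{2M_vn},
\]
and the factor $q^{M_v(\deg(v)-2)}$ is absorbed by the reindexing of the finite product via $\calS_v$ and $\veps_v$, but the factor $(-1)^{\deg(v)}$ survives, so the computation yields $(-1)^{\deg(v)}$ times the right-hand side of the stated identity. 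A minimal check: for $\deg(v)=3$ and $\overline{v}=\emptyset$ one has $M_v=1$, $\calS_v=\{1/2\}$, $\veps_v(1/2)=1$, and $(q-q^{-1})^{-1}=-q/(1-q^2)=-\sum_{n\ge0}q^{2n+1}$, whereas the claimed expansion gives $+\sum_{n\ge0}q^{2n+1}$. So your assertion that careful tracking shows the sign ``matches the claimed formula'' is not correct as the lemma is literally stated; to be fair, the paper's own proof ignores this sign entirely, so the discrepancy is inherited from the source rather than introduced by your argument, but a complete proof must either record the extra factor $(-1)^{\deg(v)}$ or explain why the statement should be amended.
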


\begin{proof}
	The claim follows from \cref{lem:epsilon} \cref{item:lem:epsilon4} and the binomial theorem
	\[
	(1-q)^{-d} = \sum_{n=0}^{\infty} \pmat{n+d-1 \\n} q^n
	\]
	which holds for any integer $ d \ge 0 $.
\end{proof}

The rational function $ G_v (q) $ is expanded at $ q=1 $ as follows. 

\begin{lem} \label{lem:G_v_at_1}
	For a vertex $ v \in V_{\ge 2} $, it holds that
	\[
	G_v(q) = 2^{2 - \deg(v) + \abs{\overline{v}}} (M_v)^{1 - \deg(v) + \abs{\overline{v}}} (q-1)^{2 - \deg(v) + \abs{\overline{v}}}
	+O((q-1)^{3 - \deg(v) + \abs{\overline{v}}}).
	\]
\end{lem}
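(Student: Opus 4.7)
The plan is straightforward bookkeeping with Taylor expansions at $q=1$. The only slightly delicate point is that when $\deg(v) > 2$ the factor $(q^{M_v}-q^{-M_v})^{2-\deg(v)}$ is really the reciprocal of a polynomial, so $G_v(q)$ may have a pole at $q=1$; the assertion of the lemma is still an equality of Laurent expansions.

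\medskip

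\noindent\textbf{Step 1.} For any nonzero integer $a$, I would record the basic expansion
\[
q^{a} - q^{-a} = 2a\,(q-1) + O\bigl((q-1)^{2}\bigr),
\]
which follows from differentiating once at $q=1$. Raising this to an integer power (possibly negative) gives
\[
(q^{a} - q^{-a})^{n} = (2a)^{n} (q-1)^{n} \bigl(1 + O(q-1)\bigr).
\]

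\medskip

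\noindent\textbf{Step 2.} Applying this to each factor in the definition
\[
G_v(q) = (q^{M_v} - q^{-M_v})^{2 - \deg(v)} \prod_{i \in \overline{v}} \bigl( q^{M_v / w_i} - q^{-M_v / w_i} \bigr),
\]
I would obtain
\[
(q^{M_v} - q^{-M_v})^{2 - \deg(v)} = (2 M_v)^{2 - \deg(v)} (q-1)^{2 - \deg(v)} \bigl(1 + O(q-1)\bigr),
\]
\[
\prod_{i \in \overline{v}} \bigl( q^{M_v / w_i} - q^{-M_v / w_i} \bigr) = \prod_{i \in \overline{v}} \frac{2 M_v}{w_i} \cdot (q-1)^{|\overline{v}|} \bigl(1 + O(q-1)\bigr).
\]

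\medskip

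\noindent\textbf{Step 3.} Multiplying the two expressions and using the identity $\prod_{i \in \overline{v}} w_i = M_v$ to simplify the product of $w_i$ in the denominator:
\[
G_v(q) = 2^{2 - \deg(v) + |\overline{v}|}\, M_v^{2 - \deg(v) + |\overline{v}|}\, M_v^{-1}\, (q-1)^{2 - \deg(v) + |\overline{v}|} + O\!\bigl((q-1)^{3 - \deg(v) + |\overline{v}|}\bigr),
\]
which collapses the $M_v$ exponent to $1 - \deg(v) + |\overline{v}|$ and yields the claimed formula.

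\medskip

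There is essentially no obstacle here; the only care required is to track signs and exponents when $\deg(v) - 2$ exceeds $|\overline{v}|$, in which case $G_v(q)$ has a pole at $q=1$ and the expansion must be read as a Laurent series, but the derivation above is insensitive to the sign of $2-\deg(v)+|\overline{v}|$ since all manipulations are formal identities in $\bbC(\!(q-1)\!)$.
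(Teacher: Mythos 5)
Your proof is correct and takes essentially the same route as the paper's: both expand each factor of $G_v(q)$ to first order at $q=1$ and multiply the leading terms, the paper merely rewriting $q^{a}-q^{-a}$ as a unit times $(q^{2a}-1)=(q-1)(1+q+\cdots+q^{2a-1})$ before reading off the coefficient $2a$, where you differentiate directly. Your closing remark that the identity must be interpreted as a Laurent expansion when $2-\deg(v)+\abs{\overline{v}}<0$ is a correct and sensible clarification, and the computation is insensitive to that sign in both arguments.
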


\begin{proof}
	The idea of our proof is due to Akihito Mori.
	The claim follows from
	\begin{align}
		G_v(q)
		&= q^{(\deg(v) - 1 - \sum_{i \in \overline{v}} 1/w_i) M_v} 
		(q^{2M_{v}} - 1)^{2 - \deg(v)} (q-1)^{\overline{v}}
		\prod_{i \in \overline{v}} \frac{q^{2M_{v} / w_{i}} - 1}{q-1} \\
		&= q^{(\deg(v) - 1 - \sum_{i \in \overline{v}} 1/w_i) M_v}
		(1 + q + \cdots + q^{2M_v - 1})^{2 - \deg(v)} (q-1)^{2 - \deg(v) + \overline{v}} 
		\prod_{i \in \overline{v}} \frac{q^{2M_{v} / w_{i}} - 1}{q-1}
	\end{align}
	and
	\[
	\lim_{q \to 1} \frac{q^{2M_{v} / w_{i}} - 1}{q-1}
	= \lim_{q \to 1} \left( 1 + q + \cdots + q^{2M_v / w_i - 1} \right)
	= \frac{2M_{v}}{w_i}.
	\]
\end{proof}

\cref{lem:FIMT} follows from \cref{lem:G_v_at_1}.
Thus, \cref{lem:G_v_at_1} is a refinement of \cref{lem:FIMT}. 

\begin{proof}[Proof of $ \cref{lem:FIMT} $]
	Since
	\[
	G_{v} (q)
	=
	(q^{2M_{v}} - 1)^{2 - \deg(v)}
	\sum_{\alpha_{v} \in \calS_{v}} \veps_{v}(\alpha_{v}) q^{2M_{v} \alpha_{v}},
	\]
	we have
	\[
	G_{v} (e^{-t})
	=
	\left( t^{2 - \deg(v)} + O(t^{3 - \deg(v)}) \right) 
	\sum_{n = 0}^{\infty} \frac{(-2M_v t)^{n}}{n!}
	\sum_{\alpha_{v} \in \calS_{v}} \veps_{v}(\alpha_{v}) \alpha_v^n.
	\]
	Since $ G_{v} (e^{-t}) = O(t^{2 - \deg(v) + \abs{\overline{v}}}) $ by \cref{lem:G_v_at_1}, we obtain the claim. 
\end{proof}

At the end of this section, we consider products for vertices.

For $ n \in \Z^{V_{\ge 3}} $, we denote
\begin{equation} \label{eq:prod_vertices}
	P(n) \coloneqq \prod_{v \in V_{\ge 4}} \frac{(n_{v} + \deg(v) - 3)(n_{v} + \deg(v) - 4) \cdots (n_v +1)}{(\deg(v) - 3)!}.
\end{equation}
We remark that for $ n \in \Z_{\ge -1}^{V_{\ge 3}} $ it holds that
\[
P(n) = \prod_{v \in V_{\ge 4}} \binom{n_{v} + \deg(v) - 3}{n_{v}}
= \prod_{v \in V_{\ge 3}} \binom{n_{v} + \deg(v) - 3}{n_{v}}
\]
by our definition of binomial coefficients in \cref{eq:binomial}.

For each complex number $ z $, we denote $ \bm{e}(z) \coloneqq e^{2\pi\iu z} $.
We fix a positive integer $ k $ and denote $ \zeta_k \coloneqq e^{2\pi\iu/k} $.

Then, the following holds by \cref{lem:expansion_v}. 

\begin{lem} \label{lem:expansion}
	For $ \mu \in \Z^{V_{\ge 2}} $ and $ t \in \R^{V_{\ge 2}}_{>0} $, it holds that
	\[
	\prod_{v \in V_{\ge 2}} G_{v} \left( \zeta_{2k M_v}^{\mu_v} e^{-t_v/2M_v} \right) 
	=
	\sum_{\alpha \in \calS} \veps(\alpha)
	\sum_{n \in \Z_{\ge 0}^{V_{\ge 3}}} P(n) \bm{e} \left( \frac{1}{k} {}^t\!(n + \alpha) \mu \right) \exp \left( -{}^t\!(n + \alpha) t \right) .
	\]
\end{lem}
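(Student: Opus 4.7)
The plan is to obtain the identity factor by factor, applying Lemma \ref{lem:expansion_v} to each $G_v$ and then pulling the sums and products through. Assuming each $t_v$ has the appropriate sign so that $0 < |q_v|^{\sgn M_v} < 1$ for $q_v := \zeta_{2k M_v}^{\mu_v} e^{-t_v/2M_v}$ (by analytic continuation in $t$ the final identity is an identity of absolutely convergent series in the stated region), Lemma \ref{lem:expansion_v} gives
\[
G_v(q_v) = \sum_{\alpha_v \in \calS_v} \veps_v(\alpha_v) \sum_{n_v = 0}^{\infty} \binom{n_v + \deg(v) - 3}{n_v} q_v^{2M_v(n_v + \alpha_v)}.
\]
First I would simplify the monomial: since $q_v^{2M_v(n_v+\alpha_v)} = \bm{e}\!\left(\mu_v(n_v+\alpha_v)/k\right) \exp(-t_v(n_v+\alpha_v))$, each factor becomes a double sum indexed by $(\alpha_v, n_v)$ with explicit exponential weights.

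Next I would take the product over $v \in V_{\ge 2}$ and interchange the product with the sums (justified by absolute convergence in the chosen region of $t$). This produces an outer sum over $\alpha = (\alpha_v)_v \in \calS$ with weight $\prod_v \veps_v(\alpha_v) = \veps(\alpha)$ (the definition of $\veps$ from \S\ref{subsec:char}), an inner sum over $n = (n_v)_v \in \Z_{\ge 0}^{V_{\ge 2}}$ with weight $\prod_{v \in V_{\ge 2}} \binom{n_v + \deg(v)-3}{n_v}$, and an exponential factor equal to $\bm{e}\!\left(\tfrac{1}{k}{}^t\!(n+\alpha)\mu\right)\exp(-{}^t\!(n+\alpha) t)$ after combining the per-vertex exponentials into vector inner products.

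The remaining point is to reconcile the index set $\Z_{\ge 0}^{V_{\ge 2}}$ appearing after the interchange with the set $\Z_{\ge 0}^{V_{\ge 3}}$ in the claim. I would treat the three cases of $\deg(v)$ separately using convention \eqref{eq:binomial}: for $v \in V_2$ the coefficient $\binom{n_v - 1}{n_v}$ vanishes unless $n_v = 0$ (where the $m=-1$ clause gives $1$), so the sum collapses to $n_v = 0$ and this variable disappears, which is consistent with the fact that $n+\alpha$ and $\alpha$ agree in the $V_2$ component; for $v \in V_3$ the coefficient $\binom{n_v}{n_v} = 1$ for all $n_v \ge 0$, contributing nothing to $P(n)$; and for $v \in V_{\ge 4}$ the binomial expands to $\tfrac{(n_v + \deg(v)-3)\cdots(n_v+1)}{(\deg(v)-3)!}$, exactly the factor defining $P(n)$ in \eqref{eq:prod_vertices}. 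Thus $\prod_{v \in V_{\ge 2}} \binom{n_v + \deg(v)-3}{n_v} = P(n)$, and the sum over $n \in \Z_{\ge 0}^{V_{\ge 2}}$ descends to one over $n \in \Z_{\ge 0}^{V_{\ge 3}}$, with the $V_2$-components of $n$ set to zero throughout (which is harmless in the inner products ${}^t\!(n+\alpha)\mu$ and ${}^t\!(n+\alpha)t$ when read as sums over $V_{\ge 2}$).

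The main obstacle is purely bookkeeping: keeping straight how the convention $\binom{-1}{0}=1$ from \eqref{eq:binomial} eliminates the $V_2$ variables while preserving the $\alpha_v$ contribution there, and aligning the index sets $V_{\ge 2}$, $V_{\ge 3}$, $V_{\ge 4}$ on the two sides. There is no analytic difficulty beyond checking that the sign convention on $t$ matches the hypothesis of Lemma \ref{lem:expansion_v}, after which the identity follows formally.
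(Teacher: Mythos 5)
Your argument is correct and is exactly the route the paper takes: the paper offers no separate proof of \cref{lem:expansion}, stating only that it "holds by \cref{lem:expansion_v}", i.e.\ the per-vertex expansion is substituted with $q_v=\zeta_{2kM_v}^{\mu_v}e^{-t_v/2M_v}$, the product and sums are interchanged, and the convention \eqref{eq:binomial} collapses the $V_2$ variables and produces $P(n)$. Your extra care about the sign of $t_v$ (which amounts to $t_v>0$, since $|q_v|^{\sgn M_v}<1$ holds for either sign of $M_v$ exactly when $t_v>0$) is a reasonable refinement of the paper's looser hypothesis $t\in\R^{V_{\ge 2}}$.
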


% --------------------------------------------------------------------------

\section{Calculations of WRT invariants} \label{sec:WRT}

% --------------------------------------------------------------------------

In this section, we calculate WRT invariants of the plumbed homology sphere $ M(\Gamma) $.
Our starting point is the following expression by \cite{GPPV}. 

\begin{prop}[{\cite[Equation A.12]{GPPV}}]
	\begin{align}
		\WRT_k(M(\Gamma))
		=
		\frac{\bm{e}(\abs{V}/8) \zeta_{4k}^{-\sum_{v \in V} (w_{v} + 3)}}
		{2 \sqrt{2k}^{\abs{V}} (\zeta_{2k} - \zeta_{2k}^{-1})}
		\sum_{\mu \in (\Z \smallsetminus k\Z)^V/2k\Z^V} \zeta_{4k}^{{}^t\!\mu W \mu}
		\prod_{v \in V} \left( \zeta_{2k}^{\mu_v} - \zeta_{2k}^{-\mu_v} \right)^{2-\deg(v)}.
	\end{align}
\end{prop}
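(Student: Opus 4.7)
The stated formula is the standard Reshetikhin--Turaev state-sum expression for the $SU(2)_k$ WRT invariant of a surgery $3$-manifold, specialised to the plumbed link of $\Gamma$. My plan is to apply the RT construction to the surgery presentation of $M(\Gamma)$ and simplify the colored bracket using the tree structure. The plumbing graph $\Gamma$ describes a framed link $L_\Gamma \subset S^3$ whose components are unknots $K_v$ (one per vertex $v \in V$) with framings $w_v$, Hopf-linked exactly when $\{v, v'\} \in E$. Its linking matrix is $W$, surgery on $L_\Gamma$ yields $M(\Gamma)$, and negative-definiteness gives $\sigma(W) = -\abs{V}$, which will produce the Atiyah signature phase $\bm{e}(\abs{V}/8)$.

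Next I would apply the RT formula for $SU(2)_k$, whose modular data consists of labels $\mu \in \{1,\ldots,k-1\}$, $S$-matrix entries proportional to $\sin(\pi\mu\nu/k) = (\zeta_{2k}^{\mu\nu}-\zeta_{2k}^{-\mu\nu})/(2\iu)$, twists $\theta_\mu = \zeta_{4k}^{\mu^2-1}$, and quantum dimensions proportional to $\zeta_{2k}^\mu - \zeta_{2k}^{-\mu}$. Because $\Gamma$ is a tree, the colored bracket of $L_\Gamma$ factorises as a product of twists $\theta_{\mu_v}^{w_v}$ at each vertex and Hopf factors on each edge. Regrouping the normalisations at each vertex produces the exponent $2-\deg(v)$ of $(\zeta_{2k}^{\mu_v}-\zeta_{2k}^{-\mu_v})$; combining the diagonal $\theta_{\mu_v}^{w_v}$ with the off-diagonal Hopf phases $\zeta_{2k}^{\pm \mu_v \mu_{v'}}$ reconstructs the quadratic form ${}^t\!\mu W \mu$ in the exponent, up to a global shift $\zeta_{4k}^{-\sum_v w_v}$ from the $-1$ in $\mu^2 - 1$.

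Finally I would symmetrise and normalise. Invariance of the summand (up to tracked signs) under $\mu_v \mapsto -\mu_v$ and $\mu_v \mapsto \mu_v + 2k$ extends the summation range from $\{1,\ldots,k-1\}^V$ to $(\Z \smallsetminus k\Z)^V/2k\Z^V$ at the cost of a factor $2^{\abs{V}}$, which is absorbed into $\sqrt{2k}^{\abs{V}}$. The overall normalisation $\WRT_k(S^3)=1$ together with the Atiyah correction, contributed by $\sigma(W) = -\abs{V}$ copies of the $-1$-framed unknot Gauss sum evaluating to $\sqrt{2k}\,\bm{e}(-1/8)\zeta_{4k}^{-3}$ per component, then combines with the previous phases to yield the stated prefactor $\bm{e}(\abs{V}/8)\zeta_{4k}^{-\sum_v(w_v+3)}/(2\sqrt{2k}^{\abs{V}}(\zeta_{2k}-\zeta_{2k}^{-1}))$.

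The main obstacle is the careful bookkeeping of phase and normalisation factors: the signature phase, the $-3$ shift per vertex in $\zeta_{4k}^{-(w_v+3)}$ (arising from the $\theta_\mu-1$ twist convention together with the $U_-$ Gauss sum), the telescoping of $(\zeta_{2k}^{\mu_v}-\zeta_{2k}^{-\mu_v})$ powers over the tree that collapses to $2-\deg(v)$, and the symmetrisation step extending the summation range. The underlying algebra is routine once conventions are fixed; the subtlety lies purely in reconciling modular-tensor-category conventions with the $q$-series normalisation used in the rest of the paper.
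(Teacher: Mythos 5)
The paper gives no proof of this proposition at all: it is taken verbatim as an external input, cited as \cite[Equation A.12]{GPPV}, and serves only as the starting point for the manipulations in \cref{prop:WRT_expression}. Your proposal therefore does something genuinely different, namely it reconstructs the derivation that Gukov--Pei--Putrov--Vafa carry out in their Appendix A: the Reshetikhin--Turaev surgery formula applied to the plumbing link, factorisation of the coloured bracket over the tree using only the $SU(2)_k$ $S$- and $T$-matrix data, reassembly of the twists and Hopf phases into $\zeta_{4k}^{{}^t\!\mu W\mu}$, the exponent count $1+1-\deg(v)=2-\deg(v)$ at each vertex, and the symmetrisation of the colour range from $\{1,\dots,k-1\}^V$ to $(\Z\smallsetminus k\Z)^V/2k\Z^V$. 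This outline is correct in structure and identifies all the ingredients of the prefactor (signature phase from $\sigma(W)=-\abs{V}$, the $-3$ shift per vertex from $\theta_\mu=\zeta_{4k}^{\mu^2-1}$ combined with the $U_-$ Gauss sum, the residual single power of $(\zeta_{2k}-\zeta_{2k}^{-1})^{-1}$ from the edge/vertex denominator count on a tree). Two points would need genuine care if you were to write it out in full: the symmetrisation step is not a naive invariance of the summand under $\mu_v\mapsto-\mu_v$ (the factor $(\zeta_{2k}^{\mu_v}-\zeta_{2k}^{-\mu_v})^{2-\deg(v)}$ picks up $(-1)^{\deg(v)}$ and the off-diagonal Hopf phases change), so one must symmetrise edge by edge and absorb the resulting signs using the tree structure; and the claim that the factor $2^{\abs{V}}$ is ``absorbed into $\sqrt{2k}^{\abs{V}}$'' is loose --- the exact bookkeeping of $2$'s against the $F(U_-)^{-\abs{V}}$ normalisation is where convention mismatches typically hide. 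Since the paper treats the formula as a citation, your derivation adds self-containedness at the cost of exactly this convention-reconciliation work, which you correctly flag as the main obstacle but do not carry out.
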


We can calculate WRT invariants as follows.

\begin{prop} \label{prop:WRT_expression}
	\begin{align}
		\WRT_k(M(\Gamma))
		= \,
		&\frac{(-1)^{\abs{V_1}} \bm{e}(\abs{V_{\ge 2}}/8) \zeta_{4k}^{-\sum_{v \in V} (w_{v} + 3) - \sum_{i \in V_1} 1/w_i}}
		{2 \sqrt{2k}^{\abs{V_{\ge 2}}} (\zeta_{2k} - \zeta_{2k}^{-1}) \prod_{i \in V_1} \sqrt{\abs{w_i}}}
		\\
		&\sum_{\mu \in (\Z \smallsetminus k\Z)^{V_{\ge 2}}/2k S(\Z^{V_{\ge 2}})}
		\bm{e} \left( -\frac{1}{4k} {}^t\!\mu S^{-1} \mu \right)
		\prod_{v \in V_{\ge 2}} G_v \left( \zeta_{2kM_v}^{\mu_v} \right).
	\end{align}
\end{prop}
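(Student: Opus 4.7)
My plan is to sum out the leaf variables $\mu_i$ ($i \in V_1$) from the starting formula by evaluating $|V_1|$ one-variable Gauss sums, use the block decomposition of $W$ to rewrite the quadratic form in terms of $S^{-1}$, and then carefully regroup the resulting factors into $\prod_v G_v$.

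First, since $\deg(i) = 1$ for each leaf $i \in V_1$, the factor $\zeta_{2k}^{\mu_i}-\zeta_{2k}^{-\mu_i}$ vanishes at $\mu_i \in k\Z$, so I may drop the restriction $\mu_i \notin k\Z$ and let $\mu_i$ range over all of $\Z/2k\Z$. Next, splitting $\mu = (\mu_{V_1}, \mu_{V_{\ge 2}})$ and applying \cref{lem:block_matrix} to the block form of $W$---together with \cref{prop:S}\cref{item:prop:S3}, which identifies the Schur complement $W_{\ge 2} - W_{\ge 2,1} W_1^{-1} W_{1, \ge 2}$ with $-S^{-1}$---gives
\[
{}^t\!\mu W \mu = \sum_{i \in V_1} w_i(\mu_i + \mu_{v_i}/w_i)^2 - {}^t\!\mu_{V_{\ge 2}}\, S^{-1}\mu_{V_{\ge 2}},
\]
where $v_i \in V_{\ge 2}$ denotes the unique neighbour of $i$ and the formula uses that $W_1 = \diag(w_i)$ so that $(T\mu_{V_{\ge 2}})_i = -\mu_{v_i}/w_i$. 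The sum over $\mu_{V_1}$ then factors as a product over $V_1$ of single-variable Gauss sums.

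The heart of the calculation is to evaluate, for each leaf $i$, the shifted quadratic Gauss sum
\[
\Sigma_i(\mu_{v_i}) := \sum_{\mu_i \in \Z/2k\Z}\zeta_{4k}^{w_i(\mu_i+\mu_{v_i}/w_i)^2}\bigl(\zeta_{2k}^{\mu_i}-\zeta_{2k}^{-\mu_i}\bigr)
\]
and to combine these Gauss sums with the leftover factors $(\zeta_{2k}^{\mu_v}-\zeta_{2k}^{-\mu_v})^{2-\deg(v)}$ and the quadratic exponential $\bm{e}(-{}^t\!\mu_{V_{\ge 2}} S^{-1}\mu_{V_{\ge 2}}/(4k))$ into the product $\prod_v G_v(\zeta_{2kM_v}^{\mu_v})$. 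Expanding $\zeta_{2k}^{\mu_i}-\zeta_{2k}^{-\mu_i} = \zeta_{4k}^{2\mu_i}-\zeta_{4k}^{-2\mu_i}$ splits $\Sigma_i$ into a difference of two shifted quadratic Gauss sums; applying the Landsberg--Schaar quadratic-reciprocity formula to each (using that $w_i < 0$ since $W$ is negative definite) reduces the sum over $\Z/2k\Z$ to a short sum of length $|w_i|$, producing a prefactor of magnitude $\sqrt{2k/|w_i|}$ and explicit $\mu_{v_i}$-dependent phases. Assembling the short sums and phases, and comparing against the factorisation $G_v(\zeta_{2kM_v}^{\mu_v}) = (\zeta_{2k}^{\mu_v}-\zeta_{2k}^{-\mu_v})^{2-\deg v}\prod_{i\in\overline{v}}(\zeta_{2kw_i}^{\mu_v}-\zeta_{2kw_i}^{-\mu_v})$ coming from the definition of $G_v$ in \cref{subsec:rat_func}, one obtains exactly the announced prefactors---$(-1)^{|V_1|}$, $\bm{e}(|V_{\ge 2}|/8)/\bm{e}(|V|/8)$, $\zeta_{4k}^{-\sum_{i \in V_1}1/w_i}$, and $\sqrt{2k}^{|V_1|}/\prod_i\sqrt{|w_i|}$---together with the product of $G_v$'s.

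Finally, I reconcile the summation domain. The summand $\bm{e}(-{}^t\!\mu S^{-1}\mu/(4k))\prod_v G_v(\zeta_{2kM_v}^{\mu_v})$ is periodic modulo the finer sublattice $2kS(\Z^{V_{\ge 2}})\subset 2k\Z^{V_{\ge 2}}$---this uses \cref{prop:S}\cref{item:prop:S4},\cref{item:prop:S5} to bound the denominators of $S^{-1}$ against the $M_v$-periods of the $G_v$'s---which makes $(\Z\setminus k\Z)^{V_{\ge 2}}/2kS(\Z^{V_{\ge 2}})$ the natural summation domain as in the statement. I expect the main obstacle to be the careful bookkeeping of the phases in the Landsberg--Schaar evaluation (whose signs depend on $\sgn(w_i)$ and parity), together with the consistent reconciliation of the enlarged summation lattice with the short Gauss sums extracted from each $\Sigma_i$: the $|\det S| = \prod_i|w_i|$ index enlargement of the summation must match precisely the $\prod_i\sqrt{|w_i|}$ factors arising from Landsberg--Schaar, and the $\mu_{v_i}$-dependent phases released by each $\Sigma_i$ must combine cleanly with the $\bm{e}(-{}^t\!\mu S^{-1}\mu/(4k))$ term and the short-sum values to reproduce the full factors $\zeta_{2kw_i}^{\mu_v}-\zeta_{2kw_i}^{-\mu_v}$ inside each $G_v$.
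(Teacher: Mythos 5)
Your plan follows the paper's own proof in all essentials: drop the $\mu_i \notin k\Z$ restriction at the leaves, expand each leaf factor as a difference of two exponentials, evaluate the resulting $|V_1|$ shifted quadratic Gauss sums by reciprocity (the paper uses the Deloup--Turaev form, \cref{prop:reciprocity}, rather than Landsberg--Schaar, but in one variable these are the same statement), identify the quadratic form on $\Z^{V_{\ge 2}}$ with $-S^{-1}$ via the Schur complement of \cref{prop:S}, and regroup into $\prod_v G_v$. The one step you flag but do not carry out is exactly how the paper closes the bookkeeping: the length-$|w_i|$ short sums produced by reciprocity are merged over $i \in \overline{v}$ into a single sum over $\Z/M_v\Z$ by the Chinese remainder theorem (valid by \cref{lem:coprime}), and the substitution $2k\mu'_v + \mu_v \mapsto \mu_v$ converts these extra $\prod_i |w_i|$ terms into precisely the index enlargement from $2k\Z^{V_{\ge 2}}$ to $2kS(\Z^{V_{\ge 2}})$ via \cref{prop:S} \cref{item:prop:S5}, so the $\sqrt{|w_i|}$ normalisations survive only in the prefactor, as in the statement.
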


To prove this, we need the following property called ``reciprocity of Gauss sums.''

\begin{prop}[{\cite[Theorem 1]{DT}}] \label{prop:reciprocity}
	Let $ L $ be a lattice of finite rank $ n $ equipped with a non-degenerated symmetric $ \Z $-valued bilinear form $ \sprod{\cdot, \cdot} $.
	We write
	\[
	L' \coloneqq \{ y \in L \otimes \R \mid \sprod{x, y} \in \Z \text{ for all } x \in L \} 
	\]
	for the dual lattice.
	Let $ 0 < k \in \abs{L'/L} \Z, u \in \frac{1}{k} L $, 
	and $ h \colon L \otimes \R \to L \otimes \R $ be a self-adjoint automorphism such that $ h(L') \subset L' $ and $ \frac{k}{2} \sprod{y, h(y)} \in \Z $ for all $ y \in L' $.
	Let $ \sigma $ be the signature of the quadratic form $ \sprod{x, h(y)} $.
	Then it holds that
	\begin{align}
		&\sum_{x \in L/kL} \bm{e} \left( \frac{1}{2k} \sprod{x, h(x)} + \sprod{x, u} \right)
		= \,
		&\frac{\bm{e}(\sigma/8) k^{n/2}}{\sqrt{\abs{L'/L} \abs{\det h}}}
		\sum_{y \in L'/h(L')} \bm{e} \left( -\frac{k}{2} \sprod{y + u, h^{-1}(y + u)} \right).
	\end{align}
\end{prop}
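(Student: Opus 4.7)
The strategy is to prove the identity by Poisson summation applied to a Gaussian-regularised sum over $L$, with the phase $\bm{e}(\sigma/8)$ arising from the Fresnel integral of the indefinite quadratic form $\sprod{y, h(y)}/k$.

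First I would verify that the summand on the left-hand side is well-defined on $L/kL$. For any $v \in L$ the quadratic increment
\[
\frac{1}{2k}\sprod{x+kv, h(x+kv)} - \frac{1}{2k}\sprod{x, h(x)} = \sprod{x, h(v)} + \frac{k}{2}\sprod{v, h(v)}
\]
lies in $\Z$: the first term because $h(v) \in h(L) \subset h(L') \subset L'$ and $x \in L$, and the second by the standing hypothesis $\frac{k}{2}\sprod{y, h(y)} \in \Z$ for $y \in L'$. The linear increment $\sprod{x+kv, u} - \sprod{x, u} = k\sprod{v, u}$ lies in $\Z$ since $ku \in L$.

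The periodic extension of the summand is a well-defined function $\tilde{g}$ on $L$, and the left-hand side can be read off from the $\epsilon \to 0^+$ limit of the Gaussian-regularised sum $\sum_{x \in L} \tilde{g}(x) e^{-\epsilon B_0(x,x)}$, where $B_0$ is any fixed positive-definite reference form on $V := L \otimes \R$. Poisson summation on the lattice $L \subset V$ converts this into a sum over the dual lattice $L'$, and the key Fresnel evaluation
\[
\lim_{\epsilon \to 0^+} \int_V \bm{e}\!\left( \frac{1}{2k}\sprod{y, h(y)} - \sprod{y, \eta} \right) e^{-\epsilon B_0(y, y)} \, dy = \frac{\bm{e}(\sigma/8)\, k^{n/2}}{\sqrt{\abs{\det h}}}\, \bm{e}\!\left( -\frac{k}{2}\sprod{\eta, h^{-1}(\eta)} \right),
\]
obtained by simultaneously diagonalising $h$ with $B_0$ and reducing to one-dimensional integrals $\int_\R e^{\pm\pi i x^2} e^{-\epsilon x^2} dx = e^{\pm \pi i/4}$, supplies both the phase $\bm{e}(\sigma/8)$ and the dual Gaussian. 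Completing the square absorbs the linear term $u$ into the argument $-\frac{k}{2}\sprod{\eta + u, h^{-1}(\eta + u)}$ of the dual sum over $\eta \in L'$.

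To collapse this sum over $\eta \in L'$ into the stated sum over $L'/h(L')$, I would use that the summand is invariant under $\eta \mapsto \eta + h(w)$ for $w \in L'$; expanding, this reduces to checking $k\sprod{\eta - u, w} + \frac{k}{2}\sprod{w, h(w)} \in \Z$, which holds because the hypothesis $k \in \abs{L'/L}\Z$ forces $kL' \subset L$, so $k\sprod{L', L'} \subset \Z$, while the integrality $\frac{k}{2}\sprod{w, h(w)} \in \Z$ is given. The volume factors from Poisson summation, together with the index identity $\abs{L'/h(L')} = \abs{\det h}$ (from $h$ being an $\R$-linear automorphism), combine with the Fresnel prefactor $k^{n/2}/\sqrt{\abs{\det h}}$ to yield the stated constant $\bm{e}(\sigma/8)\, k^{n/2}/\sqrt{\abs{L'/L}\abs{\det h}}$. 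The main obstacle is pinning down the signature phase $\bm{e}(\sigma/8)$ as opposed to its conjugate: because $h$ is indefinite the Fresnel integral is only conditionally convergent, and the regularisation must be carried out consistently across the positive and negative eigendirections of $h$ to ensure that the product of one-dimensional phases assembles to $\bm{e}(\sigma/8)$.
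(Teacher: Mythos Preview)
The paper does not prove this proposition; it is quoted verbatim from \cite[Theorem 1]{DT} and used as a black box in the proofs of \cref{prop:WRT_expression} and \cref{lem:F(f;t)}. There is therefore no in-paper argument to compare your proposal against.

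That said, your outline via Gaussian-regularised Poisson summation is the standard route to such reciprocity formulas and is essentially the method of the cited reference. One point in your sketch deserves care: the passage from the infinite dual sum over $\eta \in L'$ to the finite sum over $L'/h(L')$ cannot be done simply by ``collapsing'' after taking $\epsilon \to 0^+$, since the limiting summand has modulus $1$ and the sum diverges. The correct mechanism is that for $\epsilon > 0$ the dual sum is a theta-like series whose $\epsilon \to 0^+$ asymptotics are governed by the stationary-phase (Fresnel) contribution, and the finite sum over $L'/h(L')$ emerges together with the prefactor in one step rather than two. Equivalently, one can avoid regularisation entirely by writing the left-hand side as $\sum_{x \in L/kL}$ of a genuinely periodic function and applying the finite Poisson summation (Fourier inversion on the finite group $L/kL$ paired with $\tfrac{1}{k}L'/L'$), which is closer to how \cite{DT} proceeds. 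Your bookkeeping of the constants and the well-definedness checks are correct.
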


\begin{proof}[Proof of $ \cref{prop:WRT_expression} $]
	Idea of our proof is the same as in \cite[Proposition 6.1]{MM} which deal with the case when $ \Gamma $ is the H-graph.
	However, our calculation is slightly different and clearer than it.
	To begin with, since $ \zeta_{2k}^{\mu_v} - \zeta_{2k}^{-\mu_v} = 0 $ for $ \mu_v \in k\Z $, we can write
	\begin{equation} \label{eq:WRT_Gauss_sum} 
		\begin{aligned}
			&\sum_{\mu \in (\Z \smallsetminus k\Z)^V/2k\Z^V} \zeta_{4k}^{{}^t\!\mu W \mu}
			\prod_{v \in V} \left( \zeta_{2k}^{\mu_v} - \zeta_{2k}^{-\mu_v} \right)^{2-\deg(v)} \\
			= \,
			&\sum_{\mu \in (\Z/ 2k \Z)^{V_1} \oplus \left( (\Z \smallsetminus k\Z) / 2k\Z \right)^{V_{ \ge 2 }}} 
			\zeta_{4k}^{{}^t\!\mu W \mu}
			\prod_{v \in V} \left( \zeta_{2k}^{\mu_v} - \zeta_{2k}^{-\mu_v} \right)^{2-\deg(v)}.
		\end{aligned}
	\end{equation}
	Since
	\begin{align}
		\prod_{i \in V_1} \left( \zeta_{2k}^{\mu_i} - \zeta_{2k}^{-\mu_i} \right)
		&= 
		\prod_{i \in V_1} \sum_{l_i \in \{ \pm 1 \} } l_i \zeta_{2k}^{l_i \mu_i}
	\end{align}
	and for $ \mu = (\mu_1, \mu_{\ge 2}) \in \Z^{V}, \mu_1 \in \Z^{V_1}, \mu_{\ge 2} \in \Z^{V_{\ge 2}} $ it holds that
	\[
	{}^t\!\mu W \mu
	= {}^t\!\mu_{\ge 2} W \mu_{\ge 2} + \sum_{v \in V_{\ge 2}} \sum_{i \in \overline{v}} \left( w_i \mu_i^2 + 2 \mu_v \mu_i \right),
	\]
	the right hand side of \cref{eq:WRT_Gauss_sum} can be written as
	\begin{align}
		\sum_{\mu_{\ge 2} \in \left( (\Z \smallsetminus k\Z) / 2k\Z \right)^{V_{ \ge 2 }}} 
		\zeta_{4k}^{{}^t\!\mu_{\ge 2} W \mu_{\ge 2}}
		\prod_{v \in V_{\ge 2}}
		&\left( \zeta_{2k}^{\mu_v} - \zeta_{2k}^{-\mu_v} \right)^{2-\deg(v)}
		\\
		&\prod_{i \in \overline{v}} \sum_{l_i \in \{ \pm 1 \} } l_i
		\sum_{\mu_i \in \Z/ 2k \Z}
		\zeta_{4k}^{w_i \mu_i^2 + 2 (\mu_v + l_i) \mu_i}.
	\end{align}
	Since the last sum for $ \mu_i $ is equal to
	\[	
	\frac{\bm{e}(-1/8) \sqrt{2k}}{\sqrt{\abs{w_i}}}
	\sum_{\mu_i \in \Z/ w_i \Z}
	\zeta_{4k w_i}^{ -\left( 2k \mu_i + \mu_v + l_i \right)^2 }
	\]
	by reciprocity of Gauss sums (\cref{prop:reciprocity}), the right hand side of \cref{eq:WRT_Gauss_sum} can be written as
	\begin{equation} \label{eq:WRT_Gauss_sum2}
		\begin{aligned}
			\frac{\bm{e}(-\abs{V_1}/8) \sqrt{2k}^{\abs{V_1}}}{\prod_{i \in V_1} \sqrt{\abs{w_i}}}
			\sum_{\mu_{\ge 2} \in \left( (\Z \smallsetminus k\Z) / 2k\Z \right)^{V_{ \ge 2 }}} 
			\zeta_{4k}^{{}^t\!\mu_{\ge 2} W \mu_{\ge 2}}
			\prod_{v \in V_{\ge 2}}
			&\left( \zeta_{2k}^{\mu_v} - \zeta_{2k}^{-\mu_v} \right)^{2-\deg(v)}
			\\
			&\prod_{i \in \overline{v}} \sum_{l_i \in \{ \pm 1 \} } l_i
			\sum_{\mu_i \in \Z/ w_i \Z}
			\zeta_{4k w_i}^{ -\left( 2k \mu_i + \mu_v + l_i \right)^2 }.
		\end{aligned}
	\end{equation}	
	Since $ \bigoplus_{i \in \overline{v}} \Z/ w_i \Z \cong \Z / M_v \Z $ by the Chinese remainder theorem and \cref{lem:coprime}, the last line in \cref{eq:WRT_Gauss_sum2} is equal to
	\[
	\sum_{\mu'_v \in \Z/ M_v \Z}
	\prod_{i \in \overline{v}}
	\sum_{l_i \in \{ \pm 1 \} } l_i
	\zeta_{4k w_i}^{ -\left( 2k \mu'_v + \mu_v + l_i \right)^2 }.
	\]
	By replacing $ 2k \mu'_v + \mu_v $ by $ \mu_v $ and using \cref{prop:S} \cref{item:prop:S5}, the sum for $ \mu_{\ge 2} $ in \cref{eq:WRT_Gauss_sum2} is written as
	\begin{align}
		&\sum_{\mu_{\ge 2} \in (\Z \smallsetminus k\Z)^{V_{ \ge 2 }} / 2k S \left( \Z^{V_{ \ge 2 }} \right)} 
		\zeta_{4k}^{{}^t\!\mu_{\ge 2} W \mu_{\ge 2}} 
		\prod_{v \in V_{\ge 2}}
		\left( \zeta_{2k}^{\mu_v} - \zeta_{2k}^{-\mu_v} \right)^{2-\deg(v)} 
		\prod_{i \in \overline{v}} \sum_{l_i \in \{ \pm 1 \} } l_i
		\zeta_{4k w_i}^{ -\left( \mu_v + l_i \right)^2 } 
		\\
		= \,
		&\sum_{\mu_{\ge 2} \in (\Z \smallsetminus k\Z)^{V_{ \ge 2 }} / 2k S \left( \Z^{V_{ \ge 2 }} \right)} 
		\bm{e} \left( 
		\frac{1}{4k} \left( 
		{}^t\!\mu_{\ge 2} W \mu_{\ge 2}
		- \sum_{v \in V_{\ge 2}} \sum_{i \in \overline{v}} \frac{1}{w_i} \mu_v^2
		\right) \right)
		\\
		&\prod_{v \in V_{\ge 2}}
		\left( \zeta_{2k}^{\mu_v} - \zeta_{2k}^{-\mu_v} \right)^{2-\deg(v)}
		\prod_{i \in \overline{v}} \sum_{l_i \in \{ \pm 1 \} } l_i
		\zeta_{4k w_i}^{ -2 l_i \mu_v - l_i^2 }
		\\
		= \,
		&\zeta_{4k}^{- \sum_{i \in V_1} 1/w_i}
		\sum_{\mu_{\ge 2} \in (\Z \smallsetminus k\Z)^{V_{ \ge 2 }} / 2k S \left( \Z^{V_{ \ge 2 }} \right)} 
		\bm{e} \left( 
		\frac{1}{4k} \left( 
		{}^t\!\mu_{\ge 2} W \mu_{\ge 2}
		- \sum_{v \in V_{\ge 2}} \sum_{i \in \overline{v}} \frac{1}{w_i} \mu_v^2
		\right) \right) \\
		&\prod_{v \in V_{\ge 2}} (-1)^{\abs{\overline{v}}} G_v \left( \zeta_{2kM_v}^{\mu_v} \right).
	\end{align}
	By \cref{prop:S} \cref{item:prop:S3}, it holds that
	\[
	{}^t\!\mu_{\ge 2} W \mu_{\ge 2}
	- \sum_{v \in V_{\ge 2}} \sum_{ i \in \overline{v} } \frac{1}{w_i} \mu_v^2
	=
	-{}^t\!\mu_{\ge 2} S^{-1} \mu_{\ge 2}.
	\]
	Thus, we obtain the claim.
\end{proof}

% --------------------------------------------------------------------------

\section{Expressions of homological blocks as false theta functions} \label{sec:HB_false}

% --------------------------------------------------------------------------

In this section, we represent the homological block of $ M(\Gamma) $ as a false theta function.
%Our main result in this section is \cref{prop:HB_false}. 

For the plumbed homology sphere $ M(\Gamma) $, the homological block $ \widehat{Z}_{\Gamma} (q) $ is defined as follows.

\begin{dfn}[{\cite[Subsection 3.4]{GPPV}}]
	The homological block of the plumbed homology sphere $ M(\Gamma) $ is defined as
	\[
	\widehat{Z}_{\Gamma} (q) 
	= 
	2^{-\abs{V}} q^{-\sum_{v \in V} (w_{v} + 3)/4} 
	\mathrm{p.v.} \int_{\abs{z_{v}}=1, v \in V} \Theta_{-W,\delta }(q; z)
	\prod_{v \in V} (z_{v} - 1/z_{v})^{2 - \deg(v)} \frac{dz_{v}}{2\pi\sqrt{-1}z_{v}},
	\]
	where $ \mathrm{p.v.} $ is the Cauchy principal value defined as
	\begin{align}
		\mathrm{p.v.} \int_{\abs{z} = 1}
		\coloneqq \lim_{\veps \to +0} \left( \int_{\abs{z} = 1+\veps} + \int_{\abs{z} = 1-\veps} \right),
	\end{align}
	$ \delta \coloneqq (\deg(v))_{v \in V} \in \Z^V $, and
	\[
	\Theta_{-W, \delta} (q; z) \coloneqq 
	\sum_{l \in 2\Z^V + \delta} q^{-{}^t\!l W^{-1} l/4} \prod_{v \in V} z_v^{l_v}
	\]
	is the theta function. 
\end{dfn}

Here we remark that our definition of the Cauchy principal value is half of the previous definition in \cite{GPPV} (\cite[p.55]{GPPV}).
However, our definition of the homological block $ \widehat{Z}_{\Gamma} (q) $ is the same as \cite{GPPV} since we multiply it by $ 2^{-\abs{V}} $.

Our main result in this section is the following.

\begin{prop} \label{prop:HB_false}
	It holds
	\begin{align}
		\widehat{Z}_{\Gamma} (q) 
		= \,
		&(-1)^{\abs{V_1}} 2^{-\abs{V_{\ge 3}}} q^{-(\sum_{v \in V} (w_{v} + 3) + \sum_{i \in V_1} 1/w_i)/4} \\
		&\sum_{e \in \{ \pm 1 \}^{V_{\ge 3}}} \left( \prod_{v \in V_{\ge 3}} e_v^{\deg (v) + \abs{\overline{v}}} \right)
		\sum_{\alpha \in \calS} \veps(\alpha)
		\sum_{n \in \Z^{V_{\ge 3}}} P(n) q^{Q(e(n + \alpha))},
	\end{align}
	where $ P(n) $ is the polynomial defined in \cref{eq:prod_vertices},
	$ Q(x) \coloneqq {}^t\!x S x $ and $ ex \coloneqq ((x_v)_{v \in V_{2}}, (e_v x_v)_{v \in V_{\ge 3}}) $ 
	for $ x \in \Z^{V_{\ge 2}} $ and $ e \in \{ \pm 1 \}^{V_{\ge 3}} $. 
\end{prop}

To prove this proposition, we need the following lemma. 

\begin{lem}[{\cite[p. 743]{Andersen-Mistegard}}] \label{lem:Cauchy_principal_value}
	For $ d \in \Z_{>0} $ and $ l \in \Z $, it holds that
	\[
	\mathrm{p.v.} \int_{\abs{z}=1} \frac{z^{l-1}}{(z - 1/z)^{d - 2}} \frac{dz}{2\pi\sqrt{-1}}
	=
	\begin{dcases}
		-2l & d=1, l \in \{ \pm 1 \}, \\
		2 & d=2, l=0, \\
		\sgn(l)^d \binom{m+d-3}{d-3} & d \ge 3, \pm l \in d - 2 + 2m \text{ for some } m \in \Z_{\ge 0}, \\
		0 & \text{ otherwise.}
	\end{dcases}
	\]
\end{lem}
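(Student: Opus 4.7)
The plan is to split the Cauchy principal value as $I_+ + I_-$, where
\[
I_\pm = \int_{|z| = 1 \pm \varepsilon} \frac{z^{l-1}}{(z - 1/z)^{d-2}} \frac{dz}{2\pi\sqrt{-1}},
\]
and compute each as the coefficient of $z^{-1}$ in the appropriate Laurent expansion. After multiplying top and bottom by $z^{d-2}$ the integrand becomes the rational function $f(z) = z^{l + d - 3}/(z^2 - 1)^{d-2}$, whose only poles sit at $z = 0$ and $z = \pm 1$. Termwise integration then identifies $I_+$ with the coefficient of $z^{-1}$ in the Laurent series of $f$ on $\{ 1 < |z| \}$ and $I_-$ with that on $\{ 0 < |z| < 1 \}$.

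First I would dispose of the degenerate cases $d \in \{1,2\}$ directly. For $d = 1$ the integrand is the Laurent polynomial $z^l - z^{l-2}$, so $I_+ = I_-$ and the principal value is twice the residue at $0$, yielding $-2l$ precisely when $l \in \{ \pm 1 \}$. For $d = 2$ the integrand is $z^{l-1}$, giving $2$ exactly at $l = 0$.

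For $d \ge 3$ I would apply the binomial series $(1-w)^{-(d-2)} = \sum_{k \ge 0} \binom{k + d - 3}{k} w^k$ in each annulus. On $|z| > 1$, the factorisation $(z - 1/z)^{-(d-2)} = z^{-(d-2)} (1 - z^{-2})^{-(d-2)}$ produces
\[
f(z) = \sum_{k \ge 0} \binom{k + d - 3}{k} z^{l - d + 1 - 2k},
\]
so a $z^{-1}$ term appears iff $l \equiv d \pmod 2$ and $k_0 := (l - d + 2)/2 \ge 0$, giving $I_+ = \binom{(l+d)/2 - 2}{(l-d)/2 + 1}$. Symmetrically, on $|z| < 1$ the factorisation $(z - 1/z)^{-(d-2)} = (-1)^{d-2} z^{d-2} (1 - z^2)^{-(d-2)}$ contributes to $I_-$ exactly when $l \equiv d \pmod 2$ and $l \le -(d-2)$. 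If $l \not\equiv d \pmod 2$ neither expansion has a $z^{-1}$ term, which accounts for the final ``otherwise $0$'' case.

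The last step is to merge these two mutually exclusive contributions into the single closed form $\sgn(l - d + 2)\binom{(l+d)/2 - 2}{(l-d)/2 + 1}$. The main obstacle I anticipate is precisely this bookkeeping: the two branches $l \ge d - 2$ and $l \le -(d-2)$ must be unified into one expression, and the boundary case $l = -(d-2)$, where the formal top entry of the binomial equals $-1$, must be handled via the paper's convention $\binom{-1}{\cdot} = 1$ in a way that correctly reproduces the $(-1)^{d-2}$ sign coming from the $|z| < 1$ expansion.
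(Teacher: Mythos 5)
Your decomposition of the principal value into $I_+ + I_-$, each identified with the coefficient of $z^{-1}$ in the Laurent expansion of $z^{l+d-3}(z^2-1)^{2-d}$ on $\{\abs{z}>1\}$ and on $\{0<\abs{z}<1\}$ respectively, is sound and is a genuinely different, more self-contained route than the paper's: the paper writes the principal value as $2\Res_{z=0}+\Res_{z=1}+\Res_{z=-1}$ and, for $d\ge 3$, reduces each residue to a residue of $z^{l-d+2}/(z-z^{-1})$ by citing an external identity before summing the three contributions. Your treatment of $d=1,2$, your value $I_+=\binom{(l+d)/2-2}{(l-d)/2+1}$ for $l\equiv d\bmod 2$, $l\ge d-2$ (and $I_+=0$ otherwise), and your criterion that $I_-$ is nonzero exactly for $l\equiv d\bmod 2$, $l\le 2-d$, are all correct.

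The step you defer, merging the two branches into the displayed closed form, is however not mere bookkeeping, and it cannot be completed with the convention \cref{eq:binomial}: this is the genuine gap. Completing your $\abs{z}<1$ expansion gives, for $l\le 2-d$,
\[
I_-=(-1)^d\binom{(d-l)/2-2}{(2-d-l)/2}=(-1)^d\binom{(d-l)/2-2}{d-3},
\]
a nonzero ordinary binomial coefficient for every such $l$, whereas the right-hand side of the lemma read via \cref{eq:binomial} equals $-1$ at $l=2-d$ and $0$ for all $l<2-d$. Concretely, for $d=3$, $l=-3$ the principal value is $-1$ but $\sgn(-4)\binom{-2}{-2}=0$ under \cref{eq:binomial}; for $d=4$, $l=-2$ the principal value is $+1$ but the formula gives $\sgn(-4)\binom{-1}{-2}=-1$. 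The two branches do merge if and only if $\binom{(l+d)/2-2}{(l-d)/2+1}$ is read as the polynomial $\frac{((l+d)/2-2)((l+d)/2-3)\cdots((l-d)/2+2)}{(d-3)!}$, i.e.\ as the factor of $P(n)$ from \cref{eq:prod_vertices} evaluated at $n=(l-d+2)/2$; this is also the reading under which the reduction identity cited in the paper's own proof holds for all $l$ and under which \cref{prop:HB_false} comes out correct. So your instinct that the boundary and sign issues are the crux is right, but to finish you must either adopt that polynomial reading of the binomial coefficient or state the $l\le 2-d$ branch separately with the value $(-1)^d\binom{(d-l)/2-2}{d-3}$; under \cref{eq:binomial} as literally stated the asserted identity fails in that range.
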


\begin{proof}[Proof of $ \cref{prop:HB_false} $]
	By definition, we have
	\begin{align}
		&2^{\abs{V}} q^{\sum_{v \in V} (w_{v} + 3)/4}
		\widehat{Z}_{\Gamma} (q) \\
		= \,
		&\sum_{l \in 2\Z^V + \delta} q^{-{}^t\!l W^{-1} l/4} 
		\prod_{v \in V}
		\mathrm{p.v.} \int_{\abs{z_v}=1}
		\frac{ z_v^{l_v - 1} }{(z_v - 1/z_v)^{\deg(v) - 2}} \frac{dz_v}{2\pi\sqrt{-1}}.
	\end{align}
	By \cref{lem:Cauchy_principal_value}, this is equal to
	\begin{align}
		\sum_{\substack{
				l = (l_1, l_2, l_{\ge 3}), \\
				l_1 \in \{ \pm 1 \}^{V_1}, \,
				l_2 = 0, \\
				\pm l_{\ge 3} \in 2\Z_{\ge -1}^{V_{\ge 3}} + \delta_{\ge 3}
		}}
		&q^{-{}^t\!l W^{-1} l/4} 
		\left( \prod_{i \in V_1} (-2l_i) \right)  2^{\abs{V_2}} \\
		&\prod_{v \in V_{\ge 3}} \sgn(l_v)^{\deg (v)} \binom{(\abs{l_v} - \deg(v) + 2)/2 + \deg(v)-3}{\deg(v)-3},
	\end{align}
	where $ \delta_{\ge 3} \coloneqq (\deg(v))_{v \in V_{\ge 3}} \in \Z^{V_{\ge 3}} $.
	For $ l = {}^t\!(\varepsilon_1, l_2, l_{\ge 3}) $ with $ l_2 = 0 $, we have
	\[
	-{}^t\!l W^{-1} l
	= Q( l_{\ge 3} + {}^t\!T l_1) - \sum_{i \in V_1} \frac{l_i^2}{w_i}
	\]
	by \cref{prop:S} \cref{item:prop:S1}.
	By letting
	\[
	l_i = e_v \varepsilon_i, \quad
	l_v = e_v (n_v + \deg(v) - 2), \quad
	e_v, \varepsilon_i \in \{ \pm 1 \}, \quad
	n_v \in \Z_{\ge 0}
	\]
	for $ v \in V_{\ge 3} $ and $ i \in \overline{v} $, we obtain
	\begin{align}
		&(-1)^{\abs{V_1}} 2^{\abs{V_{\ge 3}}} q^{(\sum_{v \in V} (w_{v} + 3) + \sum_{i \in V_1} 1/w_i)/4}
		\widehat{Z}_{\Gamma} (q) \\
		= \,
		&\sum_{e \in \{ \pm 1 \}^{V_{\ge 3}}} \left( \prod_{v \in V_{\ge 3}} e_v^{\deg (v)} \right)
		\sum_{\varepsilon \in \{ \pm 1 \}^{V_1}}
		\left( \prod_{v \in V_{\ge 3}} \prod_{i \in \overline{v}} e_v \varepsilon_i \right) \\
		&\sum_{n \in \Z_{\ge 0}^{V_{\ge 3}}}
		q^{Q( e(n + \delta_{\ge 3}/2 + {}^t\!T \varepsilon/2 - (1, \dots, 1)) )}
		\prod_{v \in V_{\ge 3}} \binom{n_{v} + \deg(v) - 3}{n_{v}}.
	\end{align}
	Since $ W^{-1} \in \Aut(\Z^V) $, every matrix entries of $ S {}^t\!T $ is integers by \cref{prop:S} \cref{item:prop:S1}. 
	Thus, for any $ \varepsilon \in \{ \pm 1 \}^{V_1} $, we have $ {}^t\!T \varepsilon/2 \in (2S)^{-1}(\Z^{V_{\ge 2}}) $. 
	Since 
	\[
	\frac{1}{2} {}^t\!T \varepsilon = \left( \sum_{i \in \overline{v}} \frac{\varepsilon_i}{2w_i} \right)_{v \in V_{\ge 2}}
	\]
	by the definition of $ T $ in \cref{subsec:S}, the map
	\[
	\begin{array}{ccc}
		\{ \pm 1 \}^{V_1} & \longrightarrow & \calS \\
		\varepsilon & \longmapsto & \delta_{\ge 3}/2 + {}^t\!T \varepsilon/2 - (1, \dots, 1) 
	\end{array}
	\]
	is bijective by \cref{lem:bij_calS} \cref{item:lem:bij_calS1}.
	Thus, the claim follows by letting $ \alpha = \delta_{\ge 3}/2 + {}^t\!T \varepsilon/2 - (1, \dots, 1) $.
\end{proof}

% --------------------------------------------------------------------------

\section{An asymtotic formula} \label{sec:asymp}

% --------------------------------------------------------------------------

In this section, we develop an asymptotic formula which we need to calculate radial limits of homological blocks.

% --------------------------------------------------------------------------

\subsection{Asymtotic formulas obtained from Euler--Maclaurin summation formula} \label{subsec:Euler--Maclaurin}

% --------------------------------------------------------------------------

Some useful methods have been developed to calculate radial limits of false theta functions.
The method of Lawrence--Zagier~\cite[p. 98, Proposition]{LZ} is the beginning of these.
Lawrence--Zagier~\cite{LZ} developed it by using $ L $-functions for periodic maps.
Zagier~\cite{Zagier_asymptotic} collected many techniques to calculate asymptotic expansions of infinite series.
In particular, \cite[Equation (44)]{Zagier_asymptotic} is the very powerful method followed by the Euler--Maclaurin summation formula.
In our setting, these methods deal with the case when $ \abs{V_{\ge 2}} = 1 $ (in this case, $ M(\Gamma) $ is a Seifert $ 3 $-manifold).
Bringmann--Kaszian--Milas~\cite[Equation (2.8)]{BKM} and Bringmann--Mahlburg--Milas~\cite[Lemma 2.2]{BMM_high_depth} stated the two-variable case of \cite[Equation (44)]{Zagier_asymptotic}.

In this subsection, we develop a more general formula (\cref{prop:Euler--Maclaurin_poly}) to deal with $ n $-variable case with polynomial weights.

To begin with, we prepare the notation for asymptotic expansion by Poincar\'{e}. 

\begin{dfn}[Poincar\'{e}]
	Let $ L $ be a positive number, $ f \colon \R_{>0} \to \bbC $ be maps, $ t $ be a variable of $ \R_{>0} $, and $ (a_n)_{n \ge -L} $ be a family of complex numbers.
	Then, we write
	\[
	f(t) \sim \sum_{n \ge -L} a_n t^{n} \text{ as } t \to +0
	\]
	if for any positive number $ M $ there exist positive numbers $ K_M $ and $ \varepsilon $ such that
	\[
	\abs{ f(t) - \sum_{-L \le n \le M} a_n t^{n} }
	\le K_M \abs{ t^{M+1}}
	\]
	for any $ 0 < t < \varepsilon $.
	In this case, we call the infinite series $ \sum_{n \ge -L} a_n t^{n} $ as the \textbf{asymptotic expansion} of $ f(t) $ as $ t \to +0 $. 
\end{dfn}

Here we remark that asymptotic expansions are typically divergent series.

We also need the following terminology.

\begin{dfn}
	A function $ f \colon \R \to \bbC $ is called of \textbf{rapid decay} as $ x \to \infty $ if $ x^m f^{(n)} (x) $ is bounded as $ x \to \infty $ for any $ m, n \in \Z_{\ge 0} $.
\end{dfn}

Our starting point is the following lemma followed by the Euler--Maclaurin summation formula.

\begin{lem}[{\cite[Equation (44)]{Zagier_asymptotic}}] \label{lem:Euler--Maclaurin}
	Let $ N $ be a positive number and $ f \colon \R^N \to \bbC $ be a $ C^\infty $ function of rapid decay as $ x_1, \dots, x_N \to \infty $.
	Fix $ \alpha \in \R^N $.
	Then, for a variable $ t \in \R_{>0} $, an asymptotic expansion as $ t \to +0 $
	\[
	\sum_{n \in \Z_{\ge 0}^N} f(t(n+\alpha))
	\sim \sum_{-1 \le n_i, \, 1 \le i \le N} \left( \prod_{1 \le i \le N} \frac{B_{n_i+1}(\alpha_i)}{(n_i+1)!} \right)
	f^{(n)}(0) t^{n_1 + \cdots + n_N}
	\]
	holds.
	Here, $ F(t) \sim G(t) $ means that $ F(t) = G(t) + O(t^R) $ for any positive number $ R $,
	$ B_i(x) $ is the $ i $-th Bernoulli polynomial, and
	\[
	g^{(-1)}(x) = \frac{d^{-1}}{d x^{-1}} g(x) \coloneqq -\int_x^\infty g(x') dx', \quad
	f^{(n)}(x) \coloneqq \frac{\partial^{n_1 + \cdots + n_N} f}{\partial x_1^{n_1} \cdots \partial x_r^{n_N}} (x).
	\]
\end{lem}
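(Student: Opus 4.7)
The plan is to prove the expansion by induction on $N$, using the classical one-dimensional Euler--Maclaurin summation formula as the base case and iterating it over the remaining variables. The crux is that the rapid decay of $f$ (together with all its partial derivatives) kills the boundary contributions at infinity and allows the remainder terms to be controlled uniformly.

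For $N=1$, I would apply the shifted Euler--Maclaurin formula: for a $C^\infty$ function $g \colon \R \to \bbC$ of rapid decay,
\[
\sum_{n=0}^\infty g(n+\alpha) = \int_0^\infty g(x)\,dx - \sum_{k=0}^{K-1} \frac{B_{k+1}(\alpha)}{(k+1)!}\, g^{(k)}(0) + R_K,
\]
where the remainder $R_K$ involves an integral against a periodised Bernoulli polynomial of $g^{(K)}$, which vanishes at infinity and is of size $O(\sup|g^{(K)}|)$. Specialising to $g(x) := f(tx)$ gives $g^{(k)}(0) = t^k f^{(k)}(0)$ for $k \ge 0$, and $\tfrac{1}{t}\int_0^\infty f(y)\,dy$ for the integral contribution. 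Under the paper's convention $f^{(-1)}(0) = -\int_0^\infty f(y)\,dy$ and $B_0(\alpha) \equiv 1$, the integral term is exactly the $n=-1$ summand, and the remaining terms produce the stated asymptotic series for $N=1$.

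For the inductive step, assume the formula for $N-1$ variables. Splitting $x = (x', x_N)$, $t = (t', t_N)$, $\alpha = (\alpha', \alpha_N)$, and similarly $n = (n', n_N)$, I would first apply the $N=1$ case to the inner sum over $n_N$ while keeping $x'$ fixed, treating $x_N \mapsto f(x', x_N)$ as a one-variable function. This yields
\[
\sum_{n_N \ge 0} f\bigl(t'(n'+\alpha'),\, t_N(n_N+\alpha_N)\bigr)
\sim \sum_{n_N \ge -1} \frac{B_{n_N+1}(\alpha_N)}{(n_N+1)!}\, t_N^{n_N}\, \partial_{x_N}^{n_N} f\bigl(t'(n'+\alpha'), 0\bigr).
\]
For each fixed $n_N \ge -1$, the function $x' \mapsto \partial_{x_N}^{n_N} f(x', 0)$ is again $C^\infty$ and rapidly decaying in $\R^{N-1}$, so the induction hypothesis applies and produces the product $\prod_{i<N}\frac{B_{n_i+1}(\alpha_i)}{(n_i+1)!}\, t_i^{n_i}\, f^{(n)}(0)$ summed over $n' \ge -1$. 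Multiplying the two expansions and combining the outer $n_N$-factor yields the full $N$-dimensional formula.

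The main obstacle is justifying the interchange of iterated asymptotic expansions rigorously: the remainder after truncating the $n_N$-expansion is a function of $x'$ that must itself be of rapid decay in order for the induction hypothesis to apply to both the principal terms and to the remainder. This is where one needs that $f$ and \emph{all} of its mixed partial derivatives $\partial^\gamma f$ are rapidly decaying, so that the Bernoulli-polynomial integral remainder in one variable is bounded by a rapidly decaying function of the remaining variables. Quantifying this requires writing Euler--Maclaurin in its explicit remainder form at each stage and tracking that the product of per-variable error bounds yields an $O(t^R)$ total error for any prescribed $R$; all other steps (the sign convention for $f^{(-1)}$, the $B_0 = 1$ normalisation, and the combinatorics of combining product indices) are bookkeeping.
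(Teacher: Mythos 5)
The paper never proves this lemma: it is quoted with a citation to Zagier for $N=1$ and to Bringmann--Kaszian--Milas and Bringmann--Mahlburg--Milas for $N=2$, so there is no in-paper argument to compare against. Your induction on $N$ --- apply the one-variable shifted Euler--Maclaurin formula to the innermost sum and then apply the inductive hypothesis to each coefficient function $x'\mapsto\partial_{x_N}^{n_N}f(x',0)$ (including $n_N=-1$, where this function is $-\int_0^\infty f(x',y)\,dy$, again smooth and rapidly decaying) --- is exactly how the cited two-variable versions are obtained, so the skeleton is the right one and supplies a proof the paper omits.

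Two points need more care. First, the sign bookkeeping does not close as written. With the convention $f^{(-1)}(0)=-\int_0^\infty f(y)\,dy$, the one-variable identity you display reads $\sum_{n\ge0}g(n+\alpha)\sim-\sum_{k\ge-1}\frac{B_{k+1}(\alpha)}{(k+1)!}\,g^{(k)}(0)$: each application of Euler--Maclaurin contributes a global factor $-1$ once the integral term is absorbed into the $k=-1$ summand, and the $N$-fold iteration therefore produces $(-1)^N$ times the right-hand side of the lemma. Your assertion that the $N=1$ terms ``produce the stated asymptotic series'' contradicts your own displayed formula; check the normalisation on $f(x)=e^{-x}$, $\alpha=0$, where $\sum_{n\ge0}e^{-tn}=t^{-1}+\tfrac12+\tfrac{t}{12}+\cdots$. (The overall constant is independent of $f$ and cancels in the paper's applications, but it has to be tracked.) Second, the remainder analysis in the inductive step is the only genuinely nontrivial point and you only gesture at it. After truncating the inner expansion at order $K$ in $t_N$, the remainder is a function of $x'$ which you then sum over $n'\in\Z_{\ge0}^{N-1}$; even with a rapidly decaying uniform bound this sum contributes a factor of order $t_1^{-1}\cdots t_{N-1}^{-1}$, so the total error is $O\bigl(t_N^{K}\,t_1^{-1}\cdots t_{N-1}^{-1}\bigr)$. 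That is $O(t^R)$ for every $R$ only if $K$ may be chosen large relative to $R+N$ \emph{and} the components of $t$ are comparable (for instance all proportional to a single parameter tending to $0$); for a fully independent limit $t\in\R_{>0}^N\to 0$ you must say what $O(t^R)$ means. Writing the one-variable remainder in explicit periodised-Bernoulli form, extracting a rapidly decaying bound in the remaining variables, and carrying out this count is the content still missing from the write-up.
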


Here we remark that Zagier~\cite[Equation (44)]{Zagier_asymptotic} stated for the case $ N=1 $ and Bringmann--Kaszian--Milas~\cite[Equation (2.8)]{BKM} and Bringmann--Mahlburg--Milas~\cite[Lemma 2.2]{BMM_high_depth} stated for the case $ N=2 $.

By \cref{lem:Euler--Maclaurin}, we obtain the following asymptotic expansion formula.

\begin{prop} \label{prop:Euler--Maclaurin_poly}
	Let $ N $ and $ N' $ be non-negative integers, $ f \colon \R^{N+N'} \to \bbC $ be a $ C^\infty $ function of rapid decay as $ x_1, \dots, x_N \to \infty $, and $ P(x) = \sum_{m \in \Z_{\ge 0}^N} p_m x_1^{m_1} \cdots x_N^{m_N} $ be a polynomial.
	Fix $ \alpha, \lambda \in \R^N $ and $ \alpha' \in \R^{N'} $.
	Then, for a variable $ t \in \R_{>0} $, an asymptotic expansion as $ t \to +0 $
	\[
	\sum_{n \in \Z_{\ge 0}^N} P(\lambda + n) f(t(\alpha + \lambda + n), t\alpha')
	\sim \sum_{n \in \Z^N \times \Z_{\ge 0}^{N'}} t^{n_1 + \cdots + n_{N+N'}} f^{(n)}(0) 
	\frac{{\alpha_1'}^{n_{N+1}} \cdots {\alpha_{N'}'}^{n_{N+N'}}}{n_{N+1}! \cdots n_{N+N'}!} 
	\sum_{m \in \Z_{\ge 0}^N} p_m \bbB_{m, n}(\alpha, \lambda)
	\]
	holds.
	Here we define
	\begin{align}
		\bbB_{m, n}(\alpha, \lambda)
		&\coloneqq
		\prod_{1 \le i \le N} \bbB_{m_i, n_i} (\alpha_i, \lambda_i), \\
		\bbB_{m_i, n_i} (\alpha_i, \lambda_i)
		&\coloneqq
		\begin{dcases}
			\sum_{0 \le l \le m_i + n_i + 1} b_{m_i, n_i, l} B_{m_i + n_i + 1 - l}(\lambda_i) \alpha_i^l & \text{ if } n_i \ge 0, \\
			\frac{m_i!}{(m_i + n_i + 1)!} (-\alpha_i)^{m_i + n_i + 1} & \text{ if } -m_i - 1 \le n_i \le -1, \\
			0 & \text{ if } n_i \le -m_i - 2,
		\end{dcases}
		\\
		b_{m_i, n_i, l} 
		&\coloneqq 
		\frac{m_i!}{(m_i + n_i + 1- l)!} \sum_{0 \le k \le l} \pmat{m_i + n_i - k \\ n_i} \frac{(-1)^k}{k! (l-k)!}.
	\end{align}
\end{prop}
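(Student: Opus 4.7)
The plan is to reduce to \cref{lem:Euler--Maclaurin} by absorbing the polynomial weight into the function, handled monomial by monomial. By linearity of both sides of the claim in $P$, it suffices to treat a single monomial $P(x) = \prod_i x_i^{m_i}$ with $m \in \Z_{\ge 0}^N$. Applying the binomial theorem coordinatewise gives
$P(\lambda + n) = \sum_{0 \le k \le m} \binom{m}{k} (-\alpha)^{m-k} (\alpha + \lambda + n)^k$,
where all inequalities, binomials, products, and powers are understood componentwise.

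For each such $k$, set $h_k(y) := \prod_i y_i^{k_i} \cdot f(y)$, which is $C^\infty$ of rapid decay. The key identity
$(\alpha + \lambda + n)^k f(t(\alpha + \lambda + n)) = t^{-k} h_k(t(\alpha + \lambda + n))$
lets us apply \cref{lem:Euler--Maclaurin} to $h_k$ with shift $\alpha + \lambda$; reindexing $n = n' - k$ produces an asymptotic expansion in $t^n$ over $n \in \Z^N$ (with $n_i \ge -1 - k_i$ for each contributing $k$). By the product structure, the coefficient of $t^n f^{(n)}(0)$ factorises over coordinates, so the claim $\bbB_{m,n}(\alpha,\lambda) = \prod_i \bbB_{m_i,n_i}(\alpha_i,\lambda_i)$ reduces to a single-variable identity in each coordinate $i$.

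The one-variable computation hinges on evaluating $h_k^{(n')}(0)$. For $n'_i \ge 0$: the Leibniz rule together with $\partial_{y_i}^j(y_i^{k_i})|_{y_i = 0} = k_i!\,\delta_{j, k_i}$ forces exactly $k_i$ of the $n'_i$ derivatives in direction $i$ to fall on $y_i^{k_i}$, contributing the factor $\frac{n'_i!}{(n'_i - k_i)!}$ (and zero if $n'_i < k_i$) while passing the remaining $n_i = n'_i - k_i$ derivatives to $f$. For $n'_i = -1$, which forces $k_i = -n_i - 1$: the antiderivative in direction $i$ combined with the integration-by-parts identity $\int_0^\infty w^{k} F(w)\,dw = k!\,F^{(-k-1)}(0)$ (up to the sign convention for $g^{(-1)}$) produces a factor $k_i!$ attached to $f^{(n)}(0)$ with $n_i = -k_i - 1$. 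When $n_i \le -m_i - 2$, the range $k_i \in [-n_i - 1, m_i]$ is empty, so there is no contribution, matching $\bbB_{m_i, n_i} = 0$.

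For $-m_i - 1 \le n_i \le -1$, only the single value $k_i = -n_i - 1$ contributes (with $B_0 = 1$ and $(n_i + k_i + 1)! = 1$), directly yielding $\frac{m_i!}{(m_i + n_i + 1)!}(-\alpha_i)^{m_i + n_i + 1}$. For $n_i \ge 0$, the main task is to expand
$B_{n_i + k_i + 1}(\alpha_i + \lambda_i) = \sum_j \binom{n_i + k_i + 1}{j} B_{n_i + k_i + 1 - j}(\lambda_i)\,\alpha_i^j$,
combine with the factor $\binom{m_i}{k_i}(-\alpha_i)^{m_i - k_i} \frac{(n_i + k_i)!}{n_i!(n_i + k_i + 1)!}$ coming from the monomial expansion, the Euler--Maclaurin weight, and the Leibniz factor, and collect by powers of $\alpha_i^l B_{m_i + n_i + 1 - l}(\lambda_i)$; after the substitution $k' = m_i - k_i$, the resulting coefficient becomes
$\frac{m_i!}{(m_i + n_i + 1 - l)!} \sum_{k'=0}^{l} \frac{(-1)^{k'}}{k'!(l-k')!} \binom{m_i + n_i - k'}{n_i} = b_{m_i, n_i, l}$.
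The main obstacle is precisely this combinatorial matching together with careful tracking of the integral/derivative conventions in the antiderivative case: although each manipulation is routine, correctly identifying the alternating inner sum in $b_{m_i, n_i, l}$ requires precise index bookkeeping connecting the binomial expansion of $(-\alpha)^{m-k}$, the Bernoulli shift identity, and the Leibniz evaluation of $h_k^{(n+k)}(0)$.
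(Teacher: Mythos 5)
Your argument is correct and follows essentially the same route as the paper: reduce to one variable, absorb the monomial weight into the function (your $h_k(y)=y^kf(y)$ is precisely the binomial expansion of the paper's $g_{t\alpha}(m,f;x)=(x-t\alpha)^mf(x)$, whose derivatives at $0$ the paper computes in \cref{lem:g_diff} by exactly your Leibniz and integration-by-parts argument), apply \cref{lem:Euler--Maclaurin}, and then rearrange with the addition formula for Bernoulli polynomials to identify $b_{m_i,n_i,l}$. The only real difference is organisational --- you expand binomially before invoking \cref{lem:Euler--Maclaurin}, which has the small advantage of applying that lemma only to $t$-independent functions, whereas the paper applies it to $g_{t\alpha}$ --- and the sign ambiguity you flag in the antiderivative case ($\int_0^\infty w^kF(w)\,dw$ versus $k!\,F^{(-k-1)}(0)$) is handled with the same looseness in the paper's own proof of \cref{lem:g_diff}.
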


\begin{proof}
	It suffices to show the case when $ (N, N') = (1, 0) $ or $ (0,1) $. 
	In the last case, the claim follows from the Taylor's formula.
	In the following, we assume $ (N, N') = (1, 0) $.
	Let $ g_\alpha(m, f; x) \coloneqq (x-\alpha)^m f(x) $.
	By \cref{lem:Euler--Maclaurin}, we have
	\begin{align}
		\sum_{n \ge 0} P(\lambda + n) f(t(\alpha + \lambda + n))
		&= \sum_{m \ge 0} p_m t^{-m} \sum_{n \ge 0} \left( t(\alpha + \lambda + n) - t \alpha \right)^m f(t(\alpha + \lambda + n)) \\
		&= \sum_{m \ge 0} p_m t^{-m} \sum_{n \ge 0} g_{t \alpha}(m, f; t(\alpha + \lambda + n)) \\
		&\sim \sum_{m \ge 0} p_m t^{-m} \sum_{n \ge -1} \frac{B_{n+1}(\alpha + \lambda)}{(n+1)!} g_{t \alpha}^{(n)} (m, f; 0) t^n \\
		&= \sum_{n > - \infty} t^n \sum_{m \ge 0} p_m \frac{B_{m+n+1}(\alpha + \lambda)}{(m+n+1)!} g_{t \alpha}^{(m+n)} (m, f; 0).
	\end{align}
	We need the following lemma.
	
	\begin{lem} \label{lem:g_diff}
		For any $ n \ge -1 $, it holds that
		\begin{align}
			g_{\alpha}^{(n)}(m, f; 0)
			&=
			\sum_{0 \le l \le m} \pmat{m \\ l} \pmat{n \\ l} l! (-\alpha)^{m - l} f^{(n-l)} (0) \\
			&=
			m! \sum_{0 \le l \le m} \pmat{n \\ l} \frac{(-\alpha)^{m-l}}{(m-l)!} f^{(n-l)} (0).
		\end{align}
	\end{lem}
	
	\begin{proof}
		We recall that $ \binom{n}{l} = 1 $ for $ n=-1 $ by our definition of binomial coefficients in \cref{eq:binomial}.
		If $ n \ge 0 $, then by the Leibnitz rule, we have
		\[
		g_{\alpha}^{(n)} (m, f; 0)
		=
		\sum_{0 \le l \le n} \pmat{n \\ l} \restrict{\frac{d^l}{dx^l} (x - \alpha)^m}{x=0} f^{(n-l)}(0).
		\]
		Thus, we obtain the claim.
		If $ n = -1 $, then the claim follows by induction and
		\[
		g_{\alpha}^{(-1)} (m, f; 0)
		= (-\alpha)^m f^{(-1)} (0) + m g_{\alpha}^{(-1)} \left(m-1, f^{(-1)}; 0 \right)
		\]
		which follows from an integration by parts.
	\end{proof}
	
	We turn back to the proof of \cref{prop:Euler--Maclaurin_poly}. 
	By \cref{lem:g_diff}, we have
	\begin{align}
		&\sum_{n \ge 0} P(\lambda + n) f(t(\alpha + \lambda + n)) \\
		\sim 
		&\sum_{n > - \infty} t^n \sum_{m \ge 0} p_m \frac{B_{m+n+1}(\alpha + \lambda)}{(m+n+1)!}
		m! \sum_{0 \le l \le m} \pmat{m+n \\ l} \frac{(-t\alpha)^{m-l}}{(m-l)!} f^{(m+n-l)} (0).
	\end{align}
	By replacing $ m+n-l $ by $ n $, this can be written as
	\[
	\sum_{n > - \infty} t^n f^{(n)} (0) \sum_{m \ge 0} p_m 
	m! \sum_{0 \le l \le m} \frac{B_{n+l+1}(\alpha + \lambda)}{(n+l+1)!} \pmat{n+l \\ l} \frac{(-\alpha)^{m-l}}{(m-l)!}.
	\]
	Thus, it suffices to show that
	\begin{equation} \label{eq:bbB}
		\bbB_{m, n}(\alpha, \lambda)
		= 
		m! \sum_{0 \le l \le m} \frac{B_{n+l+1}(\alpha + \lambda)}{(n+l+1)!} \pmat{n+l \\ l} \frac{(-\alpha)^{m-l}}{(m-l)!}.
	\end{equation}
	By the addition formula of Bernoulli polynomials (for example, see \cite[Equation 23.1.7]{Handbook_formulas})
	\[
	B_i(\alpha + \lambda) = \sum_{0 \le j \le i} \pmat{i \\ j} B_j(\lambda) \alpha^j,
	\]
	the right hand side of \cref{eq:bbB} is equal to
	\[
	m! \sum_{0 \le l \le m} \pmat{n+l \\ l} \frac{(-\alpha)^{m-l}}{(m-l)!}
	\sum_{0 \le k \le n+l+1} \frac{B_{n+l+1-k}(\lambda)}{(n+l+1-k)!} \frac{\alpha^k}{k!}.
	\]
	By letting $ i \coloneqq m+k-l, j \coloneqq m-l $, this is written as
	\[
	\sum_{0 \le i \le m+n+1} B_{m+n+1-i}(\lambda) \alpha^i
	\frac{m!}{(m+n+1-i)!} \sum_{0 \le j \le i} \pmat{m+n-j \\ n} \frac{(-1)^{j}}{j! (i-j)!}.
	\]
	Thus, we obtain \cref{eq:bbB} for the case when $ n \ge 0 $. 
	Assume $ n \le -1 $. 
	Then, the right hand side of \cref{eq:bbB} is $ 0 $ unless $ m+n-j = -1 $ for some $ 0 \le j \le i \le m+n+1 $. 
	It occurs only for the case when $ i = j = m+n+1 $. 
	Thus, if $ m+n+1 < 0 $ then the right hand side of \cref{eq:bbB} is $ 0 $ and \cref{eq:bbB} holds. 
	If $ m+n+1 \ge 0 $, the right hand side of \cref{eq:bbB} is written as
	\[
	\frac{m!}{(m+n+1)!} (-\alpha)^{m+n+1}
	\]
	and thus, \cref{eq:bbB} also holds in this case. 
\end{proof}

\begin{rem}
	If $ m_i = 0 $, then since $ b_{0, n_i, l} = 1/l! (n_i + 1 - l)! $ we have
	\[
	\bbB_{0, n_i} (\alpha_i, \lambda_i) = B_{n_i + 1} (\lambda_i + \alpha_i)
	\]
	by the addition formula of Bernoulli polynomials (for example, see \cite[Equation 23.1.7]{Handbook_formulas}).
	Thus, in this case, \cref{prop:Euler--Maclaurin_poly} coincides with \cite[Equation (44)]{Zagier_asymptotic}, \cite[Equation (2.8)]{BKM}, and \cite[Lemma 2.2]{BMM_high_depth}.
\end{rem}

% --------------------------------------------------------------------------

\subsection{Asymtotic expansion of infinite series with weighted Gauss sums} \label{subsec:asymp_Gauss}

% --------------------------------------------------------------------------

In this subsection, we study a family of infinite series $ F(f; t) $ with weighted Gauss sums, which involves both WRT invariants and homological blocks.
We start with the following asymptotic formula following from \cref{prop:Euler--Maclaurin_poly}.
In the following statement, we use notations $ \varepsilon, \calS, Q $ and so on, which we prepared in \cref{sec:fund_data,sec:HB_false}.

\begin{cor} \label{cor:asymp_F(f; t)}
	For $ e \in \{ \pm 1 \}^{V_{\ge 3}} $ and a $ C^\infty $ function $ f \colon \R^{V_{\ge 2}} \to \bbC $ of rapid decay as $ x_v \to \infty $ for each $ v \in V_{\ge 2} $, let
	\[
	F_e(f; t) \coloneqq 
	\sum_{\alpha \in \calS} \veps(\alpha)
	\sum_{n \in \Z_{\ge 0}^{V_{\ge 3}}}
	\bm{e}\left( \frac{1}{k} Q(e(n + \alpha)) \right) P(n) f(t(n + \alpha))
	\]
	be a series for a variable $ t \in \R_{>0} $.
	Then, it holds that
	\begin{align} \label{eq:asymp_F(f; t)}
		F_e(f; t) 
		\sim
		&\sum_{n \in \Z^{V_{\ge 2}}} c_{e, n} f^{(n)}(0) t^{\sum_{v \in V_{\ge 2}} n_v}
	\end{align}
	as $ t \to +0 $, where we define
	\begin{align}
		c_{e, n} = \,
		&\sum_{\alpha \in \calS} \veps(\alpha)
		\left( \prod_{v \in V_2} \frac{\alpha_v^{n_v}}{n_v!} \right)
		\sum_{m \in \Z_{\ge 0}^{V_{\ge 3}}} p_m \left( \prod_{v \in V_{\ge 3}} k^{m_v + n_v} \right)
		\sum_{\lambda \in \{ 0, \dots, k-1 \}^{V_{\ge 3}}}
		\bm{e}\left( \frac{1}{k} Q(e(\lambda + \alpha)) \right) \bbB_{m, n}(\alpha, \lambda)
	\end{align}
	for $ n \in \Z^{V_{\ge 2}} $.
\end{cor}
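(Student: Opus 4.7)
The plan is to reduce $F(f;t)$ to a finite superposition of sums to which \cref{prop:Euler--Maclaurin_poly} applies directly, by splitting the summation index $n \in \Z_{\ge 0}^{V_{\ge 3}}$ into residue classes modulo $k$. The key preliminary observation is that, although the weight $\bm{e}(Q(n+\alpha)/k)$ in the definition of $F(f;t)$ has denominator $k$, it is in fact periodic in $n$ with period $k$ in each coordinate.

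First I would verify this periodicity. Expanding the quadratic form,
\[
Q(n + ke_v + \alpha) - Q(n+\alpha) = 2k \sprod{e_v, S(n+\alpha)} + k^2 S_{vv},
\]
so after division by $k$ one must check that $2\sprod{e_v, S(n+\alpha)} + k S_{vv} \in \Z$. Since $S \in \End(\Z^{V_{\ge 2}})$, both $kS_{vv}$ and $2\sprod{e_v, Sn}$ lie in $\Z$. The remaining term $2\sprod{e_v, S\alpha\rangle$ is an integer because $\alpha \in \calS \subset (2S)^{-1}(\Z^{V_{\ge 2}})$ by \cref{lem:bij_calS} together with \cref{prop:S} \cref{item:prop:S5}. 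This integrality is the essential input from the arithmetic structure of $\Gamma$.

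Next, writing each $n \in \Z_{\ge 0}^{V_{\ge 3}}$ uniquely as $n = km + \lambda$ with $m \in \Z_{\ge 0}^{V_{\ge 3}}$ and $\lambda \in \{0,\dots,k-1\}^{V_{\ge 3}}$, the periodicity just established lets me factor the weight out of the inner sum:
\[
F(f;t) = \sum_{\alpha \in \calS} \veps(\alpha) \sum_{\lambda} \bm{e}\!\left( \tfrac{1}{k} Q(\lambda+\alpha) \right) \sum_{m \in \Z_{\ge 0}^{V_{\ge 3}}} P(km+\lambda)\, f(t(km+\lambda+\alpha)).
\]
I would then apply \cref{prop:Euler--Maclaurin_poly} to each innermost sum with the substitutions $N = \abs{V_{\ge 3}}$, polynomial $\tilde{P}(x) := P(kx)$ (whose coefficient of $x^m$ is $k^{\sum_v m_v}\, p_m$), shifts $\tilde{\lambda} := k^{-1}\lambda$ and $\tilde{\alpha} := k^{-1}\alpha$, and rescaled time variable $\tilde{t} := kt$. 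This produces an asymptotic expansion of each inner sum as $\tilde t \to +0$ (equivalently $t \to +0$), and absorbing the factors of $k$ that arise from rescaling gives the claimed form of $c_n$ after summing over $\alpha$ and $\lambda$.

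The main obstacle is precisely the first step: without the arithmetic input $2S\calS \subset \Z^{V_{\ge 2}}$, the factor $\bm{e}(Q(n+\alpha)/k)$ would not be $k$-periodic and the whole decomposition would collapse. Everything after that is bookkeeping of the $k$-rescaling when invoking \cref{prop:Euler--Maclaurin_poly}; in particular, no genuinely new analytic estimate is required beyond what is already packaged in that proposition.
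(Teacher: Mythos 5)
Your proposal is correct and is exactly the argument the paper intends: the paper gives no separate proof of this corollary, but the form of $c_n$ (the factors $k^{m_v+n_v}$ and the sum over $\lambda\in\{0,\dots,k-1\}^{V_{\ge 3}}$) shows it is obtained by the same mod-$k$ splitting of the index combined with \cref{prop:Euler--Maclaurin_poly}, precisely as in the paper's proof of \cref{prop:asymp_lim}, and your periodicity check via $2S\calS\subset\Z^{V_{\ge 2}}$ is the right arithmetic input. Note only that your bookkeeping naturally yields $\bbB_{m,n}(k^{-1}\alpha,k^{-1}\lambda)$ in the coefficient, consistent with \cref{prop:asymp_lim}, whereas the corollary's displayed formula writes $\bbB_{m,n}(\alpha,\lambda)$; this appears to be a notational slip in the statement rather than a defect of your argument.
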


We define two $ C^\infty $ function of rapid decay $ f_1, f_2 \colon \R^{V_{\ge 2}} \to \bbC $ 
as $ f_1(x) \coloneqq \exp(-\sum_{v \in V_{\ge 2}} x_v) $ and $ f_2 \coloneqq \exp(-Q(x)) $ respectively.
Then, WRT invariants and limit values of homological blocks can be written as limit values of $ F(f_1; t) $ and $ F(f_2; t) $ as $ t \to 0 $ shown later in \cref{prop:WRT_F(f_1;t),prop:HB_lim}, respectively.

Here, we need to prove that $ F(f; t) $ has a limit as $ t \to 0 $, that is, $ c_n $ vanishes for $ n \in \Z^{V_{\ge 3}} $ with non-positive components.
Such a result is called ``a vanishing result of weighted Gauss sums'' in \cite{MM}, and it is the most important part in a proof of the conjecture of Gukov--Pei--Putrov--Vafa~\cite[Conjecture 2.1, Equation (A.28)]{GPPV}.
In the previous works~\cite{BMM_high_depth,MM}, it is proved by direct calculations (\cite[Theorem 4.1]{BMM_high_depth},\cite[Proposition 4.2]{MM}).
On the other hand, our proof is based on the above asymptotic expansion in \cref{cor:asymp_F(f; t)}.
Since $ c_n $ is independent of $ f $ in this asymptotic expansion, it suffices to consider not for any $ f $, but for $ f_1 $.
Since $ F(f_1; t) $ is a rational function shown later in \cref{lem:F(f_1;t)}, vanishings results of $ c_n $ is equivalent to holomorphy of $ F(f_1; t) $, which is proved in \cref{sec:proof_main}.

The homological block $ \widehat{Z}_{\Gamma} (\zeta_k e^{-t^2}) $ can be written in term of $ F(f_2; t) $ as follows.

\begin{prop} \label{prop:HB_lim}
	It holds that
	\[
	\restrict{\left(
		(-1)^{\abs{V_1}} q^{-(\sum_{v \in V} (w_{v} + 3) + \sum_{i \in V_1} 1/w_i)/4} 
		\widehat{Z}_{\Gamma} (q) 
		\right)}{q =\zeta_k e^{-t^2}}
	= 
	\sum_{e \in \{ \pm 1 \}^{V_{\ge 3}}} \left( \prod_{v \in V_{\ge 3}} e_v^{\deg (v) + \abs{\overline{v}}} \right)
	F_e(f_2; t).
	\]
\end{prop}

This proposition follows from \cref{prop:HB_false} for $ C(n) = \bm{e}\left( Q(e(n + \alpha))/k \right) $.

In the last of this subsection, we prove that $ F(f_1; t) $ is a rational function related to WRT invariants.
To begin with, we replace quadratic forms $ Q(n + \alpha) $ as linear forms in the indices of $ \zeta_k $ in the definition of $ F(f; t) $.

\begin{lem} \label{lem:F(f;t)}
	For $ e \in \{ \pm 1 \}^{V_{\ge 3}} $ and a $ C^\infty $ function $ f \colon \R^{V_{\ge 2}} \to \bbC $ of rapid decay as $ x_v \to \infty $ for each $ v \in V_{\ge 2} $, it holds that
	\begin{align}
		F_e(f; t) = 
		&\frac{\bm{e} \left( \abs{V_{\ge 2}}/8 \right)}{\sqrt{2k}^{\abs{V_{\ge 2}}} \prod_{i \in V_1} \sqrt{\abs{w_i}}}
		\sum_{ \mu \in \Z^{V_{\ge 2}} / 2kS \left(\Z^{V_{\ge 2}} \right) } 
		\bm{e} \left( -\frac{1}{4k} {}^t\!\mu S^{-1} \mu \right) \\
		&\sum_{\alpha \in \calS} \veps(\alpha)
		\sum_{n \in \Z_{\ge 0}^{V_{\ge 3}}}
		\bm{e}\left( \frac{1}{k} {}^t\!\mu e(n + \alpha) \right) P(n) f(t(n + \alpha)).
	\end{align}
\end{lem}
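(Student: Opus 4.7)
The plan is to reduce the lemma to a pointwise identity that, for each fixed $\alpha\in\calS$ and $n\in\Z_{\ge 0}^{V_{\ge 3}}$, replaces the factor $\bm{e}(\frac{1}{k}Q(n+\alpha))$ in the definition of $F(f;t)$ by the Gauss sum on the right hand side of the lemma. Because the remaining weight $\veps(\alpha)P(n)f(t(n+\alpha))$ does not involve $\mu$, once the pointwise identity
\[
\bm{e}\!\left(\tfrac{1}{k}Q(n+\alpha)\right)
= \frac{\bm{e}(\abs{V_{\ge 2}}/8)}{\sqrt{2k}^{\abs{V_{\ge 2}}}\prod_{i\in V_1}\sqrt{\abs{w_i}}}
\sum_{\mu\in\Z^{V_{\ge 2}}/2kS(\Z^{V_{\ge 2}})} \bm{e}\!\left(-\tfrac{1}{4k}\,{}^t\!\mu S^{-1}\mu + \tfrac{1}{k}\,{}^t\!(n+\alpha)\mu\right)
\]
is established, substituting it into $F(f;t)$ and pulling the $\mu$-sum to the outside of the $\alpha$- and $n$-sums recovers the stated form.

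To prove this pointwise identity I first complete the square. Setting $x := n+\alpha$, \cref{prop:S}\cref{item:prop:S5} gives $2S\alpha\in\Z^{V_{\ge 2}}$, and $2Sn\in\Z^{V_{\ge 2}}$ trivially since $S\in\End(\Z^{V_{\ge 2}})$; hence $2Sx\in\Z^{V_{\ge 2}}$ and the substitution $\mu\mapsto\mu+2Sx$ is a bijection of $\Z^{V_{\ge 2}}/2kS(\Z^{V_{\ge 2}})$. A direct expansion yields
\[
-\tfrac{1}{4k}\,{}^t\!(\mu+2Sx) S^{-1}(\mu+2Sx) + \tfrac{1}{k}\,{}^t\!x(\mu+2Sx)
= -\tfrac{1}{4k}\,{}^t\!\mu S^{-1}\mu + \tfrac{1}{k}Q(x),
\]
so after factoring out $\bm{e}(Q(x)/k)$ the identity reduces to the pure Gauss sum evaluation
\[
G \;:=\; \sum_{\mu\in\Z^{V_{\ge 2}}/2kS(\Z^{V_{\ge 2}})}\bm{e}\!\left(-\tfrac{1}{4k}\,{}^t\!\mu S^{-1}\mu\right)
= \bm{e}(-\abs{V_{\ge 2}}/8)\,(2k)^{\abs{V_{\ge 2}}/2}\prod_{i\in V_1}\sqrt{\abs{w_i}},
\]
where I use $\det S = \prod_{i\in V_1}\abs{w_i}$ from \cref{prop:S}\cref{item:prop:S2}.

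The Gauss sum $G$ will be evaluated via the reciprocity formula (\cref{prop:reciprocity}) applied to the unimodular lattice $L = \Z^{V_{\ge 2}}$ with the standard inner product (so $L'=L$, $\abs{L'/L}=1$), with reciprocity parameter taken to be $1$, with $u=0$, and with the self-adjoint automorphism $h := 2kS$. Integrality and positive-definiteness of $S$ yield $h(L')\subset L'$ and $\tfrac{1}{2}\sprod{y,h(y)} = k\,{}^t\!ySy\in\Z$ for $y\in\Z^{V_{\ge 2}}$, and the signature of $\sprod{\cdot,h(\cdot)}$ equals $\abs{V_{\ge 2}}$. The left hand side of reciprocity collapses to the single term $x=0$ of value $1$, while the right hand side equals $\bm{e}(\abs{V_{\ge 2}}/8)\big/\big((2k)^{\abs{V_{\ge 2}}/2}\sqrt{\det S}\big)\cdot G$; solving for $G$ produces the displayed value.

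The main subtlety is precisely this choice of lattice setup, arranged so that the Gaussian phase $\bm{e}(\abs{V_{\ge 2}}/8)$ and the normalisation $\sqrt{2k}^{\abs{V_{\ge 2}}}\prod_{i\in V_1}\sqrt{\abs{w_i}}$ emerge exactly as in the statement; the remaining substitutions and rearrangements are purely routine bookkeeping.
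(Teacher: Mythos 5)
Your proof is correct and takes essentially the same route as the paper's: both evaluate the pure Gauss sum $\sum_{\mu}\bm{e}(-{}^t\!\mu S^{-1}\mu/4k)$ via the reciprocity formula (getting $\bm{e}(-\abs{V_{\ge 2}}/8)(2k)^{\abs{V_{\ge 2}}/2}\sqrt{\det S}$ with $\det S=\prod_{i\in V_1}\abs{w_i}$) and then complete the square by the shift $\mu\mapsto\mu\pm 2S(n+\alpha)$, justified by $2S(\calS)\subset\Z^{V_{\ge 2}}$. The only difference is presentational: you isolate a pointwise identity for each $(n,\alpha)$ and substitute, while the paper multiplies $F(f;t)$ by the Gauss sum and shifts inside the triple sum.
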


\begin{proof}
	Let
	\[
	G(2kS) \coloneqq
	\sum_{ \mu \in \Z^{V_{\ge 2}} / 2kS \left(\Z^{V_{\ge 2}} \right) } 
	\bm{e} \left( -\frac{1}{4k} {}^t\!\mu S^{-1} \mu \right).
	\]
	Then, we have $ G(2kS) = \left( \sqrt{2k} \bm{e} \left( -1/8 \right) \right)^{\abs{V_{\ge 2}}} \sqrt{\det S} $ by \cref{prop:reciprocity}. 
	Here we remark $ \det S = \prod_{i \in V_1} \abs{w_i} $ by \cref{prop:S} \cref{item:prop:S2}.
	Then, we can write
	\begin{align}
		&G(2kS) F_e(f; t)  \\
		= \,
		&\sum_{ \mu \in \Z^{V_{\ge 2}} / 2kS \left(\Z^{V_{\ge 2}} \right) } 
		\sum_{\alpha \in \calS} \veps(\alpha)
		\sum_{n \in \Z_{\ge 0}^{V_{\ge 3}}}
		\bm{e}\left( \frac{1}{k} \left( Q(e(n + \alpha)) - \frac{1}{4} {}^t\!\mu S^{-1} \mu \right) \right) P(n) f(t(n + \alpha)).
	\end{align}
	By replacing $ \mu $ by $ \mu - 2Se(n + \alpha) $, we obtain the claim.
\end{proof}

By this lemma, we obtain the following representation.

\begin{lem} \label{lem:F(f_1;t)}
	For $ e \in \{ \pm 1 \}^{V_{\ge 3}} $,
	\begin{align}
		F_e(f_1; t) = 
		e_v^{\deg (v) + \abs{\overline{v}}}
		\frac{\bm{e} \left( \abs{V_{\ge 2}}/8 \right)}{(2k)^{\abs{V_{\ge 2}}} \prod_{i \in V_1} \sqrt{\abs{w_i}}}
		\sum_{ \mu \in \Z^{V_{\ge 2}} / 2kS \left(\Z^{V_{\ge 2}} \right) } 
		\bm{e} \left( -\frac{1}{4k} {}^t\!\mu S^{-1} \mu \right) 
		\prod_{v \in V_{\ge 2}} G_v \left( \zeta_{2kM_v}^{\mu_v} e^{-e_v t/2M_v} \right).
	\end{align}
	In particular, $ F_e(f_1; t) $ is a meromorphic function.
\end{lem}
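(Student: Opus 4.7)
The plan is to apply \cref{lem:F(f;t)} with $f = f_1$ and then identify the resulting inner double sum via \cref{lem:expansion} as a product of rational functions $G_v$. Since these two preceding results already package the bulk of the computation, the proof reduces essentially to a matching exercise.

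Substituting $f_1(x) = \exp(-\sum_{v \in V_{\ge 2}} x_v)$ into \cref{lem:F(f;t)}, the factor $f_1(t(n+\alpha))$ becomes $\exp(-{}^t\!(n+\alpha)t)$ (using that $n_v = 0$ and $t_v = 0$ for $v \in V_2$, so only $v \in V_{\ge 3}$ contribute nontrivially). The inner double sum
\[
\sum_{\alpha \in \calS} \veps(\alpha) \sum_{n \in \Z_{\ge 0}^{V_{\ge 3}}} \bm{e}\Bigl( \frac{1}{k} {}^t\!(n+\alpha) \mu \Bigr) P(n) \exp(-{}^t\!(n+\alpha)t)
\]
is then, term by term, the right-hand side of \cref{lem:expansion}, which equals $\prod_{v \in V_{\ge 2}} G_v(\zeta_{2kM_v}^{\mu_v} e^{-t_v/2M_v})$. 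Substituting this identification into the formula of \cref{lem:F(f;t)} yields the claimed closed form immediately.

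For the final assertion, I would observe that the outer sum over $\mu$ is finite, since $\Z^{V_{\ge 2}}/2kS(\Z^{V_{\ge 2}})$ is finite (as $S$ is invertible), and each $G_v(q)$ is by definition a rational function of $q$. Consequently $F(f_1;t)$ is a finite $\bbC$-linear combination of products of rational functions in $(e^{-t_v/2M_v})_{v \in V_{\ge 2}}$, hence itself a rational function in these variables.

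The main obstacle, such as it is, is purely bookkeeping: one must verify that the Laurent expansion of $G_v$ provided by \cref{lem:expansion_v} is valid at $q = \zeta_{2kM_v}^{\mu_v} e^{-t_v/2M_v}$ (which holds since $|q|^{\sgn M_v} = e^{-t_v/2|M_v|} < 1$ for $t_v > 0$), and that the convention on binomial coefficients in \cref{eq:binomial} correctly collapses the expansion to a single $n_v = 0$ contribution for the degree-two vertices, where $t_v = 0$. Once these conventions are aligned, \cref{lem:F(f;t)} and \cref{lem:expansion} combine directly to give the result, and no deeper argument is required.
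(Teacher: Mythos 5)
Your proof is correct and follows essentially the same route as the paper: specialise \cref{lem:F(f;t)} to $f=f_1$ and identify the inner double sum via \cref{lem:expansion}; your added remarks on the validity of the Laurent expansion and the finiteness of $\Z^{V_{\ge 2}}/2kS(\Z^{V_{\ge 2}})$ are just careful bookkeeping. Note only that this derivation yields the prefactor $\sqrt{2k}^{\abs{V_{\ge 2}}}$ inherited from \cref{lem:F(f;t)} (consistent with \cref{prop:Gauss_vanish}), so the $(2k)^{\abs{V_{\ge 2}}}$ in the displayed statement of \cref{lem:F(f_1;t)} appears to be a typo rather than a defect of your argument.
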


\begin{proof}
	By \cref{lem:F(f;t)}, we have
	\begin{align}
		G(2kS) F_e(f_1; t)
		= \,
		&\sum_{ \mu \in \Z^{V_{\ge 2}} / 2kS \left(\Z^{V_{\ge 2}} \right) } 
		\bm{e} \left( -\frac{1}{4k} {}^t\!\mu S^{-1} \mu \right) \\
		&\sum_{\alpha \in \calS} \veps(\alpha)
		\sum_{n \in \Z_{\ge 0}^{V_{\ge 3}}}
		\bm{e}\left( \frac{1}{k} {}^t\!\mu e(n + \alpha) \right) P(n) \exp \left( - \sum_{ v \in V_{\ge 3} } t_v (n_v + \alpha_{v}) \right).
	\end{align}
	By applying \cref{lem:expansion} for this, we have
	\[
	G(2kS) F_e(f_1; t)
	= 
	\sum_{ \mu \in \Z^{V_{\ge 2}} / 2kS \left(\Z^{V_{\ge 2}} \right) } 
	\bm{e} \left( -\frac{1}{4k} {}^t\!\mu S^{-1} \mu \right)
	\prod_{v \in V_{\ge 2}} G_v \left( \zeta_{2kM_v}^{e_v \mu_v} e^{-t/2M_v} \right).
	\]
	By \cref{rem:G_v_symmetry}, we obtain the claim. 
\end{proof}

% --------------------------------------------------------------------------

\section{A proof of the main theorem} \label{sec:proof_main}

% --------------------------------------------------------------------------

Let 
\[
F(\bm{t}) \coloneqq
\sum_{ \mu \in \Z^{V_{\ge 2}} / 2kS \left(\Z^{V_{\ge 2}} \right) } 
\bm{e} \left( -\frac{1}{4k} {}^t\!\mu S^{-1} \mu \right) 
\prod_{v \in V_{\ge 2}} G_v \left( \zeta_{2kM_v}^{\mu_v} e^{-t_v/2M_v} \right).
\]
In this section, we prove holomorphy of this meromorphic function at $ \bm{t} = 0 $ (that is, vanishings of $ c_n $) and our main result.

% --------------------------------------------------------------------------

\subsection{The main result} \label{subsec:proof_main}

% --------------------------------------------------------------------------

In this subsection, we state our main result without a proof.

\begin{prop} \label{prop:Gauss_vanish}
	\begin{enumerate}
		\item \label{item:prop:Gauss_vanish1}
		For each vertex $ v \in V_{\ge 3} $, the order of $ F(\bm{t}) $ at $ t_v=0 $ satisfies
		\[
		\ord_{t_v = 0} F(\bm{t})
		\ge
		\min \{ 0, \abs{\overline{v}} + 2 - \deg(v) \}.
		\]
		
		\item \label{item:prop:Gauss_vanish2}
		If $ \abs{\overline{v}} + 2 - \deg(v) > 0 $ for each vertex $ v \in V_{\ge 3} $, then the meromorphic function $ F(f_1; t) $ is holomorphic at $ t=0 $ and it holds
		\begin{equation} \label{eq:F(f_1;0)}
			F(0) 
			= 
			\frac{\bm{e} \left( \abs{V_{\ge 2}}/8 \right)}{\sqrt{2k}^{\abs{V_{\ge 2}}} \prod_{i \in V_1} \sqrt{\abs{w_i}}}
			\sum_{ \mu \in (\Z \smallsetminus k\Z)^{V_{\ge 2}} / 2kS \left(\Z^{V_{\ge 2}} \right) } 
			\bm{e} \left( -\frac{1}{4k} {}^t\!\mu S^{-1} \mu \right) 
			\prod_{v \in V_{\ge 2}} G_v \left( \zeta_{2kM_v}^{\mu_v} \right).
		\end{equation}
	\end{enumerate}
\end{prop}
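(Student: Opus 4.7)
The plan is to start from the rational-function expression for $F(f_1; t)$ given in \cref{lem:F(f_1;t)}, which writes $F(f_1; t)$ as a finite sum over $\mu \in \Z^{V_{\ge 2}}/2kS(\Z^{V_{\ge 2}})$ of products $\prod_{v' \in V_{\ge 2}} G_{v'}(\zeta_{2kM_{v'}}^{\mu_{v'}} e^{-t_{v'}/2M_{v'}})$. For \cref{item:prop:Gauss_vanish1}, fix $v \in V_{\ge 3}$: in each summand only the factor $G_v(\zeta_{2kM_v}^{\mu_v} e^{-t_v/2M_v})$ depends on $t_v$, so the order at $t_v = 0$ of an individual term equals the order of that factor at $t_v = 0$. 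If $\mu_v \notin k\Z$, then $\zeta_{2kM_v}^{\mu_v}$ is not a $2M_v$-th root of unity, so the potentially singular factor $(q^{M_v}-q^{-M_v})^{\deg(v)-2}$ of $G_v$ stays finite there and the order is $\ge 0$. If $\mu_v = k\nu_v \in k\Z$, a local expansion of each factor of $G_v$ around $\omega = \zeta_{2M_v}^{\nu_v}$, generalising \cref{lem:G_v_at_1}, gives
\[
\ord_{q=\zeta_{2M_v}^{\nu_v}} G_v(q) = (2 - \deg(v)) + \#\{i \in \overline{v} : w_i \mid \nu_v\},
\]
which can be as small as $2 - \deg(v)$, strictly below $\abs{\overline{v}} + 2 - \deg(v)$ whenever $\abs{\overline{v}} > 0$.

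The main obstacle is therefore to exhibit cancellation of these low-order contributions after summation over $\mu$. I would fix $\mu_{v'}$ for $v' \ne v$, group the terms with $\mu_v \in k\Z$, and combine the explicit leading Laurent coefficients of $G_v$ at $q = \zeta_{2M_v}^{\nu_v}$ -- whose structure uses the pairwise coprimality of $\{w_i\}_{i \in \overline{v}}$ from \cref{lem:coprime} -- with the Gauss phase $\bm{e}(-\frac{1}{4k}{}^t\!\mu S^{-1}\mu)$, which is quadratic plus linear in $\mu_v$. The resulting inner sum over $\mu_v \in k\Z$ must annihilate every coefficient of $t_v^{j}$ with $j < \min\{0, \abs{\overline{v}} + 2 - \deg(v)\}$, in the spirit of the Lawrence--Zagier $L$-function argument noted in \cref{sec:intro}, with the oscillatory Gauss phase playing the role of the periodic character.

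For \cref{item:prop:Gauss_vanish2}, the hypothesis and \cref{item:prop:Gauss_vanish1} jointly yield $\ord_{t_v = 0} F(f_1; t) \ge 0$ for every $v \in V_{\ge 3}$; for $v \in V_2$ the factor $G_v$ is a Laurent polynomial, so no pole arises. Hence $F(f_1; t)$ is holomorphic at $t = 0$, and the first equality $F(f_1; 0) = c_0$ is immediate from \cref{cor:asymp_F(f; t)} as the constant term of the asymptotic expansion, using $f_1(0) = 1$. For the final equality in \eqref{eq:F(f_1;0)}, I would take $t \to 0$ in \cref{lem:F(f_1;t)} and split the $\mu$-sum according to the subset $\{v \in V_{\ge 2} : \mu_v \in k\Z\}$: the portion indexed by $\mu$ with $\mu_v \notin k\Z$ for all $v \in V_{\ge 2}$ is precisely the restricted sum appearing in \eqref{eq:F(f_1;0)} up to the normalisation change from $(2k)^{\abs{V_{\ge 2}}}$ to $\sqrt{2k}^{\abs{V_{\ge 2}}}$. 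To account for this discrepancy and the remaining contributions, I would apply the Gauss-sum reciprocity \cref{prop:reciprocity} coordinate-wise to the inner sum over each $\mu_v \in k\Z$, mirroring the transformations carried out in the proof of \cref{prop:WRT_expression}; each such application produces a factor $\sqrt{2k}$, yielding the overall $\sqrt{2k}^{\abs{V_{\ge 2}}}$ in the denominator of \eqref{eq:F(f_1;0)}, while the cancellations from \cref{item:prop:Gauss_vanish1} together with the vanishings $G_v(1) = 0$ furnished by \cref{lem:G_v_at_1} under the hypothesis kill the residual forbidden terms.
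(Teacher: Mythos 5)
Your decomposition of the $\mu$-sum according to whether $\mu_v \in k\Z$, and the observation that the terms with $\mu_v \notin k\Z$ are holomorphic at $t_v=0$, match the paper's first step exactly. But the heart of the proposition is precisely the cancellation you defer: you assert that the inner sum over $\mu_v \in k\Z$ ``must annihilate every coefficient of $t_v^{j}$ with $j<\min\{0,\abs{\overline{v}}+2-\deg(v)\}$'' without supplying a mechanism, and this is the step the paper identifies as the main difficulty of the entire article. The paper's argument does not proceed coefficient-by-coefficient in the Laurent expansions of $G_v$ at the various $\zeta_{2M_v}^{\nu_v}$. Instead, writing $G_v(q)=(q^{2M_v}-1)^{2-\deg(v)}\sum_{\alpha_v\in\calS_v}\veps_v(\alpha_v)q^{2M_v\alpha_v}$, it observes that on the subsum $F_v(t)$ with $\mu_v\in k\Z$ the factor $(q^{2M_v}-1)^{2-\deg(v)}$ becomes $(e^{-t_v}-1)^{2-\deg(v)}$, independent of $\mu_v$; it then applies reciprocity (\cref{prop:reciprocity}) to the remaining Gauss sum in $\mu_v$ and shows, via \cref{lem:Gauss_sum_indep}, \cref{lem:epsilon}~\cref{item:lem:epsilon3} and \cref{lem:bij_calS_v}~\cref{item:lem:bij_calS_v3}, that the resulting Gauss sum is \emph{independent of} $\alpha_v$. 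It therefore factors out of the $\alpha_v$-sum, the whole subsum collapses to a constant multiple of $G_v(e^{-t_v/2M_v})$, and the required order $\abs{\overline{v}}+2-\deg(v)$ is exactly \cref{lem:G_v_at_1} (equivalently \cref{lem:FIMT}). Without this independence-of-$\alpha_v$ step, or some substitute for it, your outline contains no proof of the key vanishing.

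For \cref{item:prop:Gauss_vanish2} there are two further problems. First, knowing only $\ord_{t_v=0}F(f_1;t)\ge 0$ does not let you discard the $\mu_v\in k\Z$ terms at $t=0$: you need the stronger bound $\ord_{t_v=0}F_v(t)\ge\abs{\overline{v}}+2-\deg(v)>0$ on the grouped subsum itself, which is what the paper's proof of \cref{item:prop:Gauss_vanish1} actually establishes and then iterates vertex by vertex. Your statement of \cref{item:prop:Gauss_vanish1} alone is too weak for this deduction, and ``$G_v(1)=0$'' only controls the terms with $\nu_v\equiv 0$, not the genuinely singular ones with $\nu_v\not\equiv 0$. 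Second, your plan to apply reciprocity coordinate-wise so as to manufacture factors of $\sqrt{2k}$ reconciling $(2k)^{\abs{V_{\ge 2}}}$ with $\sqrt{2k}^{\abs{V_{\ge 2}}}$ is chasing a typo: \cref{lem:F(f;t)}, of which \cref{lem:F(f_1;t)} is the special case $f=f_1$, already carries the prefactor $\sqrt{2k}^{\abs{V_{\ge 2}}}$, and the restricted sum in \cref{eq:F(f_1;0)} is obtained from the full sum with the \emph{same} normalisation; inserting additional factors of $\sqrt{2k}$ would produce the wrong constant.
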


We obtain the following corollary by \cref{prop:Gauss_vanish} \cref{item:prop:Gauss_vanish1}. 

\begin{cor} \label{cor:Gauss_vanish}	
	Suppose $ \abs{\overline{v}} + 2 - \deg(v) \ge 0 $ for each vertex $ v \in V_{\ge 3} $.
	Let $ e \in \{ \pm 1 \}^{V_{\ge 3}} $.
	Then, $ c_{e, n} = 0 $ holds for $ n \in \Z^{V_{\ge 3}} $ such that $ n_v \le -1 $ for some vertex $ v \in V_{\ge 3} $.
	In particular, for a $ C^\infty $ function $ f \colon \R^{V_{\ge 2}} \to \bbC $ of rapid decay as $ x_v \to \infty $ for each $ v \in V_{\ge 2} $, 
	the limit $ \lim_{t \to +0}F_e(f; t) $ converges and it can be written as
	\[
	\lim_{t \to +0}F_e(f; t) 
	= 
	c_{e, 0}
	=
	e_v^{\deg (v) + \abs{\overline{v}}} F(0),
	\]
	which is independent of $ f $.
\end{cor}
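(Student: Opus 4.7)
\emph{Plan.} The plan is to deduce both claims from the order bound in Proposition~\ref{prop:Gauss_vanish}(\ref{item:prop:Gauss_vanish1}) by comparing two descriptions of $F(f_1; t)$. Under the hypothesis $|\overline{v}| + 2 - \deg(v) \ge 0$, one has $\min\{0, |\overline{v}| + 2 - \deg(v)\} = 0$ for every $v \in V_{\ge 3}$, so Proposition~\ref{prop:Gauss_vanish}(\ref{item:prop:Gauss_vanish1}) yields $\ord_{t_v = 0} F(f_1; t) \ge 0$ for each such $v$. Combined with the explicit rational expression in Lemma~\ref{lem:F(f_1;t)} --- where the $t$-dependence enters only through products $\prod_v G_v(\zeta_{2kM_v}^{\mu_v} e^{-t_v/2M_v})$ of functions each of a single variable $t_v$ --- this regularity along every coordinate hyperplane promotes to genuine holomorphy at the origin, so the multivariate Laurent expansion of $F(f_1; t)$ at $t = 0$ involves only non-negative powers of each $t_v$.

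Next I would apply Corollary~\ref{cor:asymp_F(f; t)} to $f = f_1$, obtaining the asymptotic expansion $F(f_1; t) \sim \sum_{n \in \Z^{V_{\ge 3}}} c_n f_1^{(n)}(0) \prod_v t_v^{n_v}$. A short calculation using the antiderivative convention of Lemma~\ref{lem:Euler--Maclaurin} for negative indices gives $f_1^{(n)}(0) = (-1)^{\sum_v n_v}$, which is nonzero for every $n \in \Z^{V_{\ge 3}}$. Since both expansions describe the same rational function in the formal scale $\{\prod_v t_v^{n_v}\}$, matching coefficients against the vanishing of negative-power Laurent terms established above forces $c_n = 0$ whenever some component $n_v \le -1$.

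For the second assertion, I would invoke the fact that the explicit formula for $c_n$ in Corollary~\ref{cor:asymp_F(f; t)} depends only on $P$, $\calS$, $\veps$, $Q$, and $k$, and not on the test function $f$. Therefore the vanishings just established hold for arbitrary $f$ of rapid decay, and the asymptotic expansion of $F(f; t)$ reduces to $\sum_{n \in \Z_{\ge 0}^{V_{\ge 3}}} c_n f^{(n)}(0) \prod_v t_v^{n_v}$. Taking $t \to +0$ leaves only the $n = 0$ term, establishing convergence and identifying the limit with the $f$-independent value $c_0 f(0) = c_0$ (the cases of interest having $f(0) = 1$).

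\emph{Main obstacle.} The subtle point is justifying that the multivariate asymptotic expansion really coincides with the convergent Laurent expansion of the rational function $F(f_1; t)$, since in several variables regularity along each coordinate hyperplane does not by itself imply regularity at the origin. This is precisely where the single-variable factored structure of the summands in Lemma~\ref{lem:F(f_1;t)} is essential: the one-variable Laurent expansions in each $t_v$ combine unambiguously into the multivariate one, so the comparison of coefficients with the $\prod_v t_v^{n_v}$ from the asymptotic expansion is clean. All of the genuinely analytic difficulty is thereby absorbed into Proposition~\ref{prop:Gauss_vanish}, whose proof is the truly delicate part of the argument.
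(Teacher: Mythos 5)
Your proposal is correct and takes essentially the same route as the paper, which obtains this corollary directly from \cref{prop:Gauss_vanish}~\cref{item:prop:Gauss_vanish1}: under the hypothesis the order bound gives $\ord_{t_v=0}F(f_1;t)\ge 0$ for every $v\in V_{\ge 3}$, and comparing the resulting regular Laurent expansion of the rational function $F(f_1;t)$ with the asymptotic expansion of \cref{cor:asymp_F(f; t)} (using $f_1^{(n)}(0)=(-1)^{n_1+\cdots}\neq 0$ and the $f$-independence of $c_n$) forces $c_n=0$ whenever some $n_v\le -1$. Your extra care about the multivariate coefficient matching and the normalisation $f(0)=1$ only spells out details the paper leaves implicit.
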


We obtain the following proposition by combining \cref{prop:Gauss_vanish} \cref{item:prop:Gauss_vanish2,prop:WRT_expression}.

\begin{prop} \label{prop:WRT_F(f_1;t)}
	If $ \abs{\overline{v}} + 2 - \deg(v) > 0 $ for any vertices $ v \in V_{\ge 3} $, then it holds that
	\begin{equation}
		\WRT_k(M(\Gamma))
		= 
		\frac{(-1)^{\abs{V_1}} \zeta_{4k}^{-\sum_{v \in V} (w_{v} + 3) - \sum_{i \in V_1} 1/w_i}}
		{2(\zeta_{2k} - \zeta_{2k}^{-1})}
		F(0) .
	\end{equation}
\end{prop}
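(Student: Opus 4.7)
The plan is to compare the two formulas that are already in hand, namely the closed expression for $\WRT_k(M(\Gamma))$ established in \cref{prop:WRT_expression} and the explicit evaluation of $F(f_1;0)$ given in \cref{prop:Gauss_vanish} \cref{item:prop:Gauss_vanish2}. Both statements produce exactly the same sum
\[
\sum_{\mu \in (\Z \smallsetminus k\Z)^{V_{\ge 2}}/2kS(\Z^{V_{\ge 2}})}
\bm{e}\!\left( -\frac{1}{4k} {}^t\!\mu S^{-1}\mu \right)
\prod_{v \in V_{\ge 2}} G_v\!\left( \zeta_{2kM_v}^{\mu_v} \right),
\]
only up to an overall prefactor, so it will be enough to read off the two prefactors and to solve the resulting linear relation for $\WRT_k(M(\Gamma))$.

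First I would check that the hypothesis of the proposition, namely $\abs{\overline{v}} + 2 - \deg(v) > 0$ for every $v \in V_{\ge 3}$, is precisely the one required to invoke \cref{prop:Gauss_vanish} \cref{item:prop:Gauss_vanish2}; this is immediate from the way the two statements are phrased. Next I would compare the constant factors. From \cref{prop:WRT_expression} the prefactor of the sum is
\[
\frac{(-1)^{\abs{V_1}} \bm{e}(\abs{V_{\ge 2}}/8)\, \zeta_{4k}^{-\sum_{v \in V} (w_{v}+3) - \sum_{i \in V_1} 1/w_i}}
{2\sqrt{2k}^{\abs{V_{\ge 2}}} (\zeta_{2k} - \zeta_{2k}^{-1}) \prod_{i \in V_1} \sqrt{\abs{w_i}}},
\]
whereas \cref{prop:Gauss_vanish} \cref{item:prop:Gauss_vanish2} packages the same sum with prefactor
\[
\frac{\bm{e}(\abs{V_{\ge 2}}/8)}{\sqrt{2k}^{\abs{V_{\ge 2}}} \prod_{i \in V_1} \sqrt{\abs{w_i}}}.
\]
Dividing the former by the latter yields exactly the constant
\[
\frac{(-1)^{\abs{V_1}}\, \zeta_{4k}^{-\sum_{v \in V} (w_{v}+3) - \sum_{i \in V_1} 1/w_i}}{2(\zeta_{2k}-\zeta_{2k}^{-1})},
\]
so substituting $F(f_1;0)$ for the summation part in \cref{prop:WRT_expression} produces precisely the claimed identity.

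There is essentially no obstacle in this step: all the real work is hidden in \cref{prop:WRT_expression} (the Gauss-sum reciprocity manipulation of the WRT expression of Gukov--Pei--Putrov--Vafa) and in \cref{prop:Gauss_vanish} \cref{item:prop:Gauss_vanish2} (the holomorphy of $F(f_1;t)$ and its evaluation at $t=0$), both of which are treated separately in the paper. The only minor point worth spelling out is that the summation domains agree: in both formulas the sum runs over $\mu \in (\Z\smallsetminus k\Z)^{V_{\ge 2}}/2kS(\Z^{V_{\ge 2}})$, so no re-indexing is necessary. Thus the proposition follows by a direct substitution, which I would present as a one-paragraph derivation ending with the boxed equality.
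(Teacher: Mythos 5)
Your proposal is correct and matches the paper's own argument: the paper simply states that \cref{prop:WRT_F(f_1;t)} follows by combining \cref{prop:WRT_expression} with \cref{prop:Gauss_vanish} \cref{item:prop:Gauss_vanish2}, which is exactly the prefactor comparison you carry out. Your version just spells out the cancellation of $\bm{e}(\abs{V_{\ge 2}}/8)/\bigl(\sqrt{2k}^{\abs{V_{\ge 2}}}\prod_{i\in V_1}\sqrt{\abs{w_i}}\bigr)$ and the agreement of the summation domains explicitly.
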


We obtain our main result by combining \cref{prop:HB_lim,cor:Gauss_vanish,prop:WRT_F(f_1;t)}.

\begin{cor}[\cref{thm:main}] \label{cor:HB_WRT}
	If $ \abs{\overline{v}} + 2 - \deg(v) > 0 $ for any vertices $ v \in V_{\ge 3} $, then it holds that
	\[
	\WRT_k(M(\Gamma))
	=
	\frac{1}{2(\zeta_{2k} - \zeta_{2k}^{-1})}
	\lim_{q \to \zeta_k} \widehat{Z}_\Gamma (q).
	\]
\end{cor}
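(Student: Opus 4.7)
My plan is to derive the corollary as an essentially mechanical consequence of the two preceding propositions \ref{prop:HB_lim} and \ref{prop:WRT_F(f_1;t)}, by funneling both sides of the desired identity through the common quantity $c_0$ (equivalently, through the universal limit $\lim_{t \to +0} F(f; t)$ guaranteed by \cref{cor:Gauss_vanish}).

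First I would observe that the hypothesis $\abs{\overline{v}} + 2 - \deg(v) > 0$ for every $v \in V$ implies in particular $\abs{\overline{v}} + 2 - \deg(v) \ge 0$ for every $v \in V_{\ge 3}$, which is precisely the hypothesis needed to invoke \cref{cor:Gauss_vanish}. That corollary states that $c_n = 0$ whenever $n \in \Z^{V_{\ge 3}}$ has some non-positive coordinate, and consequently that $\lim_{t \to +0} F(f; t) = c_0$ for \emph{any} Schwartz-type test function $f \colon \R^{V_{\ge 2}} \to \bbC$, with $c_0$ not depending on $f$. Applied to the two specific test functions $f_1$ and $f_2$ introduced in \cref{subsec:asymp_Gauss}, this yields the crucial identity
\[
F(f_1; 0) \;=\; c_0 \;=\; \lim_{t \to +0} F(f_2; t).
\]

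Next I would feed each side of this equality into one of the two propositions at hand. The right-hand side, combined with \cref{prop:HB_lim}, guarantees that the radial limit $\lim_{q \to \zeta_k} \widehat{Z}_\Gamma(q)$ exists and equals $(-1)^{\abs{V_1}} \zeta_{4k}^{-\sum_{v \in V}(w_v + 3) - \sum_{i \in V_1} 1/w_i} \cdot c_0$. The left-hand side, via \cref{prop:WRT_F(f_1;t)} (whose hypothesis is exactly the stronger strict inequality we are assuming), gives
\[
\WRT_k(M(\Gamma)) \;=\; \frac{(-1)^{\abs{V_1}} \zeta_{4k}^{-\sum_{v \in V} (w_v + 3) - \sum_{i \in V_1} 1/w_i}}{2(\zeta_{2k} - \zeta_{2k}^{-1})} \cdot c_0.
\]
Eliminating $c_0$ between these two expressions, the phase factor $(-1)^{\abs{V_1}} \zeta_{4k}^{-\sum_{v \in V}(w_v+3) - \sum_{i \in V_1} 1/w_i}$ cancels exactly, and one reads off the asserted equality $\WRT_k(M(\Gamma)) = \frac{1}{2(\zeta_{2k}-\zeta_{2k}^{-1})} \lim_{q \to \zeta_k} \widehat{Z}_\Gamma(q)$.

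There is essentially no obstacle at this stage: all the genuine mathematical content, namely the vanishing of the negative-degree Gauss-sum coefficients $c_n$ (i.e.\ the holomorphy of $F(f_1; t)$ at $t = 0$), is concentrated in \cref{prop:Gauss_vanish} and has already been established. The only care required in writing up this final step is bookkeeping of the $f_1$ versus $f_2$ distinction, to confirm that both sides really pass through the same $c_0$; this is what \cref{cor:Gauss_vanish} was engineered to provide, since $c_0$ depends only on the coefficient data of the asymptotic expansion \eqref{eq:asymp_F(f; t)} and not on the choice of $f$.
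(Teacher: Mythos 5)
Your proposal is correct and follows the paper's own route exactly: the paper likewise obtains the corollary by combining \cref{prop:HB_lim} and \cref{prop:WRT_F(f_1;t)}, with the identification $\lim_{t\to+0}F(f_2;t)=c_0=F(f_1;0)$ supplied by \cref{cor:Gauss_vanish}. Your write-up merely makes explicit the $f_1$ versus $f_2$ bookkeeping and the cancellation of the common phase factor, which the paper leaves implicit.
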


% --------------------------------------------------------------------------

\subsection{A proof of the main result} \label{subsec:Gauss_vanish_proof}

% --------------------------------------------------------------------------

Finally, we prove \cref{prop:Gauss_vanish}. 
We need the following lemma.

\begin{lem} \label{lem:Gauss_sum_indep}
	Let $ k, M \in \Z \smallsetminus \{ 0 \} $ and $ a, b \in \Z $ be integers such that $ \gcd(a, M) = 1 $.
	For $ \alpha \in \frac{1}{2M}\Z $ such that $ \gcd(2M \alpha, M) = 1 $, the complex number 
	\[
	\sum_{\mu \in \Z/k\Z + \alpha} \bm{e} \left( \frac{M}{k} \left( a \mu^2 + b \mu \right) \right)
	\]
	depends only on $ M\alpha^2, M\alpha \bmod \Z $. 
\end{lem}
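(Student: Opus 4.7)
My plan is to substitute $\mu = \alpha + n$ with $n$ ranging over a transversal of $\Z/k\Z$ and expand the quadratic form in $\mu$. Writing $m := 2M\alpha \in \Z$ (an integer with $\gcd(m, M) = 1$ by hypothesis), this yields
\[
\sum_{\mu \in \Z/k\Z + \alpha}
\bm{e}\Bigl(\tfrac{M(a\mu^{2} + b\mu)}{k}\Bigr)
=
\bm{e}\Bigl(\tfrac{am^{2} + 2bMm}{4Mk}\Bigr)
\sum_{n \in \Z/k\Z}
\bm{e}\Bigl(\tfrac{aMn^{2} + (am + bM)n}{k}\Bigr),
\]
separating an outer prefactor from an inner quadratic Gauss sum. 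The invariants translate as $M\alpha \bmod \Z = m/2 \bmod \Z$ (determined by $m \bmod 2$) and $M\alpha^{2} \bmod \Z = m^{2}/(4M) \bmod \Z$ (determined by $m^{2} \bmod 4M$), so the task reduces to showing that the right-hand side depends on $m$ only through $(m \bmod 2, m^{2} \bmod 4M)$.

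I would next characterise when two values $m, m'$ coprime to $M$ give the same invariants: namely, $m' = m + 2r$ with $r \in \Z$ satisfying $r(m + r) \equiv 0 \pmod{M}$. Using $\gcd(m, M) = 1$ and the Chinese Remainder Theorem, for each prime power $p^{e} \Vert M$ exactly one of $p^{e} \mid r$ or $p^{e} \mid m + r$ holds, so the orbit of $m$ under this equivalence has size $2^{\omega(M)}$ and is generated by \emph{elementary} shifts that flip one prime factor of $M$ at a time. It therefore suffices to verify invariance of the sum under a single elementary shift $m \mapsto m' = m + 2r$.

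For such a shift, the main step is to construct a bijection $\sigma \colon \alpha + \Z/k\Z \to \alpha' + \Z/k\Z$ matching summands termwise. I would try the affine reflection $\sigma(\mu) = c - \mu$ for a rational $c$ chosen so that $c \equiv \alpha + \alpha' \pmod{\Z}$ (so that $\sigma$ maps one coset to the other) and so that the difference $M(a\sigma(\mu)^{2} + b\sigma(\mu))/k - M(a\mu^{2} + b\mu)/k = M(ac + b)(c - 2\mu)/k$ lies in $\Z$ for every $\mu$ in the coset. The latter requirement splits, via separating the constant and $n$-linear parts, into the two congruences
\[
k \mid 2M(ac + b)
\qquad\text{and}\qquad
k \mid M(ac + b)(c - 2\alpha),
\]
which are to be solved for a common representative $c$ of $\alpha + \alpha' \bmod \Z$. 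The hypothesis $\gcd(a, M) = 1$, combined with the elementary form of $r$ (so that $M(c - 2\alpha) = r + Mv$ with $v \in \Z$ free), provides the arithmetic flexibility needed.

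The main obstacle I foresee is the joint solvability of these two congruences when $\gcd(2aM, k)$ has nontrivial common factors with $a$, $M$, and $k$ simultaneously. Handling this will require a careful prime-by-prime dissection of $k$ into its common part with $M$ and its prime-to-$M$ part, using the relation $r(m + r) \equiv 0 \pmod{M}$ together with $\gcd(m, M) = 1$; if the reflection alone does not suffice, one can compose it with a further affine substitution $n \mapsto n + t$ inside the inner Gauss sum, chosen by a separate linear congruence to absorb the remaining discrepancy. This combinatorial case analysis is the technical heart of the argument.
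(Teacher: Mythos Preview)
Your proposal leaves the decisive step open. You correctly expand the sum and reduce to comparing two integers $m$ and $m' = m+2r$ with $r(m+r) \equiv 0 \pmod M$, and you set up an affine reflection $\mu \mapsto c - \mu$; but the solvability of the pair of congruences $k \mid 2M(ac+b)$ and $k \mid M(ac+b)(c-2\alpha)$ is precisely where the arithmetic content lies, and you do not establish it --- you only announce a ``prime-by-prime dissection''. Adding a translation $n \mapsto n+t$ in the inner sum gives no new freedom: it just replaces $c$ by $c \pm t$, which is already allowed.

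The paper's proof is much shorter and avoids this obstacle by splitting on $\gcd(M,k)$. If $\gcd(M,k) > 1$, then in your own inner sum
\[
\sum_{n \bmod k} \bm{e}\bigl((aMn^{2} + (am+bM)n)/k\bigr)
\]
one has $\gcd(aM,k) \nmid am+bM$ (using $\gcd(a,M) = \gcd(m,M) = 1$), so this Gauss sum vanishes for every admissible $\alpha$ and the claim is trivial. If $\gcd(M,k) = 1$, pick $M^{*}$ with $MM^{*} \equiv 1 \pmod k$; a two-line calculation gives
\[
\sum_{\mu \in \Z/k\Z+\alpha}\bm{e}\Bigl(\tfrac{M}{k}(a\mu^{2}+b\mu)\Bigr)
= \bm{e}\Bigl(\tfrac{1-MM^{*}}{k}\bigl((1+MM^{*})a\cdot M\alpha^{2}+b\cdot M\alpha\bigr)\Bigr)
\sum_{\mu \in \Z/k\Z+MM^{*}\alpha}\bm{e}\Bigl(\tfrac{M}{k}(a\mu^{2}+b\mu)\Bigr),
\]
whose prefactor visibly depends only on $M\alpha^{2}, M\alpha \bmod \Z$, while the residual sum depends only on $MM^{*}\alpha \bmod \Z = M^{*}\cdot(M\alpha) \bmod \Z$, hence only on $M\alpha \bmod \Z$. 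No termwise bijection is constructed in either case. If you were to carry out your promised case analysis at primes dividing $\gcd(M,k)$, you would in fact be forced to rediscover the vanishing argument of the first case anyway.
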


This lemma is a little generalisation of \cite[Lemma 4.7 and 4.8]{MM} which is based on the proof of \cite[Theorem 4.1]{BMM_high_depth}.
In this lemma, we remove the assumption $ \gcd(2M, 2M \alpha) = 1 $ in \cite[Lemma 4.7 and 4.8]{MM}.
Although our proof is essentially the same as in \cite[Lemma 4.7 and 4.8]{MM}, we give a proof for the convenience of readers.

\begin{proof}
	\textbf{Case 1}: Suppose $ \gcd(M, k) > 1 $.
	We can write
	\[
	\sum_{\mu \in \Z/k\Z + \alpha} \bm{e} \left( \frac{M}{k} \left( a \mu^2 + b \mu \right) \right)
	=
	\bm{e} \left( \frac{M}{k} \left( a \alpha^2 + b \alpha \right) \right)
	\sum_{\mu \in \Z/k\Z} \bm{e} \left( \frac{1}{k} \left( Ma \mu^2 + (2M \alpha a + Mb) \mu \right) \right).
	\]
	By the assumption, we have $ \gcd(a, M) = 1, \, \gcd(2M \alpha, M) = 1 $ and thus, $ \gcd(2M \alpha a + Mb, M) = 1 $ holds.
	By combining with $ \gcd(M, k) \mid \gcd(Ma, k) $ and $ 1 < \gcd(M, k) \mid M $, we obtain $ \gcd(Ma, k) \nmid 2M \alpha a + Mb $. 
	Thus, it is $ 0 $ by the well-known vanishing result of one-variable Gauss sums (\cite[Lemma 4.3]{MM}).
	
	\textbf{Case 2}: Suppose $ \gcd(M, k) = 1 $.
	Let $ M^* \in \Z $ be an integer such that $ MM^* \equiv 1 \bmod k $.
	For any $ \mu \in \Z $, we have
	\begin{align}
		&M \left( a (\mu + \alpha)^2 + b (\mu + \alpha) \right)
		- M \left( a (\mu + M M^* \alpha)^2 + b (\mu + M M^* \alpha) \right) & & \\
		= \,
		&M \left( (1 - (MM^*)^2) a \alpha^2 + (1 - M M^*) (b + 2a \mu) \alpha \right) & & \\
		\equiv \,
		&(1 - M M^*) \left( (1 + MM^*) a \alpha^2 + b \alpha \right) & &\bmod k\Z.
	\end{align}
	Thus, we obtain
	\begin{align}
		&\sum_{\mu \in \Z/k\Z + \alpha} \bm{e} \left( \frac{M}{k} \left( a \mu^2 + b \mu \right) \right)\\
		= \,
		&\bm{e} \left( \frac{1 - M M^*}{k}  M  \left( (1 + MM^*) a \alpha^2 + b \alpha \right) \right)
		\sum_{m \in \Z/k\Z + M M^* \alpha} \bm{e}\left( \frac{M}{k} \left( a \mu^2 + b \mu \right) \right).
	\end{align}
	Since this sum depends only on $ M\alpha^2 \bmod \Z $ and $ M\alpha \bmod \Z $, we obtain the claim.
\end{proof}

\begin{proof}[Proof of $ \cref{prop:Gauss_vanish} $]
	For \cref{item:prop:Gauss_vanish1}, fix a vertex $ v \in V_{\ge 3} $ and let
	\begin{align}
		F(t) 
		&\coloneqq
		\left( 2k \bm{e} \left( -1/8 \right) \right)^{\abs{V_{\ge 2}}} \sqrt{\det S} F(f_1; t)
		\\
		F_v(t)
		&\coloneqq 
		\sum_{ \mu \in \left. \left( \Z^{V_{\ge 2} \smallsetminus \{ v \} } \oplus k\Z \right) \middle/ 2kS \left(\Z^{V_{\ge 2}} \right) \right. } 
		\bm{e} \left( -\frac{1}{4k} {}^t\!\mu S^{-1} \mu \right) 
		\prod_{v' \in V_{\ge 2}} G_{v'} \left( \zeta_{2kM_{v'}}^{\mu_{v'}} e^{-t_{v'}/2M_{v'}} \right).
	\end{align}
	By \cref{lem:F(f_1;t)}, we can write
	\begin{align}
		&F(t) - F_v(t) \\
		= \,
		&\sum_{ \mu \in \left. \left( \Z^{V_{\ge 2} \smallsetminus \{ v \} } \oplus \left( \Z \smallsetminus k\Z \right) \right) \middle/ 2kS \left(\Z^{V_{\ge 2}} \right) \right. } 
		\bm{e} \left( -\frac{1}{4k} {}^t\!\mu S^{-1} \mu \right) 
		\prod_{v' \in V_{\ge 2}} G_{v'} \left( \zeta_{2kM_{v'}}^{\mu_{v'}} e^{-t_{v'}/2M_{v'}} \right).
	\end{align}
	Since the rational function
	\[
	G_v (\zeta_{2k}^{\mu_v} e^{-t_v/2}) = 
	(\zeta_{2k}^{\mu_v} e^{-t_v/2} - \zeta_{2k}^{-\mu_v} e^{t_v/2})^{2 - \deg(v)} 
	\prod_{i \in \overline{v}} 
	\left(
	\left( \zeta_{2k}^{\mu_v} e^{-t_v/2} \right)^{M_{v} / w_{i}} - \left( \zeta_{2k}^{\mu_v} e^{-t_v/2} \right)^{-M_{v} / w_{i}} 
	\right)
	\]
	is holomorphic at $ t_v = 0 $ for any $ \mu_v \in \Z \smallsetminus k\Z $, the rational function $ F(t) - F_v(t) $ is also holomorphic at $ t_v = 0 $.
	Thus, it suffices to show 
	$ \ord_{t_v = 0} F_v(t)
	\ge
	\abs{\overline{v}} + 2 - \deg(v) $.
	Let 
	\[
	a_v \coloneqq M_v \left( -w_v + \sum_{i \in \overline{v}} \frac{1}{w_i} \right), \quad
	b_v \coloneqq \sum_{v' \in V_{\ge 3}, \, \{ v, v' \} \in E} \mu_{v'}
	\]
	for each $ \mu' \in \Z^{V_{\ge 2} \smallsetminus \{ v \} } $.
	Under this notations, we can write
	\begin{equation} \label{eq:F_v_expression}
		\begin{aligned}
			F_v(t)
			\coloneqq \, 
			&\sum_{ \mu' \in \bigoplus_{v' \in V_{\ge 2} \smallsetminus \{ v \} } \Z / 2k M_{v'} \Z } 
			\bm{e} \left( -\frac{1}{4k} {}^t\!\mu' S^{-1} \mu' \right) 
			\left( 
			\prod_{ v \in V_{\ge 2} \smallsetminus \{ v \} } G_{v'} \left( \zeta_{2kM_{v'}}^{\mu_{v'}} e^{-t_{v'}/2M_{v'}} \right) 
			\right) \\
			&\sum_{ \mu_v \in \Z / 2 M_{v} \Z } 
			\bm{e} \left( -\frac{1}{4k} \left( \frac{k a'_v}{M_v} \mu_v^2 + 2b_v \mu_v \right) \right) 
			G_v \left( \zeta_{2M_v}^{\mu_v} e^{-t_v/2M_v} \right).
		\end{aligned}
	\end{equation}
	Since we have 
	\[
	G_{v} (q)
	=
	(q^{2M_{v}} - 1)^{2 - \deg(v)}
	\sum_{\alpha_{v} \in \calS_{v}} \veps_{v}(\alpha_{v}) q^{2M_{v} \alpha_{v}}
	\]
	by the definition of $ G_v(q) $, the last line of \cref{eq:F_v_expression} is equal to
	\begin{align}
		&(e^{-t_v} - 1)^{2 - \deg(v)}
		\sum_{\alpha_{v} \in \calS_{v}} \veps_{v}(\alpha_{v}) e^{-\alpha_{v} t_v}
		\sum_{ \mu_v \in \Z / 2 M_{v} \Z } 
		\bm{e} \left( -\frac{k a'_v}{4 M_v} \mu_v^2 + \left( \alpha_v - \frac{b_v}{2} \right) \mu_v \right).
	\end{align}
	By applying reciprocity of Gauss sums (\cref{prop:reciprocity}), this can be written as
	\begin{align} \label{eq:alpha_indep}
		&\frac{\bm{e} (1/8) \sqrt{k a_v}}{\sqrt{2 \abs{M_v}}}
		(e^{-t_v} - 1)^{2 - \deg(v)}
		\sum_{\alpha_{v} \in \calS_{v}} \veps_{v}(\alpha_{v}) e^{-\alpha_{v} t_v}
		\sum_{ \mu_v \in \Z / k a_v \Z } 
		\bm{e} \left( -\frac{M_v}{k a'_v} \left( \mu_v + \alpha_v - \frac{b_v}{2} \right)^2 \right).
	\end{align}
	Since the second summation is independent of $ \alpha_v $ by \cref{lem:epsilon} \cref{item:lem:epsilon3}, \cref{lem:bij_calS_v} \cref{item:lem:bij_calS_v3}, and \cref{lem:Gauss_sum_indep}, \cref{eq:alpha_indep} is the multiplication of $ G_v(e^{-t_v / 2M_v}) $ by a constant independent of $ \alpha_v $.
	Thus, we obtain $ \ord_{t_v = 0} F_v(t) \ge \ord_{t_v = 0} G_v(e^{-t_v / 2M_v}) $.
	Since $ \ord_{t_v = 0} G_v(e^{-t_v / 2M_v}) = 2 - \deg(v) + \abs{\overline{v}} $ holds by \cref{lem:G_v_at_1}, the claim holds.
	
	Finally we prove \cref{item:prop:Gauss_vanish2}.
	If $ \abs{\overline{v}} + 2 - \deg(v) > 0 $ for each vertex $ v \in V_{\ge 3} $, then $ G_v(e^{-t_v / 2M_v}) $ has a zero at $ t_v = 0 $ by \cref{lem:G_v_at_1}, thus $ F_v(t) $ also has a zero.
	Thus, we obtain
	\[
	\restrict{F(t)}{t_v = 0}
	=
	\restrict{F(t) - F_v(t)}{t_v = 0}.
	\]
	Fix any vertex $ v' \in V_{\ge 2} \smallsetminus \{ v \} $ and let
	\begin{align}
		F_{v, v'}(t)
		\coloneqq 
		F(t) - F_v(t) -
		\sum_{ \mu \in 
			\left. \left( \Z^{V_{\ge 2} \smallsetminus \{ v, v' \} } \oplus \left( \Z \smallsetminus k\Z \right)^{\{ v, v' \}} \right) 
			\middle/ 2kS \left(\Z^{V_{\ge 2}} \right) \right. } 
		&\bm{e} \left( -\frac{1}{4k} {}^t\!\mu S^{-1} \mu \right) 
		\\
		&\prod_{v' \in V_{\ge 2}} G_{v'} \left( \zeta_{2kM_{v'}}^{\mu_{v'}} e^{-t_{v'}/2M_{v'}} \right).
	\end{align}
	By the same argument in a proof of \cref{item:prop:Gauss_vanish1}, the rational function $ F_{v, v'}(t) $ has a zero at $ t_{v'} = 0 $.
	By induction, we obtain the claim.
\end{proof}

% --------------------------------------------------------------------------
%		参考文献
% --------------------------------------------------------------------------

\bibliographystyle{alpha}
\bibliography{quantum_inv_plumbed_work}

% --------------------------------------------------------------------------
\end{document}